\DeclareMathOperator{\CBm}{CBm}
\title{Continuous first order logic and local stability}
\author{Ita\"\i{} \textsc{Ben Yaacov}}
\address{Ita\"\i{} \textsc{Ben Yaacov} \\
  Universit\'e de Lyon \\
  Universit\'e Lyon 1 \\
  Institut Camille Jordan, UMR 5208 CNRS\\
  43 boulevard du 11 novembre 1918 \\
  F-69622 Villeurbanne Cedex \\
  France}
\urladdr{http\string://math.univ-lyon1.fr/\textasciitilde begnac/}
\thanks{Research of the first author supported by NSF grant DMS-0500172}
\author{Alexander Usvyatsov}
\address{Alexander Usvyatsov \\
  UCLA Mathematics Department \\
  Box 951555 \\
  Los Angeles, CA 90095-1555 \\
  USA}
\urladdr{http://www.math.ucla.edu/\textasciitilde alexus}
\thanks{The authors would like to thank C.\ Ward Henson for stimulating
  discussions, and Sylvia Carlisle and Eric
  Owiesny for a careful reading of the manuscript}
\date{\today}
\keywords{continuous logic, metric structures, stability, local stability}
\subjclass[2000]{03C90,03C45}
\begin{document}

\begin{abstract}
  We develop continuous first order logic, a variant of the logic
  described in \cite{Chang-Keisler:ContinuousModelTheory}.
  We show that this logic has the same power of expression as the
  framework of open Hausdorff cats, and as such extends Henson's logic
  for Banach space structures.
  We conclude with the development of local stability, for which this
  logic is particularly well-suited.
\end{abstract}

\maketitle

\section*{Introduction}

A common trend in modern model theory is to generalise
model-theoretic notions and tools to frameworks that go beyond that of
first order logic and elementary classes and properties.
In doing this, there is usually a trade-off: the more general the
framework, the weaker the available tools, and one finds oneself many
times trying to play this trade-off, looking for the most general
framework in which a specific argument can be carried through.
The authors admit having committed this sin not once.

The present paper is somewhat different, though: we do present what
seems to be a new framework, or more precisely, a new logic, but in
fact we prove that it is completely equivalent to one that has been
previously defined elsewhere, namely that of (metric) open Hausdorff
cats (see \cite{BenYaacov:Morley}).

Another logic dealing with metric structures is Henson's logic of
positive bounded formulae and approximate satisfaction (see
for example \cite{Henson-Iovino:Ultraproducts}).
Even though Henson's logic was formulated for unbounded Banach space
structures while ours deal with bounded metric structures, it is fair
to say that the two logics are equivalent.
First of all, Henson's
approach makes perfect sense in the bounded setting in which case the
two logics are indeed equivalent.
Banach space structures can (in most cases) be reduced for
model-theoretic purposes to their closed unit ball (see
\fref{exm:BanachUnitBall}).
Moreover, there exists
an \emph{unbounded} variant of
continuous logic which is equivalent with to (a somewhat extended)
Henson's logic for arbitrary (bounded or unbounded) metric structures.
It can be reduced back to continuous logic as studied here (i.e.,
bounded) via the addition of a single point at infinity.
This beyond the scope of the present paper and is discussed in detail
in \cite{BenYaacov:Perturbations}.

Finally, this logic is \emph{almost} a special case of the
continuous first order logic that Chang and Keisler studied in
\cite{Chang-Keisler:ContinuousModelTheory}.
We do differ with their definitions on several
crucial points, where we find they were too general, or not general
enough.
Our logic is a special case in that
instead of allowing any compact Hausdorff space $X$ as a set of
truth values, we find that letting $X$ be the unit interval $[0,1]$
alone is enough.
Indeed, as every compact Hausdorff space embeds into a power of the
interval, there is no loss of generality.
Similarly,
as unit interval admits a natural complete linear ordering,
we may eliminate the plethora of quantifiers present in
\cite{Chang-Keisler:ContinuousModelTheory}, and the arbitrary choices involved, in favour of two
canonical quantifiers, $\sup$ and $\inf$, which are simply the
manifestations in this setting of the classical quantifiers $\forall$ and
$\exists$.
On the other hand, extending Chang and Keisler, we allow the ``equality
symbol'' to take any truth value in $[0,1]$.
Thus, from an \emph{equality} symbol it becomes a \emph{distance}
symbol, allowing us to interpret metric structures
in the modified logic.

However, continuous first order logic has significant advantages over
previous formalisms for metric structures.
To begin with, it is an immediate
generalisation of classical first order logic, more natural and
less technically involved than previous formalisms.
More importantly, it allows us to beat the above-mentioned
trade-off.
Of course, if two logics have the same power of expression, and only
differ in presentation, then an
argument can be carried in one if and only if it can be carried in the
other; but it may still happen that notions which arise naturally from
one of the presentations are more useful, and render clear and obvious
what was obscure with the other one.
This indeed seems to be the case with continuous first order logic,
which further supports our contention that it is indeed the ``true and
correct'' generalisation of classical first order logic to the context
of metric structures arising in analysis.

An example for this, which was part of the original motivation towards
these ideas, is a question by C.\ Ward
Henson, which can be roughly stated as ``how
does one generalise local (i.e., formula-by-formula) stability theory
to the logic of positive bounded formulae?''.
The short answer, as far as we can see, is ``one doesn't.''
The long answer is that positive bounded formulae may not be the
correct analogues of first order formulae for these purposes, whereas
continuous first order formulae are.

\medskip

In \fref{sec:syntax} we define the syntax of continuous first
order logic: signatures, connectives, quantifiers, formulae and
conditions.

In \fref{sec:struct} we define the semantics: pre-structures,
structures, the special role of the metric and truth values.

In \fref{sec:TypesDefPred} we discuss types and definable
predicates.
The family of definable predicates is the completion, in some natural
sense, of the family of continuous formulae.

In \fref{sec:Theories} we discuss continuous first order
theories and some basic properties such as quantifier elimination.
We also compare continuous first order theories with previous notions
such as open Hausdorff cats.

In \fref{sec:Img} we discuss imaginaries as canonical
parameters to formulae and definable predicates.

In \fref{sec:LocTyp} we define $\varphi$-types, i.e., types which only
depend on values of instances of a formula $\varphi$.

In \fref{sec:LocStab} we develop
local stability, answering Henson's question.

In \fref{sec:GlobStab} we show how to deduce the standard global
theory of independence from the local one in a stable theory.

We also have two appendices:

\fref{apdx:InvContMod} contains a remark concerning
an alternative (and useful) presentation of continuity moduli.

\fref{apdx:ModStab} deals with the case of a formula which is
stable in a single model of a theory.

\section{Continuous first order formulae}
\label{sec:syntax}

In classical (first order) logic there are two possible truth values:
``true'', sometimes denoted by $\top$ or $T$, and ``false'',
denoted by $\bot$ or $F$.
Often enough one associates the classical truth values with numerical
values, and the most common choice is probably to assign $T$ the value
$1$ and $F$ the value $0$.
This assignment is not sacred, however, and for our purposes the
opposite assignment, i.e., $T = 0$ and $F = 1$, fits more elegantly.

The basic idea of this paper is to repeat the development of first
order logic with one tiny difference: we replace the \emph{finite}
set of truth values $\{0,1\}$ with the \emph{compact} set $[0,1]$.
Everything else should follow naturally from this modification.
We will refer to the classical framework also as \emph{discrete
logic}, whereas the one we develop here will be referred to as
\emph{continuous logic}.

As in classical logic, a \emph{continuous signature} $\cL$ is a set of
function symbols and predicate symbols, each of which having an
associated arity $n < \omega$.
In an actual continuous structure, the function symbols will be
interpreted as functions from the structure into itself, and the
predicate symbols as
functions to the set of truth values, i.e., the interval $[0,1]$.

For the definition of pure syntax we may restrict ourselves to
non-metric signatures, which are the analogues of classical signatures
without equality.

\begin{dfn}
  A \emph{non-metric continuous signature} consists of a set of
  function symbols and predicate symbols, and for each function symbol
  $f$ or predicate symbol $P$, its arity $n_f < \omega$ or $n_P < \omega$.

  We may also consider multi-sorted signatures, in which case the
  arity of each symbol specify how many arguments are in each sort,
  such that the total is finite, and each function symbol has a target
  sort.
\end{dfn}

Given a continuous signature $\cL$, we define $\cL$-terms and atomic
$\cL$-formulae as usual.
However, since the truth values of
predicates are going to be in $[0,1]$, rather than in $\{0,1\}$, we
need to adapt our connectives and quantifiers accordingly.

Let us start with connectives.
In the discrete setting we use a somewhat fixed set of unary and
binary Boolean connectives, from which we can construct any $n$-ary
Boolean expression.
In other words, any mapping from $\{0,1\}^n \to \{0,1\}$ can be written
using these connectives (otherwise, we would have introduced
additional ones).
By analogy, an $n$-ary continuous connective should be a
\emph{continuous} mapping from $[0,1]^n \to [0,1]$, and we would like to
have a set of connectives with which we can construct every continuous
mapping $[0,1]^n \to [0,1]$, for every $n$.
However, this may be problematic, as continuum many
connectives would give rise to uncountably many formulae even
in a countable signature.
To avoid this anomaly we will content ourselves with
a set of connectives which merely
allows to construct arbitrarily good approximations of every
continuous mapping $[0,1]^n \to [0,1]$.

Common connectives we use, by arity:
\begin{itemize}
\item Constants in $[0,1]$.
\item $\lnot x = 1-x$, and $\frac{x}{2}$.
\item $x\land y = \min \{x,y\}$, $x\lor y = \max\{x,y\}$,
  $x \dotminus y = (x-y)\lor0$,
  $x \dotplus y = (x+y)\land1$,
  $|x-y|$.
\end{itemize}
We can express the non-constant connectives above in terms of the
connectives $\lnot$ and $\dotminus$:
\begin{align*}
  x \land y & = x \dotminus (x \dotminus y) \\
  x \lor y & = \lnot(\lnot x\land\lnot y) \\
  x \dotplus y & = \lnot(\lnot x \dotminus y) \\
  |x - y| & = (x \dotminus y)\lor(y \dotminus x) =
  (x \dotminus y)\dotplus(y \dotminus x)
\end{align*}

The expression $x \dotminus ny$ is a shorthand for
$((x \dotminus y) \dotminus y) \ldots \dotminus y$, $n$ times.
We would also like to point out to the reader that the expression
$x \dotminus y$ is the analogue of the Boolean expression $y \to x$.
For example, the continuous Modus Ponens says that if both $y$ and
$x \dotminus y$ are true, i.e., equal to zero, then so is $x$.

\begin{dfn}
  \begin{enumerate}
  \item A \emph{system of continuous connectives} is a sequence $\cF =
    \{F_n\colon n<\omega\}$ where each $F_n$ is a collection of continuous
    functions from $[0,1]^n$ to $[0,1]$.
  \item We say that a system of continuous connectives $\cF$ is
    \emph{closed} if it satisfies:
    \begin{enumerate}
    \item For all $m < n < \omega$, the projection on the $m$th coordinate
      $\pi_{n,m}\colon [0,1]^n \to [0,1]$ belongs to $F_n$.
    \item Let $f \in F_n$, and $g_0,\ldots,g_{n-1} \in F_m$.
      Then the composition $f\circ(g_0,\ldots,g_{n-1}) \in F_m$.
    \end{enumerate}
    If $\cF$ is any system of continuous connectives, then $\bar \cF$ is
    the closed system it generates.
  \item We say that a closed system of continuous connectives $\cF$ is
    \emph{full} if for every $0 < n < \omega$, the set $F_n$ is dense in
    the set of all continuous functions $\{f\colon [0,1]^n \to [0,1]\}$ in the
    compact-open (i.e., uniform convergence) topology.
    An arbitrary system of continuous connectives $\cF$ is full if $\bar
    \cF$ is.
    \\
    (We exclude $n = 0$ in order to allow full systems of connectives
    without truth constants, i.e., in which $F_0$ is empty.)
  \end{enumerate}
\end{dfn}

\begin{fct}[Stone-Weierstrass Theorem, lattice version]
  \label{fct:LatSW}
  Let $X$ be a compact Hausdorff space containing at least two points,
  $I \subseteq \setR$ an interval,
  and equip $\fA = C(X,I)$ with the uniform convergence topology.
  Let $\fB \subseteq \fA$ be a sub-lattice, such that for every distinct
  $x,y \in X$, $a,b \in I$, and $\varepsilon > 0$, there is $f \in \fB$ such that
  $|f(x)-a|,|f(y)-b| < \varepsilon$.
  Then $\fB$ is dense in $\fA$.
\end{fct}
\begin{proof}
  The proof of this or very similar results should appear in almost
  any analysis textbook.
  We will nonetheless include the proof for completeness.

  Let $f \in \fA$ and $\varepsilon > 0$ be given. For each pair of points $x,
  y \in X$ we can by hypothesis find $g_{x,y} \in \fB$ for which
  $|g_{x,y}(x) -  f(x)|, |g_{x,y}(y) - f(y)|<\varepsilon$.
  (In case $x = y$ we take $g_{x,x} = g_{x,z}$ for any $z \neq x$.)
  The set $V_{x,y} = \{z \in X \colon f(z) - \varepsilon < g_{x,y}(z)\}$ is an open
  neighbourhood of $y$.

  Let us fix $x$.
  The family $\{V_{x,y}\colon y \in X\}$ is an open
  covering of $X$, and admits a finite sub-covering
  $\{V_{x,y_i}\colon i < n\}$.
  Let $g_x = \bigvee_{i<n} g_{x,y_i} \in \fB$.
  Then $f(z)-\varepsilon < g_x(z)$ for all $z \in X$ and
  $|g_x(x) - f(x)|<\varepsilon$.
  Thus $U_x = \{z \in X \colon g_x(z) < f(z) + \varepsilon\}$ is
  an open neighbourhood of $x$.

  Now let $x$ vary.
  The family $\{U_x\colon x\in X\}$ is an open covering of $X$ admitting a
  finite sub-covering $\{U_{x_j} \colon j < m\}$.
  Let $g = \bigwedge_{j<m} g_{x_j}$.
  Then $f(z) - \varepsilon < g(z) < f(z) + \varepsilon$
  for all $z \in X$, i.e., $\|g(z) - f(z)\| < \varepsilon$ as desired.
\end{proof}

It will be more convenient to use the following consequence, which is
analogous to the Stone-Weierstrass characterisation of dense algebras
of functions:
\begin{cor}
  \label{cor:SWCnct}
  Let $X$ be a compact Hausdorff space and let $\fA = C(X,[0,1])$.
  Assume that $\fB \subseteq \fA$ is closed under $\lnot$ and $\dotminus$,
  separates points in $X$ (i.e., for every two distinct
  $x,y \in X$ there is $f \in \fB$ such that $f(x) \neq f(y)$), and satisfies
  either of the following two additional properties:
  \begin{enumerate}
  \item The set $C = \{c \in [0,1]\colon \text{the constant $c$ is in $\fB$}\}$
    is dense in $[0,1]$.
  \item $\fB$ is closed under $x \mapsto \frac{x}{2}$.
  \end{enumerate}
  Then $\fB$ is dense in $\fA$.
\end{cor}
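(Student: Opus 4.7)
The plan is to reduce everything to the lattice form of the Stone--Weierstrass theorem (\fref{fct:LatSW}), so I need to verify (a) that $\fB$ is a sublattice of $\fA$, and (b) that for any distinct $x,y \in X$, any $a,b \in [0,1]$, and any $\varepsilon > 0$, some $f \in \fB$ satisfies $|f(x)-a|, |f(y)-b| < \varepsilon$. Point (a) is immediate from the identities $x \land y = x \dotminus (x \dotminus y)$ and $x \lor y = \lnot(\lnot x \land \lnot y)$ recorded in the paper, which also show that $\fB$ is closed under $x \dotplus y = \lnot(\lnot x \dotminus y)$.

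My first real step is to reduce hypothesis~(2) to hypothesis~(1) by generating constants in $\fB$. If $\fB = \emptyset$ then $|X| \leq 1$ and the statement is vacuous; otherwise, fixing any $g \in \fB$, the function $g \dotminus g$ is the constant $0$ and $\lnot 0$ is the constant $1$, both in $\fB$. Under (2), iterated halving gives $2^{-n} \in \fB$ for every $n$, and repeated $\dotplus$ combines these into every dyadic rational in $[0,1]$. Since the dyadic rationals are dense in $[0,1]$, hypothesis~(2) implies hypothesis~(1), and from now on I may work under~(1).

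For (b), given distinct $x,y$, I take a separating $g \in \fB$ and, after replacing $g$ by $\lnot g$ if needed, assume $g(x) < g(y)$. Density of the constants lets me pick a constant $c \in \fB$ with $g(x) < c < g(y)$, so that $g_0 := g \dotminus c$ lies in $\fB$, vanishes at $x$, and takes value $\alpha := g(y) - c > 0$ at $y$. The $n$-fold sum $\tilde g = g_0 \dotplus \cdots \dotplus g_0$ then belongs to $\fB$, is still zero at $x$, and equals $\min(n\alpha,1) = 1$ at $y$ once $n \geq 1/\alpha$. Finally, given $a,b \in [0,1]$ and $\varepsilon > 0$, I pick constants $a',b' \in \fB$ with $|a-a'|, |b-b'| < \varepsilon$ and set
\[
  h = (a' \dotminus \tilde g) \dotplus (b' \dotminus \lnot \tilde g) \in \fB;
\]
a direct computation gives $h(x) = a'$ and $h(y) = b'$, as required.

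The only step carrying real content is the manufacture of a dense set of constants from hypothesis~(2); once those are on hand, the construction of the interpolator $\tilde g$ and the final assembly of $h$ are essentially forced by the two-point interpolation requirement of \fref{fct:LatSW}.
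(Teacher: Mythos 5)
Your proof is correct and follows the paper's strategy: both reduce to the lattice Stone--Weierstrass theorem (\fref{fct:LatSW}), generate dyadic constants under hypothesis~(2), and then verify the two-point interpolation requirement. Your interpolator $h = (a' \dotminus \tilde g)\dotplus(b' \dotminus \lnot\tilde g)$, where $\tilde g \in \fB$ is pushed to $\{0,1\}$-values at $x,y$ by repeated $\dotplus$ of a separating function truncated below a constant, is a cleaner alternative to the paper's construction, which first builds the piecewise-linear $(a \dotminus m(t \dotminus x))\lor b$ in the special case $X=[0,1]$ (splitting on whether $a \geq b$) and then handles general $X$ by composing with a separating $g$.
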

\begin{proof}
  Since $\fB$ is closed under $\lnot$ and $\dotminus$ it is also closed under
  $\lor$ and $\land$, so it is a sub-lattice of $\fA$.

  Since $\fB$ separates points it is in particular non-empty, so we have
  $0 = f \dotminus f \in \fB$ for any $f \in \fB$, whereby $1 = \lnot0 \in \fB$.
  In case $\fB$ is closed under $\frac{x}{2}$ we conclude that
  $1/2^n \in \fB$ for all $n$, and since $\fB$ is also closed under
  $\dotplus$, $\fB$ contains all the dyadic constants in $[0,1]$ which
  are dense in $[0,1]$.
  We may therefore assume that $\fB$ contains a dense set of constants.

  Let $x,y \in X$ be distinct, $a,b \in [0,1]$ and $\varepsilon > 0$.
  Let us first treat the case where $X = [0,1]$ and
  $\id_{[0,1]} \in \fB$.
  Then $x,y \in [0,1]$, and without loss of generality we may assume
  that $x < y$.
  Assume first that $a \geq b$.
  Let $m \in \setN$ be such that $\frac{a}{y-x} < m$, and let
  $f_0(t) = (a \dotminus m(t \dotminus x))\lor b$.
  Then $f_0(x) = a\lor b = a$, and $f_0(y) = 0\lor b = b$.
  Replacing the constants $x,a,b \in [0,1]$ with close enough
  approximations from $C$ we obtain $f \in \fB$ such that
  $|f(x)-a|,|f(y)-b| < \varepsilon$.
  If $a < b$, find $f \in \fB$ such that
  $|f(x)-\lnot a|,|f(y)-\lnot b| < \varepsilon$,
  and then $\lnot f \in \fB$ is as required.

  We now return to the general case where $X$ can be any compact
  Hausdorff space.
  Since $x \neq y$ there is $g \in \fB$ such that
  $g(x) \neq g(y)$, and without loss of generality we may assume that
  $g(x) < g(y)$.
  As in the previous paragraph construct $f\colon [0,1] \to [0,1]$
  such that $|f(g(x))-a|,|f(g(y))-b| < \varepsilon$, and
  observe that $f\circ g \in \fB$.

  We have shown that $\fB$ satisfies the hypotheses of
  \fref{fct:LatSW} and is therefore dense in $\fA$.
\end{proof}

\begin{cor}
  \label{cor:Full}
  Let $C \subseteq [0,1]$ be dense, $1 \in C$.
  Then the following system is full:
  \begin{enumerate}
  \item $F_0 = C$ (i.e., a truth constant for each $c \in C$).
  \item $F_2 = \{\dotminus\}$.
  \item $F_n = \emptyset$ otherwise.
  \end{enumerate}
\end{cor}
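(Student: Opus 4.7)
The plan is to reduce to \fref{cor:SWCnct}, applied at each positive arity $n$ with $X = [0,1]^n$ and $\fB = \bar F_n$. My first task is to unpack what $\bar\cF$ contains. Clause (a) of closure places every projection $\pi_{n,i}$ in $\bar F_n$. For clause (b), the key observation is that when $n = 0$ the tuple of inner functions is empty, so every constant $c \in C = F_0$ reappears as an $m$-ary function in $\bar F_m$ for every $m \geq 0$; in particular each $\bar F_n$ contains a copy of $C$ as constant functions. Applying clause (b) once more, if $f, g \in \bar F_n$ then $f \dotminus g = \dotminus \circ (f,g) \in \bar F_n$, so $\bar F_n$ is closed under $\dotminus$. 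Since $1 \in C$, the identity $\lnot f = 1 \dotminus f$ shows that $\bar F_n$ is also closed under $\lnot$.

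Next I verify the hypotheses of \fref{cor:SWCnct} for $X = [0,1]^n$ with $n \geq 1$ and $\fB = \bar F_n \subseteq C([0,1]^n,[0,1])$. Compactness and Hausdorffness of $X$ are immediate, and closure of $\fB$ under $\lnot$ and $\dotminus$ was just checked. To see that $\fB$ separates points, note that two distinct tuples in $[0,1]^n$ differ in some coordinate $i$, and the projection $\pi_{n,i} \in \bar F_n$ distinguishes them. The set of constants in $\bar F_n$ contains $C$, which is dense in $[0,1]$ by hypothesis, so the first of the two alternative additional conditions listed in \fref{cor:SWCnct} is met. The corollary then yields density of $\bar F_n$ in $C([0,1]^n,[0,1])$ for every $n \geq 1$, which is precisely fullness of $\cF$.

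There is no real obstacle: the only mildly subtle point is recognising that constants at arbitrary arity genuinely sit inside $\bar\cF$ via the degenerate composition with an empty tuple, and that $\lnot$ becomes available once $\dotminus$ is paired with the truth constant $1$. Once these remarks are in place, the statement reduces immediately to the Stone--Weierstrass consequence already proved in \fref{cor:SWCnct}.
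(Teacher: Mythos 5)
Your proof is correct, and since the paper states this as a corollary of \fref{cor:SWCnct} without giving a proof, your reduction is precisely the intended one: observe that $\bar F_n$ contains all projections (clause~(a) of closedness) and, via the degenerate $n=0$ case of clause~(b), the constant functions $c$ for $c \in C$; then closure under $\dotminus$ and hence under $\lnot = 1 \dotminus (\cdot)$ follows, so \fref{cor:SWCnct} applies with $X = [0,1]^n$ and the density-of-constants alternative. The only point worth flagging explicitly, which you do, is that the constants propagate to every arity through the vacuous composition, without which $\bar F_n$ for $n \neq 0, 2$ would contain only projections and their $\dotminus$-combinations.
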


\begin{cor}
  \label{cor:FullFin}
  The following system is full:
  \begin{enumerate}
  \item $F_1 = \{\lnot, \frac{x}{2}\}$.
  \item $F_2 = \{\dotminus\}$.
  \item $F_n = \emptyset$ otherwise.
  \end{enumerate}
\end{cor}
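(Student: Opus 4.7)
\medskip

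\noindent\textbf{Proof plan.} The plan is to apply \fref{cor:SWCnct} to the closed system $\bar\cF$ on the compact Hausdorff space $X = [0,1]^n$ for each $n \geq 1$, taking $\fB = \bar F_n \subseteq C([0,1]^n,[0,1]) = \fA$.

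First I would verify that $\bar F_n$ satisfies the structural hypotheses of \fref{cor:SWCnct}. By definition $\bar\cF$ is closed under composition, so $\bar F_n$ is closed under $\lnot$, $\dotminus$ and $x \mapsto x/2$ (apply these unary/binary connectives from $\cF$ to members of $\bar F_n$). In particular condition (ii) of \fref{cor:SWCnct} holds, so we need not worry about producing truth constants by hand.

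Next I would check that $\bar F_n$ separates points of $[0,1]^n$. Given distinct $\bar x, \bar y \in [0,1]^n$, pick a coordinate $i < n$ with $x_i \neq y_i$; the projection $\pi_{n,i}$ lies in $\bar F_n$ by the definition of a closed system, and $\pi_{n,i}(\bar x) \neq \pi_{n,i}(\bar y)$. This is really the only point where anything needs to be said, and it is immediate.

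With both hypotheses of \fref{cor:SWCnct} verified, that corollary yields that $\bar F_n$ is dense in $C([0,1]^n,[0,1])$ for each $n \geq 1$, which is exactly the fullness of $\cF$. The main (and only) potential obstacle would have been producing enough truth constants, but the presence of $x/2$ among our connectives sidesteps this entirely via condition (ii) of \fref{cor:SWCnct}, whose proof internally generates the dyadic constants from $0 = f \dotminus f$, $\lnot$, $x/2$ and $\dotplus$ (and $\dotplus$ itself is expressible from $\lnot$ and $\dotminus$ as noted before the definition).
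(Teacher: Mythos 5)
Your proposal is correct and matches the route the paper implicitly intends (the corollary is stated without proof, as an immediate application of \fref{cor:SWCnct}). You correctly set $X=[0,1]^n$, obtain closure of $\bar F_n$ under $\lnot$, $\dotminus$, $x/2$ from closure of $\bar\cF$ under composition with the generators, get point-separation from the projections that a closed system is required to contain, and invoke hypothesis (ii) of \fref{cor:SWCnct} rather than having to exhibit constants — exactly the reason that alternative hypothesis was included in \fref{cor:SWCnct}.
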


The system appearing in \fref{cor:Full}
can be viewed as the continuous analogue of the full
system of Boolean connectives $\{T,F,\to\}$ ($T$ and $F$ being truth constants),
while that of \fref{cor:FullFin} is reminiscent of $\{\lnot,\to\}$.
We will usually use the latter (i.e.,
$\{\lnot,\frac{x}{2},\dotminus\}$), which has the advantage of being
finite.
Note however that for this we need to introduce an
additional unary connective $\frac{x}{2}$
which has no counterpart in classical
discrete logic.

\begin{rmk}
  \label{rmk:NotFull}
  Unlike the discrete case, the family $\{\lnot,\lor,\land\}$ is not full, and this
  cannot be remedied by the addition of truth constants.
  Indeed, it can be verified by induction that every function
  $f\colon [0,1]^n \to [0,1]$ constructed from these connectives is
  $1$-Lipschitz in every argument.
\end{rmk}

This takes care of connectives: any full system would do.
We will usually prefer to work with countable systems of connectives,
so that countable signatures give countable languages.
When making general statements (e.g., the axioms for pseudo-metrics
and uniform continuity we give below) it is advisable to use a
minimal system of connectives, and we will usually use the one
from \fref{cor:FullFin} consisting of
$\{\lnot,\frac{x}{2},\dotminus\}$.
On the other hand, when spelling out actual theories, it may be
convenient (and legitimate) to admit additional continuous functions
from $[0,1]^n$ to $[0,1]$ as connectives.

As for quantifiers, the situation is much simpler: we contend that
the
transition from $\{T,F\}$ to $[0,1]$ imposes a single pair of
quantifiers, or rather, imposes a re-interpretation of the classical
quantifiers $\forall$ and $\exists$
(on this point we differ quite significantly from \cite{Chang-Keisler:ContinuousModelTheory}).
In order to see this, let us look for a construction of the discrete
quantifiers $\forall$ and $\exists$.

Let $M$ be a set, and $\cR_M(n)$ be the set of all $n$-ary relations
on $M$; we may view each $R \in \cR_M(n)$ as a property of $n$ free
variables $x_0,\ldots,x_{n-1}$.
For every $R \in \cR_M(n)$, let $j(R) \in \cR_M(n+1)$ be defined as the
same relation, with an additional dummy variable $x_n$.
Similarly, for every $R \in \cR_M(n+1)$ we have relations $\exists x_n\,R$
and $\forall x_n\, R$ in $\cR_M(n)$.
Then for every $R \in \cR_M(n)$ and $Q \in \cR_M(n+1)$ the following two
properties hold
(here ``$\models$'' means ``implies''):
\begin{align*}
  Q \models j(R) & \Longleftrightarrow \exists x_n\,Q \models R\\
  j(R) \models Q & \Longleftrightarrow R \models \forall x_n\,Q.
\end{align*}
These properties actually determine the relations $\exists x_n\,Q$ and
$\forall x_n\,Q$, and can therefore be used as the definition of the
semantics of the quantifiers.

Replacing $\{T,F\}$ with $[0,1]$, let $\cC_M(n)$ be the set of all
functions from $M^n$ to $[0,1]$.
We define $j\colon \cC_M(n) \to \cC_M(n+1)$ as above, and
$\inf_{x_n},\sup_{x_n} \colon \cC_M(n+1) \to \cC_M(n)$ in the obvious
manner.
Since we identify $T$ with $0$ and $F$ with $1$, the relation $\models$
should be replaced with $\geq$, and we observe that for every
$f \in \cC_M(n)$ and $g \in \cC_M(n+1)$:
\begin{align*}
  g \geq j(f) & \Longleftrightarrow \inf_{x_n} g \geq f\\
  j(f) \geq g & \Longleftrightarrow f \geq \sup_{x_n} g.
\end{align*}
Therefore, as in discrete logic, we will have two quantifiers, whose
semantics are defined by the properties above.
We will use the symbols $\inf$ and $\sup$, respectively, to denote the
quantifiers, as these best describe their semantics.
Make no mistake, though: these are not ``new'' quantifiers that we
have ``chosen'' for continuous logic, but rather the only possible
re-interpretation of the discrete quantifiers $\exists$ and $\forall$ in
continuous logic.
(\fref{rmk:AEntn} below will relate our quantifiers to Henson's
sense of approximate satisfaction of quantifiers, further justifying
our choice of quantifiers.)

Once we have connectives and quantifiers, we define the set of
continuous first order formulae in the usual manner.

\begin{dfn}
  A \emph{condition} is an expression of the form $\varphi = 0$ where $\varphi$ is a
  formula.

  A condition is \emph{sentential} if
  $\varphi$ is a sentence.

  A condition is \emph{universal} if it is of the form
  $\sup_{\bar x} \varphi = 0$ where $\varphi$ is quantifier-free
\end{dfn}

If $r$ is a dyadic number then $\varphi \leq r$ is an abbreviation for
the condition $\varphi \dotminus r = 0$, and similarly $\varphi \geq r$ for
$r \dotminus \varphi = 0$.
(Thus $\sup_{\bar x} \varphi \leq r$ and $\inf_{\bar x} \varphi \geq r$ abbreviate
universal conditions.)
With some abuse of notation we may use $\varphi \leq r$ for an arbitrary
$r \in [0,1]$ as an abbreviation for the \emph{set} of conditions
$\{\varphi \leq r'\colon r' \geq r \text{ dyadic}\}$.
We define $\varphi \geq r$ and $\varphi = r$ as abbreviations for sets of conditions
similarly.

\begin{ntn}
  \label{ntn:UnivCond}
  Given a formula $\varphi$
  we will use $\forall \bar x\, \varphi = 0$ as an alternative notation for
  $\sup_{\bar x} \varphi = 0$.
  While this may be viewed as a mere notational convention,
  the semantic contents of $\forall\bar x\, (\varphi = 0)$ is indeed
  equivalent to that of $(\sup_{\bar x} \varphi) = 0$
  (notice how the parentheses move, though).
  Similarly, we may write $\forall\bar x\, \varphi \leq r$ for
  $\sup_{\bar x} \varphi \leq r$ and $\forall\bar x\, \varphi \geq r$ for
  $\inf_{\bar x} \varphi \geq r$.
\end{ntn}

\section{Continuous structures}
\label{sec:struct}

In classical logic one usually has a distinguished binary predicate
symbol $=$, and the logic requires that this
symbol always be interpreted as actual equality.
The definition we gave for a non-metric continuous signature is the
analogue of a discrete signature \emph{without equality}.
The analogue of a discrete signature with equality is somewhat
trickier, since the symbol taking equality's place need no longer be
discrete.
Discrete equality always satisfies the equivalence relation axioms:
\begin{align*}
  \tag{ER}\label{ax:er}
  & \begin{aligned}
    & \forall x\,x=x\\
    & \forall xy\,x=y\to y=x\\
    & \forall xyz\,x=y \to (y=z\to x=z)\\
  \end{aligned}
\end{align*}
Still within the discrete framework, let us replace the symbol $=$
with the symbol $d$.
Recalling that $T = 0$, $F = 1$ we obtain the discrete metric:
\begin{gather*}
  d(a,b) =
  \begin{cases}
    0 & a = b\\
    1 & a \neq b
  \end{cases}
\end{gather*}
Let us now translate \fref{ax:er} to continuous logic, recalling that
$\dotminus$ is the analogue of implication:
\begin{align*}
  \tag{PM}\label{ax:pm}
  & \begin{aligned}
    & \sup_x\, d(x,x) = 0\\
    & \sup_{xy}\, d(x,y) \dotminus d(y,x) = 0\\
    & \sup_{xyz}\, (d(x,z) \dotminus d(y,z)) \dotminus d(x,y) = 0
  \end{aligned}\\
\end{align*}
Following \fref{ntn:UnivCond}, we can rewrite \fref{ax:pm}
equivalently as the axioms of a pseudo-metric, justifying the
use of the symbol $d$:
\begin{align}
  \tag{PM$'$}\label{ax:pmp}
  & \begin{aligned}
    & \forall x\, d(x,x) = 0\\
    & \forall xy\, d(x,y) = d(y,x)\\
    & \forall xyz\, d(x,y) \leq d(x,z)+d(z,y)
  \end{aligned}
\end{align}

By the very definition of equality it is also a
congruence relation for all the other symbols, which can be
axiomatised as:
\begin{align*}
  \tag{CR}\label{ax:cr}
  & \begin{aligned}
    & \forall\bar x\bar yzw\,\big(z=w\to f(\bar x,z,\bar y)=f(\bar x,w,\bar y)\big)\\
    & \forall\bar x\bar yzw\,\big(z=w\to(P(\bar x,z,\bar y)\to P(\bar x,w,\bar y))\big)
  \end{aligned}
\end{align*}
Translating \fref{ax:cr} to continuous logic as we did with
\fref{ax:er} above would yield axioms saying that every symbol is
$1$-Lipschitz with respect to $d$ in each of the variables.
While we could leave it like this, there is no harm in allowing other
moduli of uniform continuity.
\begin{dfn}
  \label{dfn:ContMod}
  \begin{enumerate}
  \item A \emph{continuity modulus} is a
    function $\delta\colon (0,\infty) \to (0,\infty)$
    (for our purposes a domain of $(0,1]$ would suffice).
  \item Let $(X_1,d_1)$, $(X_2,d_2)$ be metric spaces.
    We say that a mapping $f\colon X_1 \to X_2$ is \emph{uniformly continuous
      with respect} to a continuity
    modulus $\delta$ (or just that $f$ \emph{respects} $\delta$)
    if for all $\varepsilon > 0$ and for all $x, y \in X_1$:
    $d_1(x,y) < \delta(\varepsilon) \Longrightarrow d_2(f(x),f(y)) \leq \varepsilon$.
  \end{enumerate}
  (For a different approach to the definition of continuity moduli see
  \fref{apdx:InvContMod}.)
\end{dfn}
Thus, for each $n$-ary symbol $s$ and each $i < n$ we will fix
a continuity modulus
$\delta_{s,i}$, and the congruence relation property will be replaced with
the requirement that as a function of its $i$th argument,
$s$ should respect $\delta_{s,i}$.
As above this can be written in pure continuous logic or be translated
to a more readable form:
\begin{align}
  \tag{UC$_\cL$}\label{ax:uc}
  & \begin{aligned}
    & \sup_{x_{<i},y_{<n-i-1},z,w}\,
    \big( \delta_{f,i}(\varepsilon) \dotminus d(z,w) \big) \land
    \big( d(f(\bar x,z,\bar y),f(\bar x,w,\bar y))\dotminus\varepsilon \big)
     = 0\\
    & \sup_{x_{<i},y_{<n-i-1},z,w}\,
    \big(\delta_{P,i}(\varepsilon) \dotminus d(z,w) \big) \land
    \big(
    (P(\bar x,z,\bar y)\dotminus P(\bar x,w,\bar y))\dotminus\varepsilon
    \big) = 0
  \end{aligned}\\
  \tag{UC$'_\cL$}\label{ax:ucp}
  & \begin{aligned}
    & \forall x_{<i},y_{<n-i-1},z,w\,
    \Big(d(z,w) < \delta_{f,i}(\varepsilon) \to d(f(\bar x,z,\bar y),f(\bar x,w,\bar y))\leq \varepsilon\Big)\\
    & \forall x_{<i},y_{<n-i-1},z,w\,
    \Big(d(z,w) < \delta_{P,i}(\varepsilon) \to
    \big( P(\bar x,z,\bar y) \dotminus P(\bar x,w,\bar y) \big)\leq\varepsilon\Big)
  \end{aligned}
\end{align}
Here $x_{<i}$ denotes the tuple $x_0,\ldots,x_{i-1}$, and similarly for
$y_{<n-i-1}$, etc.

\begin{rmk}
  The axiom scheme \fref{ax:uc} can be reformulated to mention only
  constants in some dense set $C \subseteq [0,1]$ (say rational or
  dyadic numbers): simply, for every $\varepsilon > 0$, and every $r,q \in C$ such
  that $r > \varepsilon$ and $q < \delta_{s,i}(\varepsilon)$ (where $s$ is either $f$ or $P$):
  \begin{align*}
    \tag{UC$''_\cL$}\label{ax:ucpp}
    & \begin{aligned}
      & \sup_{\bar x,\bar y,z,w}\,
      \big(q \dotminus d(z,w) \big) \land
      \big(
      d(f(\bar x,z,\bar y),f(\bar x,w,\bar y))\dotminus r
      \big)
      = 0\\
      & \sup_{\bar x,\bar y,z,w}\,
      \big(q \dotminus d(z,w) \big) \land
      \big(
      (P(\bar x,z,\bar y)\dotminus P(\bar x,w,\bar y))\dotminus r
      \big) = 0
    \end{aligned}
  \end{align*}
\end{rmk}

This leads to the following definition:
\begin{dfn}
  A \emph{(metric) continuous signature} is a non-metric continuous
  signature along with the following additional data:
  \begin{enumerate}
  \item One binary predicate symbol, denoted $d$, is specified as the
    distinguished \emph{distance symbol}.
  \item For each $n$-ary symbol $s$, and for each $i < n$
    a continuity modulus $\delta_{s,i}$, called
    the \emph{uniform continuity modulus} of $s$ with respect to the
    $i$th argument.
  \end{enumerate}

  If we work with a multi-sorted signature then each sort $S$ has
  its own distinguished distance symbol $d_S$.
\end{dfn}

\begin{dfn}
  Let $\cL$ be a continuous signature.
  A \emph{(continuous) $\cL$-pre-structure} is a set $M$
  equipped, for every $n$-ary function symbol $f \in \cL$, with a
  mapping $f^M\colon M^n \to M$, and for every $n$-ary relation symbol
  $P \in \cL$, with a mapping $P^M\colon M^n \to [0,1]$, such that the
  pseudo-metric
  and uniform continuity axioms \fref{ax:pm}, \fref{ax:uc}
  (or equivalently \fref{ax:pmp}, \fref{ax:ucp}) hold.

  An \emph{$\cL$-structure} is a pre-structure $M$ in which $d^M$ is a
  complete metric (i.e., $d^M(a,b) = 0 \Longrightarrow a = b$ and every Cauchy
  sequence converges).
\end{dfn}

The requirement that $d^M$ be a metric corresponds to the
requirement that $=^M$ be equality.
Completeness, on the other hand, has no analogue in discrete
structures, since every discrete metric is trivially complete; still,
it turns out to be the right thing to require.

As in classical logic, by writing a term $\tau$ as $\tau(\bar x)$ we mean
that all variables occurring in $\tau$ appear in $\bar x$.
Similarly, for a formula $\varphi$ the notation $\varphi(\bar x)$ means that the
tuple $\bar x$ contains all free variables of $\varphi$.

\begin{dfn}
  \label{dfn:TermInterp}
  Let $\tau(x_{<n})$ be a term, $M$ a $\cL$-pre-structure.
  The \emph{interpretation of $\tau(\bar x)$ in $M$} is a function
  $\tau^M\colon M^n \to M$ defined inductively as follows:
  \begin{itemize}
  \item If $\tau = x_i$ then $\tau^M(\bar a) = a_i$.
  \item If $\tau = f(\sigma_0,\ldots,\sigma_{m-1})$ then
    $\tau^M(\bar a) = f^M\big( \sigma_0^M(\bar a),\ldots,\sigma_{m-1}^M(\bar a) \big)$.
  \end{itemize}
\end{dfn}

\begin{dfn}
  \label{dfn:FormInterp}
  Let $\varphi(x_{<n})$ be a term, $M$ a $\cL$-pre-structure.
  The \emph{interpretation of $\varphi(\bar x)$ in $M$} is a function
  $\varphi^M\colon M^n \to [0,1]$ defined inductively as follows:
  \begin{itemize}
  \item If $\varphi = P(\tau_0,\ldots,\tau_{m-1})$ is atomic then
    $\varphi^M(\bar a) = P^M\big( \tau_0^M(\bar a),\ldots,\tau_{m-1}^M(\bar a) \big)$.
  \item If $\varphi = \lambda(\psi_0,\ldots,\psi_{m-1})$ where $\lambda$ is a continuous connective
    then
    $\varphi^M(\bar a) = \lambda\big( \psi_0^M(\bar a),\ldots,\psi_{m-1}^M(\bar a) \big)$.
  \item If $\varphi = \inf_y \psi(y,\bar x)$
    $\varphi^M(\bar a) = \inf_{b\in M} \psi^M(b,\bar a)$, and similarly for
    $\sup$.
  \end{itemize}
\end{dfn}

\begin{prp}
  \label{prp:UnifContLanguage}
  Let $M$ be an $\cL$-pre-structure, $\tau(x_{<n})$ a term,
  $\varphi(x_{<n})$ a formula.
  Then the mappings $\tau^M\colon M^n \to M$ and $\varphi^M\colon M^n \to [0,1]$ are
  uniformly continuous in each of their
  arguments.
  Moreover, $\tau^M$ and $\varphi^M$ respect uniform continuity moduli which depend
  only on $\tau$ and $\varphi$ but not on $M$.
\end{prp}
\begin{proof}
  In the case of terms, this is just an inductive argument using the
  fact that a composition of
  uniformly continuous mappings is uniformly continuous.
  In the case of formulae one needs two more facts.
  First, all connectives are uniformly continuous as continuous
  mappings from a compact space.
  Second, if $\varphi(\bar x) = \inf_y \psi(y,\bar x)$ then any uniform
  continuity modulus $\psi(y,\bar x)$ respects with respect to $x_i$ is
  also respected by $\varphi$.

  As the uniform continuity proof above does not depend on $M$ in any
  way, uniform continuity moduli for terms and formulae can be
  extracted from the inductive argument.
\end{proof}

\begin{dfn}
  \label{dfn:CondSat}
  \begin{enumerate}
  \item Let $s(\bar x)$ be a condition $\varphi(\bar x) = 0$,
    $M$ an $\cL$-(pre-)structure and $\bar a \in M$.
    We say that $s$ is satisfied by $\bar a$ in $M$, in symbols
    $M \models s(\bar a)$, or even $\bar a \models s$ (in case the ambient
    structure $M$ is clear from the context) if
    $\varphi^M(\bar a) = 0$.
  \item A set of conditions $\Sigma(\bar x)$ is \emph{satisfied} by a tuple
    $\bar a \in M$ (again denoted $M \models \Sigma(\bar a)$ or $\bar a \models \Sigma$)
    if all conditions in $\Sigma$ are satisfied by $\bar a$ in $M$.
    This makes sense just as well in case $\Sigma$ involves
    infinitely many free variables.
  \item
    Earlier on we defined $\varphi(\bar x) \leq r$, $\varphi(\bar x) \geq r$, etc., as
    abbreviations for conditions or sets thereof.
    Notice that $\varphi^M(\bar a) \leq r$ if and only if the corresponding
    (set of) condition(s) holds for $\bar a$, and similarly for the
    other abbreviations.
    We may therefore extend the definition above without ambiguity to
    satisfaction of such abbreviations.
  \item If $\Sigma(\bar x)$ is a set of conditions and $s(\bar x)$
    a condition, we say that $s$ is a \emph{logical consequence} of $\Sigma$,
    or that $\Sigma$ \emph{implies} $s$, in symbols $\Sigma \models s$, if for every
    $M$ and $\bar a \in M$: $M \models \Sigma(\bar a) \Longrightarrow M \models s(\bar a)$.
  \item A set of conditions $\Sigma$ is \emph{satisfiable}
    if there is a structure
    $M$ and $\bar a \in M$ such that $M \models \Sigma(\bar a)$.
    It is \emph{finitely satisfiable} if every finite
    $\Sigma_0 \subseteq \Sigma$ is satisfiable.
    We may further say that $\Sigma$ is \emph{approximately finitely
      satisfiable} if for every finite subset $\Sigma_0 \subseteq \Sigma$, which we may
    assume to be of the form $\{\varphi_i = 0\colon i < n\}$, and for every
    $\varepsilon >0$,
    the set of conditions $\{\varphi_i \leq \varepsilon\colon i < n\}$ is satisfiable.
  \end{enumerate}
\end{dfn}

\begin{dfn}
  A \emph{morphism} of $\cL$-pre-structures is a mapping of the
  underlying sets which preserves the interpretations of the symbols.
  It is \emph{elementary} if it preserves the truth values of formulae
  as well.
\end{dfn}

\begin{prp}
  \label{prp:cmplt}
  Let $M$ be an $\cL$-pre-structure.
  Let $M_0 = M/\{d^M(x,y)=0\}$, and let $d_0$ denote the metric
  induced by $d^M$ on $M_0$.
  Let $(\hat M_0,\hat d_0)$ be the completion of the metric space
  $(M_0,d_0)$ (which is for all intents and purposes unique).

  Then there exists a unique way to define an $\cL$-structure
  $\hat M$ on the set $\hat M_0$ such that $d^{\hat M} = \hat d_0$
  and the natural
  mapping $M \to \hat M$ is a morphism.
  We call $\hat M$ the
  $\cL$-structure \emph{associated to} $M$.

  Moreover:
  \begin{enumerate}
  \item If $N$ is any other $\cL$-structure, then any morphism $M \to N$
    factors uniquely through $\hat M$.
  \item The mapping $M \to \hat M$ is elementary.
  \end{enumerate}

  Another way of saying this is that the functor $M \mapsto \hat M$
  is the left adjoint of the forgetful functor from the category of
  $\cL$-structures to that of $\cL$-pre-structures, and that it
  sends elementary morphisms to elementary morphisms.
\end{prp}
\begin{proof}
  Straightforward using standard facts about metrics, pseudo-metrics
  and completions.
\end{proof}

We say that two formulae are \emph{equivalent}, denoted $\varphi \equiv \psi$ if they
define the same functions on every $\cL$-structure (equivalently: on
every $\cL$-pre-structure).
For example, let $\varphi[t/x]$ denote the free substitution of $t$ for $x$
in $\varphi$.
Then if $y$ does not appear in $\varphi$, then
$\sup_x \varphi \equiv \sup_y \varphi[y/x]$ (this is bound substitution of $y$ for $x$
in $\sup_x \varphi$).
Similarly, provided that $x$ is not free in $\varphi$ we have
$\varphi\land\sup_x \psi \equiv \sup_x \varphi\land\psi$,
$\varphi \dotminus \sup_x \psi \equiv \sup_x (\varphi \dotminus \psi)$, etc.

Using these and similar observations, it is easy to verify that all
formulae written using the full system of connectives
$\{\lnot,\half[x],\dotminus\}$ have equivalent prenex forms.
In other words, for every such formula $\varphi$ there is an equivalent formula
of the form $\psi = \sup_x \inf_y \sup_z \ldots \varphi$, where $\varphi$ is
quantifier-free.
The same would hold with any other system of connectives which are
monotone in each of their arguments.

\begin{rmk}
  \label{rmk:AEntn}
  We can extend \fref{ntn:UnivCond} to all conditions in prenex
  form, and thereby to all conditions.
  Consider a condition $\psi \leq r$ (a condition of the basic form
  $\psi = 0$ is equivalent to $\psi \leq 0$).
  Write $\psi$ in prenex form, so the condition becomes:
  \begin{gather*}
    \sup_x \left( \inf_y \left( \sup_z \ldots \varphi(x,y,z,\ldots)\right)\right)
    \leq r
  \end{gather*}
  A reader familiar with Henson's logic \cite{Henson-Iovino:Ultraproducts}
  will not find it difficult to
  verify, by induction on the number of quantifiers,
  that this is equivalent to the \emph{approximate satisfaction} of:
  \begin{gather*}
    \forall x \Big( \exists y \Big( \forall z\ldots\,\varphi(x,y,z,\ldots) \leq r \Big)\Big)
  \end{gather*}
  (Notice how the parentheses move, though.)
\end{rmk}

We view this as additional evidence to the analogy between
the continuous quantifiers $\inf$ and $\sup$ and the Boolean
quantifiers $\exists$ and $\forall$.

\begin{rmk}
  Unlike the situation in Henson's logic, there are no bounds on the
  quantifiers as everything in our logic is already assumed to be
  bounded.
  For a fuller statement of equivalence between satisfaction in
  continuous logic and approximate satisfaction in positive bounded
  logic, see the section on unbounded structure in
  \cite{BenYaacov:Perturbations}.
  In particular we show there that under appropriate modifications
  necessitated by the fact that Henson's logic considers unbounded
  structures, it has the same power of expression as continuous logic.
\end{rmk}

\begin{dfn}
  Let $M$ be an $\cL$-structure.
  A \emph{formula with parameters in $M$} is something of the form
  $\varphi(\bar x,\bar b)$, where $\varphi(\bar x,\bar y)$ is a formula in the
  tuples of variables $\bar x$ and $\bar y$, and
  $\bar b \in M$.
  Such a formula can also be viewed as an $\cL(M)$-formula, where
  $\cL(M)$ is obtained from $\cL$ by adding constant symbols for the
  elements of $M$, in which case it may be denoted by $\varphi(\bar x)$
  (i.e., the parameters may be ``hidden'').
\end{dfn}

\begin{dfn}[Ultraproducts]
  Let $\{M_i\colon i \in I\}$ be $\cL$-structures (or even pre-structures), and
  $\cU$ an ultrafilter on $I$.
  Let $N_0 = \prod_i M_i$, and interpret the function and predicate symbols on
  it as follows:
  \begin{gather*}
    f^{N_0}((a_i),(b_i),\ldots) = (f^{M_i}(a_i,b_i,\ldots))\\
    P^{N_0}((a_i),(b_i),\ldots) = \lim_\cU P^{M_i}(a_i,b_i,\ldots)
  \end{gather*}
  Recall that a sequence in a compact set has a unique limit modulo an
  ultrafilter: for any open set $U \subseteq [0,1]$, we have:
  \begin{gather*}
    P^{N_0}((a_i),(b_i),\ldots) \in U \Longleftrightarrow \{i \in I\colon P^{M_i}(a_i,b_i,\ldots) \in U\} \in \cU
  \end{gather*}
  It is immediate to verify that $N_0$ satisfies \fref{ax:pm} and
  \fref{ax:uc}, so $N_0$ is an $\cL$-pre-structure.
  Finally, define $\prod_i M_i/\cU = N = \hat N_0$, and call it the
  \emph{ultraproduct} of $\{M_i\colon i \in I\}$ modulo $\cU$.
\end{dfn}

\begin{thm}[\L o\'s's Theorem for continuous logic]
  Let $N = \prod_i M_i/\cU$ as above.
  For every tuple $(a_i) \in \prod M_i$ let $[a_i]$ be its image in $N$.
  Then for every formula $\varphi(\bar x)$ we have:
  \begin{gather*}
    \varphi^N([a_i],[b_i],\ldots) = \lim_\cU \varphi^{M_i}(a_i,b_i,\ldots)
  \end{gather*}
\end{thm}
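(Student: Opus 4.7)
The plan is to proceed by induction on the complexity of $\varphi$, first establishing the statement in the pre-structure $N_0 = \prod_i M_i$, and then transferring to $N = \hat N_0$ via \fref{prp:cmplt}, which tells us that the natural map $N_0 \to N$ is elementary.

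For the base case, I would first prove by a routine induction on terms $\tau$ that $\tau^{N_0}((a_i),(b_i),\ldots) = (\tau^{M_i}(a_i,b_i,\ldots))$ as an element of $\prod_i M_i$; this is immediate from the componentwise definition of $f^{N_0}$. Atomic formulae $P(\tau_0,\ldots,\tau_{m-1})$ then follow by the very definition of $P^{N_0}$ as the ultralimit of the $P^{M_i}$. For the connective case, I would use that any continuous connective $\lambda\colon[0,1]^m \to [0,1]$ commutes with the ultralimit on compact sets: if $r^k_i \to r^k$ modulo $\cU$ for each $k<m$, then $\lambda(r^0_i,\ldots,r^{m-1}_i) \to \lambda(r^0,\ldots,r^{m-1})$ modulo $\cU$, which is a standard property of continuous maps and ultrafilter limits in a compact Hausdorff space.

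The main step, and the only real obstacle, is the quantifier case, say $\varphi = \inf_y \psi(y,\bar x)$. Writing $\bar a = ((a^0_i),\ldots)$ for the tuples in $\prod M_i$, I need to show
\[
\inf_{(c_i) \in N_0} \psi^{N_0}((c_i),\bar a) \;=\; \lim_\cU \inf_{b \in M_i} \psi^{M_i}(b,a^0_i,\ldots).
\]
For the $\leq$ direction, I would fix an arbitrary $(c_i) \in N_0$; by the inductive hypothesis applied to $\psi$,
\[
\psi^{N_0}((c_i),\bar a) = \lim_\cU \psi^{M_i}(c_i,a^0_i,\ldots) \geq \lim_\cU \inf_{b \in M_i} \psi^{M_i}(b,a^0_i,\ldots),
\]
and taking the infimum on the left gives the inequality. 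For the $\geq$ direction, given $\varepsilon > 0$, for each $i \in I$ I would choose $c_i \in M_i$ with $\psi^{M_i}(c_i,a^0_i,\ldots) \leq \inf_b \psi^{M_i}(b,a^0_i,\ldots) + \varepsilon$; then $(c_i) \in N_0$ and, by induction applied to $\psi$ again, $\psi^{N_0}((c_i),\bar a) \leq \lim_\cU \inf_b \psi^{M_i}(b,a^0_i,\ldots) + \varepsilon$. Letting $\varepsilon \to 0$ finishes the inductive step. The $\sup$ case is dual.

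Finally, to pass from $N_0$ to $N = \hat N_0$: the image of $N_0$ in $N$ is dense, $\varphi^N$ is uniformly continuous in each argument by \fref{prp:UnifContLanguage}, and the natural map $N_0 \to N$ is elementary by \fref{prp:cmplt}, so $\varphi^N([a_i],[b_i],\ldots) = \varphi^{N_0}((a_i),(b_i),\ldots) = \lim_\cU \varphi^{M_i}(a_i,b_i,\ldots)$, as required. (Note that density is implicitly already used inside the quantifier step once we reinterpret it over $N$: infima over $N$ and over the dense subset $N_0$ agree for uniformly continuous functions, reconciling the two formulations.)
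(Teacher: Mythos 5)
Your proof is correct and follows essentially the same route the paper indicates (the paper's proof is simply ``by induction on the complexity of $\varphi$,'' deferring to Chang--Keisler for details); the decomposition into terms, atomic formulae, connectives (via compactness and continuity), quantifiers (via near-witnesses chosen coordinatewise), and the final transfer to $\hat N_0$ through \fref{prp:cmplt} is the standard argument. The only blemish is cosmetic: in the quantifier step you have swapped the labels ``$\leq$ direction'' and ``$\geq$ direction'' --- the argument that fixes an arbitrary $(c_i)$ and takes an infimum establishes $\inf_{(c_i)} \psi^{N_0} \geq \lim_\cU \inf_b \psi^{M_i}$, while the $\varepsilon$-near-witness argument establishes $\leq$ --- but both inequalities are in fact proved correctly, so the content is sound.
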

\begin{proof}
  By induction on the complexity of $\varphi$.
  See also \cite[Chapter~V]{Chang-Keisler:ContinuousModelTheory}.
\end{proof}

\begin{cor}[Compactness Theorem for continuous first order logic]
  \label{cor:Cpt}
  Let $\Sigma$ be a family of conditions (possibly with free variables).
  Then $\Sigma$ is satisfiable in an $\cL$-structure if and only if it is
  finitely so, and furthermore if and only if it is approximately
  finitely so (see \fref{dfn:CondSat}).
\end{cor}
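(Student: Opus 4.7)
The implications $\text{satisfiable} \Rightarrow \text{finitely satisfiable} \Rightarrow \text{approximately finitely satisfiable}$ are immediate from the definitions (for the second, observe that any model of $\{\varphi_i=0\colon i<n\}$ is also a model of $\{\varphi_i \leq \varepsilon\colon i<n\}$). The plan is therefore to prove the only non-trivial direction, namely that approximate finite satisfiability implies satisfiability, by the standard ultraproduct argument adapted to the approximate setting, using \L o\'s's theorem as a black box.

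The setup: enumerate the free variables of $\Sigma$ as $\bar x = (x_j)_{j \in J}$, and let $I$ be the directed set of pairs $(\Sigma_0,k)$ where $\Sigma_0 \subseteq \Sigma$ is finite and $k \geq 1$. For each $(\Sigma_0,k) \in I$, write $\Sigma_0 = \{\varphi_i = 0\colon i < n\}$; approximate finite satisfiability furnishes an $\cL$-structure $M_{\Sigma_0,k}$ and a tuple $\bar a_{\Sigma_0,k} \in M_{\Sigma_0,k}^J$ (using arbitrary values for those $x_j$ not appearing in $\Sigma_0$) with $\varphi_i^{M_{\Sigma_0,k}}(\bar a_{\Sigma_0,k}) \leq 1/k$ for all $i<n$. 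For each $(\Sigma_0',k') \in I$ define
\[
A_{\Sigma_0',k'} = \{(\Sigma_0,k) \in I \colon \Sigma_0' \subseteq \Sigma_0,\ k \geq k'\}.
\]
These sets have the finite intersection property since $A_{\Sigma_0',k'} \cap A_{\Sigma_0'',k''} \supseteq A_{\Sigma_0' \cup \Sigma_0'',\,\max(k',k'')}$, so they extend to an ultrafilter $\cU$ on $I$.

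Form the ultraproduct $N = \prod_{(\Sigma_0,k)} M_{\Sigma_0,k}/\cU$ and set $\bar a = [\bar a_{\Sigma_0,k}] \in N^J$. Fix a condition $\varphi = 0$ in $\Sigma$ and $\varepsilon > 0$. Choose $k'$ with $1/k' < \varepsilon$; then for every $(\Sigma_0,k) \in A_{\{\varphi=0\},\,k'}$ we have $\varphi^{M_{\Sigma_0,k}}(\bar a_{\Sigma_0,k}) \leq 1/k \leq 1/k' < \varepsilon$, and this latter set lies in $\cU$. By \L o\'s's theorem,
\[
\varphi^N(\bar a) = \lim_\cU \varphi^{M_{\Sigma_0,k}}(\bar a_{\Sigma_0,k}) \leq \varepsilon.
\]
Since $\varepsilon > 0$ was arbitrary, $\varphi^N(\bar a) = 0$; as $\varphi = 0$ was an arbitrary condition of $\Sigma$, we conclude $N \models \Sigma(\bar a)$, witnessing satisfiability.

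The proof is essentially routine once \L o\'s's theorem is available; the only real conceptual point, and the place where the approximate hypothesis is used, is the observation that the $\cU$-limit of values bounded by $1/k$ along the cofinal filter $A_{\{\varphi=0\},k'}$ produces the \emph{exact} value $0$ in the ultraproduct. This is precisely the phenomenon that makes approximate finite satisfiability sufficient and distinguishes the continuous compactness theorem from its Boolean prototype. Minor bookkeeping issues, such as making sense of a single tuple $\bar a_{\Sigma_0,k}$ when $\Sigma$ has infinitely many free variables or when some variable does not appear in $\Sigma_0$, are handled by the arbitrary extension step above and pose no genuine obstacle.
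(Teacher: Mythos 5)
Your proof is correct and follows essentially the same route as the paper's: build an ultrafilter on the directed set of (finite subset, tolerance) pairs and apply \L o\'s's theorem, concluding $\varphi^N(\bar a)\leq\varepsilon$ for every $\varepsilon>0$ and hence $\varphi^N(\bar a)=0$. The only cosmetic difference is that the paper first replaces free variables by fresh constants to reduce to sentential conditions, whereas you carry the tuple $\bar a_{\Sigma_0,k}$ through the ultraproduct directly; both are standard and equivalent.
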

\begin{proof}
  The proof is essentially the same as in discrete logic.
  Replacing free variables with new constant symbols we may assume all
  conditions are sentential.
  Enumerate $\Sigma = \{\varphi_i = 0\colon i < \lambda\}$, and let
  $I = \{(w,\varepsilon)\colon w \subseteq \lambda \text{ is finite  and } \varepsilon > 0\}$.
  For every $(w,\varepsilon) \in I$ choose $M_{\omega,\varepsilon}$ in which
  the conditions $\varphi_i \leq \varepsilon$ hold for $i \in w$.
  For $(w,\varepsilon) \in I$, let
  $J_{w,\varepsilon} = \{(w',\varepsilon') \in I\colon w' \supseteq w, \varepsilon' \leq \varepsilon\}$.
  Then the collection $\sU_0 = \{J_{w,\varepsilon}\colon (w,\varepsilon) \in I\}$ generates a proper
  filter on $I$ which may be extended to an ultrafilter $\sU$.
  Let $M = \prod M_{w,\varepsilon} / \sU$.
  By \L o\'s's Theorem we have $M \models \varphi_i \leq \varepsilon$ for every $i < \lambda$ and $\varepsilon>0$,
  so in fact $M \models \varphi_i = 0$.
  Thus $M \models \Sigma$.
\end{proof}

\begin{fct}[Tarski-Vaught Test]
  \label{fct:TVTest}
  Let $M$ be a structure, $A \subseteq M$ a closed subset.
  Then the following are equivalent:
  \begin{enumerate}
  \item The set $A$ is (the domain of) an elementary substructure of
    $M$: $A \preceq M$.
  \item For every formula $\varphi(y,\bar x)$ and every $\bar a \in A$:
    $$\inf \{\varphi(b,\bar a)^M\colon b \in M\} = \inf \{\varphi(b,\bar a)^M\colon b \in A\}.$$
  \end{enumerate}
\end{fct}
\begin{proof}
  One direction is by definition.
  For the other, we first verify that $A$ is a substructure of $M$,
  i.e., closed under the function symbols.
  Indeed, in order to show that $\bar a \in A \Longrightarrow f(\bar a) \in A$ we use
  the assumption for the formula $d(y,f(\bar a))$ and the fact that
  $A$ is complete.
  We then proceed to show that $\varphi(\bar a)^A = \varphi(\bar a)^M$ for all
  $\bar a \in A$ and formula $\varphi$ by induction on $\varphi$, as in the first
  order case.
\end{proof}

When measuring the size of a structure we will use its density
character (as a metric space), denoted $\|M\|$, rather than its
cardinality.

We leave the following results as an exercise to the reader:
\begin{fct}[Upward L\"owenheim-Skolem]
  Let $M$ be a non-compact structure (as a metric space).
  Then for every cardinal $\kappa$ there is an elementary extension $N \succeq M$
  such that $\|N\| \geq \kappa$.
\end{fct}

\begin{fct}[Downward L\"owenheim-Skolem]
  \label{fct:DLS}
  Let $M$ be a structure, $A \subseteq M$ a subset.
  Then there exists an elementary substructure $N \preceq M$ such that
  $A \subseteq N$ and $\|N\| \leq |A| + |\cL|$.
\end{fct}

\begin{fct}[Elementary chain]
  Let $\alpha$ be an ordinal and $(M_i\colon i < \alpha)$ an increasing chain of
  structures such that $i < j < \alpha \Longrightarrow M_i \preceq M_j$.
  Let $M = \bigcup_i M_i$.
  Then $M_i \preceq M$ for all $i$.
\end{fct}

\section{Types and definable predicates}
\label{sec:TypesDefPred}

We fix a continuous signature $\cL$, as well as a full system of
connectives (which might as well be $\{\lnot,\frac{x}{2},\dotminus\}$).

\subsection{Spaces of complete types}

Recall that a condition (or abbreviation thereof)
is something of the form $\varphi = 0$, $\varphi \leq r$ or $\varphi \geq r$
where $\varphi$ is a formula and $r \in [0,1]$ is dyadic.
\begin{dfn}
  \begin{enumerate}
  \item Let $M$ be a structure and $\bar a \in M^n$.
    We define the \emph{type of $\bar a$ in $M$}, denoted
    $\tp^M(\bar a)$ (or just $\tp(\bar a)$ when there is no ambiguity
    about the ambient structure), as the set of all conditions in
    $\bar x = x_{<n}$ satisfied in $M$ by $\bar a$.
  \item Observe that equality of types $\tp^M(\bar a) = \tp^N(\bar b)$
    is equivalent to the elementary equivalence
    $(M,\bar a) \equiv (N,\bar b)$ (where the tuples are named by new
    constant symbols).
    If the ambient structures
    are clear for the context we may shorten this to
    $\bar a \equiv \bar b$.
  \item A \emph{complete $n$-type}, or just an \emph{$n$-type}, is a
    maximal satisfiable set of conditions in the free variables $x_{<n}$.
    The space of all $n$-types is denoted $\tS_n$.
  \item If $p$ is an $n$-type and $\bar a \in M^n$ is such that
    $M \models p(\bar a)$, we say that $\bar a$ \emph{realises}
    $p$ (in $M$).
  \end{enumerate}
\end{dfn}

We start with a few easy observations whose proof we leave to the
reader.
\begin{lem}
  \label{lem:Types}
  \begin{enumerate}
  \item If $\bar a$ is an $n$-tuple (in some structure) then
    $\tp(\bar a)$ is a (complete) $n$-type.

    Conversely, every $n$-type can be obtained as the type of an
    $n$-tuple.
  \item Let $p \in \tS_n$ and $\varphi(x_{<n})$ be a formula.
    Then for every realisation $\bar a \models p$, the value of
    $\varphi(\bar a)$ depends only on $p$.
    It will be denoted $\varphi^p$ (the value of $\varphi$ according to $p$).

    Conversely, the mapping $\varphi \mapsto \varphi^p$, where $\varphi$ varies over all
    formulae in the variables $x_{<n}$, determines $p$.
 \end{enumerate}
\end{lem}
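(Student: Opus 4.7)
My plan is to deduce everything from the definitions together with the fact that for any dyadic $s \in [0,1]$ and formula $\varphi$, the abbreviations $\varphi \leq s$ and $\varphi \geq s$ are single conditions (namely $\varphi \dotminus s = 0$ and $s \dotminus \varphi = 0$), so that the real number $\varphi^M(\bar a)$ is pinned down, up to arbitrary dyadic precision, by conditions belonging to $\tp^M(\bar a)$.

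For part (1), the set $\tp^M(\bar a)$ is satisfiable (by $\bar a$ itself), so I only need maximality. Suppose $\Sigma \supsetneq \tp^M(\bar a)$ is satisfiable, and pick a condition $\psi = 0$ in $\Sigma \setminus \tp^M(\bar a)$. Then $r := \psi^M(\bar a) > 0$; choosing a dyadic $r' \in (0,r)$, the condition $\psi \geq r'$ is an abbreviation for a condition in $\tp^M(\bar a) \subseteq \Sigma$, so any realisation of $\Sigma$ would satisfy both $\psi = 0$ and $\psi \geq r' > 0$, contradicting satisfiability. For the converse, a complete type $p \in \tS_n$ is satisfiable by definition, so pick $\bar a \models p$ in some $M$; then $p \subseteq \tp^M(\bar a)$, and maximality of both forces equality.

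For part (2), suppose $\bar a_1 \models p$ in $M_1$ and $\bar a_2 \models p$ in $M_2$. If $r_1 := \varphi^{M_1}(\bar a_1) \neq r_2 := \varphi^{M_2}(\bar a_2)$, say $r_1 < r_2$, pick a dyadic $s$ with $r_1 < s < r_2$. Then the single condition $\varphi \leq s$ lies in $\tp^{M_1}(\bar a_1)$, which equals $p$ by part (1), so $\bar a_2 \models p$ would force $r_2 \leq s$, a contradiction; hence $\varphi^p$ is well-defined. Conversely, knowing $\varphi^p$ for every formula determines which conditions $\varphi = 0$ belong to $p$ (namely those with $\varphi^p = 0$), and by part (1) these conditions already exhaust $p$. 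The only genuinely delicate point is the dyadic-approximation trick, which is needed because conditions are restricted to the form $\varphi = 0$ with abbreviations available only at dyadic thresholds; everything else is bookkeeping.
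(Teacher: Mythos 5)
Your proof is correct, and the paper itself declines to give one (it explicitly leaves the lemma to the reader), so there is nothing in the source to compare against. The dyadic-threshold trick — using the abbreviations $\varphi \le s$ and $\varphi \ge s$ (i.e.\ $\varphi \dotminus s = 0$ and $s \dotminus \varphi = 0$) to pin down $\varphi^M(\bar a)$ to arbitrary precision inside $\tp^M(\bar a)$ — is exactly the right mechanism, and all the remaining steps (satisfiability of $\tp^M(\bar a)$, maximality via the contradiction between $\psi = 0$ and $\psi \ge r'$, and the deduction that any realisation of a complete type $p$ has type exactly $p$) are sound. One minor stylistic point: when you invoke ``which equals $p$ by part (1)'' in part (2), what you actually use is the maximality argument from your proof of part (1), namely that $p \subseteq \tp^{M_1}(\bar a_1)$ together with maximality of $p$ forces equality; it may be cleaner to state that observation once and cite it directly rather than attributing it to the statement of (1), which formally only asserts the converse direction.
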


Every formula $\varphi(x_{<n})$ defines a mapping
$p \mapsto \varphi^p\colon \tS_n \to [0,1]$.
With some abuse of notation this function will also be denoted
by $\varphi$.
This is legitimate, since it is clear that
two formulae $\varphi(\bar x)$ and $\psi(\bar x)$
are equivalent if and only if the functions
$\varphi,\psi\colon \tS_n \to [0,1]$ are equal.

We equip $\tS_n$ with the minimal topology in which all the
functions of this form are continuous, which
is sometimes called the \emph{logic topology}.

\begin{lem}
  \label{lem:CompactTypeSpace}
  With the topology given above,
  $\tS_n$ is a compact and Hausdorff space.
\end{lem}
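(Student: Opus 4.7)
The Hausdorff axiom is immediate from \fref{lem:Types}(2): if $p \neq q$ are distinct types, then some formula $\varphi$ witnesses $\varphi^p \neq \varphi^q$. Since $\varphi \colon \tS_n \to [0,1]$ is continuous by the very definition of the logic topology and $[0,1]$ is Hausdorff, we separate $p$ and $q$ by preimages of disjoint open intervals.

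For compactness, the plan is to invoke the Compactness Theorem \fref{cor:Cpt} through a finite-intersection argument. For a set of conditions $\Sigma$ in the variables $x_{<n}$, write $[\Sigma] = \{p \in \tS_n \colon \Sigma \subseteq p\}$, and for a single condition $s$ abbreviate $[s] = [\{s\}]$. The sub-basic closed sets for the logic topology, being complements of sets of the form $\{p \colon \varphi^p < r\}$ or $\{p \colon \varphi^p > r\}$, are exactly the sets $[s]$ for a single condition $s$. Moreover, finite unions of these are again of the form $[s]$ for a single condition, since for instance
\begin{equation*}
  [\varphi \leq r] \cup [\psi \leq s] \;=\; [(\varphi \dotminus r) \land (\psi \dotminus s) = 0],
\end{equation*}
using that $(a \land b) = 0$ iff $a = 0$ or $b = 0$. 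Consequently every closed subset of $\tS_n$ can be written as $[\Sigma]$ for some set of conditions $\Sigma$, and an arbitrary intersection $\bigcap_i [\Sigma_i]$ equals $[\bigcup_i \Sigma_i]$.

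Now suppose $\{[\Sigma_i]\}_{i \in I}$ is a family of (basic) closed sets with the finite intersection property. For any finite $J \subseteq I$, the intersection $\bigcap_{i \in J} [\Sigma_i] = [\bigcup_{i \in J} \Sigma_i]$ is nonempty; picking a realising type $p$ and a tuple $\bar a \models p$ shows that the finite set of conditions $\bigcup_{i \in J} \Sigma_i$ is satisfiable. Hence $\bigcup_{i \in I} \Sigma_i$ is finitely satisfiable, and \fref{cor:Cpt} supplies a structure $M$ and a tuple $\bar a \in M^n$ realising it. Then $\tp^M(\bar a) \in [\Sigma_i]$ for every $i$, so the full intersection is nonempty, proving compactness.

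The only non-routine point is recognising that closed sets in the logic topology coincide with the condition-defined sets $[\Sigma]$; this is essentially bookkeeping with the connectives, and once established the rest is a direct transcription of the classical Stone-space argument into the continuous setting, with \fref{cor:Cpt} playing the role of the compactness theorem for first-order logic.
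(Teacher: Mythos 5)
Your proof is correct but follows a different route than the paper's. The paper embeds $\tS_n$ into the compact Hausdorff product $[0,1]^{\cL(n)}$ via $p \mapsto (\varphi^p)_{\varphi}$, observes that the logic topology is by definition the subspace topology from this product (giving Hausdorffness for free), and then uses the Compactness Theorem only to show the image is closed in $[0,1]^{\cL(n)}$, so that compactness follows from Tychonoff. You instead argue directly with the finite-intersection criterion, after first identifying the closed sets of $\tS_n$ with condition-defined sets $[\Sigma]$. Both proofs bottom out in \fref{cor:Cpt}, but the paper's version is shorter and also exposes the useful embedding into a power of $[0,1]$; yours is closer in spirit to the classical Stone-space argument for type spaces over Boolean algebras and makes the role of conditions explicit. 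One small point to tidy up in your write-up: the identity $[\varphi \leq r] \cup [\psi \leq s] = [(\varphi \dotminus r) \land (\psi \dotminus s) = 0]$ requires $r$ and $s$ to be dyadic (so that $\varphi \dotminus r$ is actually a formula with the given connective system); you should note that the sub-basic open sets $[\varphi < r]$ with $r$ dyadic already generate the topology, so there is no loss in restricting to dyadic bounds.
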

\begin{proof}
  Let $\cL(n)$ be the family of all formulae in the variables
  $x_{<n}$, and consider the mapping $\tS_n \to [0,1]^{\cL(n)}$ given by
  $p \mapsto (\varphi^p\colon \varphi \in \cL(n))$.
  As observed in \fref{lem:Types}
  this mapping is injective, and the topology on
  $\tS_n$ is the topology induced on its image from the product
  topology on $[0,1]^{\cL(n)}$, which is Hausdorff.
  It follows from the Compactness Theorem (\fref{cor:Cpt}) that
  the image is closed in $[0,1]^{\cL(n)}$, so it is compact.
\end{proof}

Since we are only interested in formulae up to equivalence, it is
legitimate to identify a formula with such a mapping.
We know that in discrete first order logic the equivalence classes of
formulae are in bijection with clopen sets of types, i.e., with
continuous mappings from the type space to $\{T,F\}$.
Here we cannot claim as much, and it only holds up to uniform
approximations:
\begin{prp}
  \label{prp:TypeFuncApprox}
  A function $f\colon \tS_n \to [0,1]$ is continuous if and only if it can be
  uniformly approximated by formulae.
\end{prp}
\begin{proof}
  For right to left, we know that every formula defines a continuous
  mapping on the type space, and a uniform limit of continuous
  mappings is continuous.

  Left to right is a consequence of \fref{cor:SWCnct} and the
  fact that formulae separate types.
\end{proof}

Given a formula $\varphi(x_{<n})$ and $r \in [0,1]$, we define
$[\varphi < r]^{\tS_n} = \{p \in \tS_n\colon \varphi^p < r\}$.
We may omit $\tS_n$ from the notation when it may not cause ambiguity.
We could define $[\varphi > r]$ similarly, but as it is equal to
$[\lnot\varphi < 1-r]$ this would not introduce any new sets.
All sets of this form are clearly open in $\tS_n$.
Similarly, we define sets of the form $[\varphi \leq r]$, $[\varphi \geq r]$, which are
closed.
(Since $\varphi \leq r$ is a condition we can also characterise $[\varphi \leq r]$
as the set $\{p\colon \text{``$\varphi \leq r$''} \in p\}$).

\begin{lem}
  The family of sets of the form $[\varphi < r]$ forms a basis of open sets
  for the topology on $\tS_n$.
  Equivalently, the family of sets of the form $[\varphi \leq r]$ forms a basis
  of closed sets.

  Moreover, if $U$ is a neighbourhood of $p$, we can always find a
  formula $\varphi(x_{<n})$ such that
  $p \in [\varphi = 0] \subseteq [\varphi < 1/2] \subseteq U$.
\end{lem}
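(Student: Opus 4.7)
The topology on $\tS_n$ is by definition the coarsest one making each map $p \mapsto \varphi^p$ continuous. Since a subbasis of open sets of $[0,1]$ is given by the intervals $(-\infty, r)$ and $(r, \infty)$, and since $\varphi^{-1}((r, \infty)) = [\varphi > r] = [\lnot\varphi < 1 - r]$, the family $\{[\varphi < r] : \varphi \text{ a formula}, r \in [0,1]\}$ forms a subbasis. To upgrade it to a basis I would verify closure under pairwise intersections (the general case follows by induction). Given $p \in [\varphi < r] \cap [\psi < s]$, choose dyadic $r', s'$ with $\varphi^p < r' < r$ and $\psi^p < s' < s$. Setting $\chi = (\varphi \dotminus r') \lor (\psi \dotminus s')$ and $t = \min(r - r', s - s') > 0$, one has $\chi^p = 0 < t$, while any $q$ with $\chi^q < t$ satisfies $(\varphi \dotminus r')^q < r - r'$ and $(\psi \dotminus s')^q < s - s'$, forcing $\varphi^q < r$ and $\psi^q < s$. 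Thus $p \in [\chi < t] \subseteq [\varphi < r] \cap [\psi < s]$.

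For the \emph{moreover} clause, I would apply the basis property just established to find a formula $\varphi_0$ and $r \in [0,1]$ with $p \in [\varphi_0 < r] \subseteq U$. Choose a dyadic $c' \in (\varphi_0^p, r)$ and let $\chi_0 = \varphi_0 \dotminus c'$, so that $\chi_0^p = 0$ and $[\chi_0 < r - c'] \subseteq [\varphi_0 < r] \subseteq U$. It remains to rescale $\chi_0$ so that the small threshold $r - c'$ gets amplified to the constant $1/2$. Since multiplication by an arbitrary scalar is not available, I instead iterate $\dotplus$: pick $N \in \setN$ with $N \geq 1/(2(r-c'))$ and set $\varphi = \chi_0 \dotplus \chi_0 \dotplus \cdots \dotplus \chi_0$ ($N$ times). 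Because $\dotplus$ is truncated addition at $1$, we have $\varphi^q = \min(N \chi_0^q, 1)$ for every $q \in \tS_n$. Then $\varphi^p = 0$, and $\varphi^q < 1/2$ forces $\chi_0^q < 1/(2N) \leq r - c'$, hence $q \in U$. This gives $p \in [\varphi = 0] \subseteq [\varphi < 1/2] \subseteq U$.

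The equivalent formulation for closed sets follows by taking complements: since $\tS_n \setminus [\varphi < r] = [\varphi \geq r] = [\lnot \varphi \leq 1 - r]$, the family $\{[\varphi \leq r]\}$ consists precisely of complements of basic open sets, and is therefore a basis of closed sets by the standard duality.

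The main obstacle is the rescaling step in the \emph{moreover} part: the restricted algebra of connectives $\{\lnot, \frac{x}{2}, \dotminus\}$ does not allow multiplication by an arbitrary positive scalar, so one must use an $N$-fold $\dotplus$ to amplify a small neighbourhood $[\chi_0 < r - c']$ into the canonical form $[\varphi < 1/2]$ while preserving the condition $\varphi^p = 0$.
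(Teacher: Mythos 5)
Your proof is correct but takes a genuinely different route from the paper's. The paper proves the ``moreover'' clause first and observes that the basis statements follow from it: it applies Urysohn's Lemma on the compact Hausdorff space $\tS_n$ to obtain a continuous $f\colon \tS_n \to [0,1]$ with $f(p)=0$ and $f\rest_{U^c}=1$, invokes the earlier proposition that continuous functions on $\tS_n$ are uniform limits of formulae to find $\varphi_0$ with $|f-\varphi_0|\leq 1/4$, and then sets $\varphi = \varphi_0 \dotminus 1/4$. You go the other way around: you first establish the basis property by a direct syntactic argument (the sets $[\varphi<r]$ are a subbasis by definition of the logic topology together with $[\varphi>r]=[\lnot\varphi<1-r]$; closure under finite intersections is witnessed by $\chi=(\varphi\dotminus r')\lor(\psi\dotminus s')$), and then derive the ``moreover'' clause from the basis by an explicit rescaling. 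The rescaling step is sound: since $\dotplus$ is truncated addition, the $N$-fold sum $\chi_0\dotplus\cdots\dotplus\chi_0$ does evaluate to $\min(N\chi_0,1)$, which vanishes at $p$ and has the threshold $1/(2N)\leq r-c'$ at level $1/2$, so $[\varphi<1/2]\subseteq[\chi_0<r-c']\subseteq[\varphi_0<r]\subseteq U$. The trade-off is that your argument is more elementary and self-contained, avoiding Urysohn and the Stone--Weierstrass-flavoured density result, at the cost of being somewhat longer; the paper's argument is a two-line consequence of machinery it has already built.
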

\begin{proof}
  We prove the moreover part, which clearly implies the rest.
  Assume that $p \in U \subseteq \tS_n$ and $U$ is open.
  By Urysohn's Lemma there is a continuous function $f\colon \tS_n \to [0,1]$
  such that $f(p) = 0$ and $f\rest_{U^c} = 1$.
  We can then find a formula $\varphi_0$ such that $|f - \varphi_0| \leq 1/4$.
  Then the formula $\varphi = \varphi_0 \dotminus 1/4$ would do.
\end{proof}

\subsection{Definable predicates}

The discussion above, which is semantic in nature, should convince the
reader that uniform limits of formulae are interesting objects, which
we would like to call \emph{definable predicates}.
But, as formulae are first defined syntactically and only later
interpreted as truth value mappings from structures or from type
spaces, it will be more convenient later on to first define definable
predicates syntactically.
Since uniform convergence of the truth values is a semantic notion it
cannot be brought into consideration on the syntactic level, so we use
instead a trick we call \emph{forced convergence}.
This will be particularly beneficial later on when we need to consider
sequences of formulae which sometimes (i.e., in some structures, or
with some parameters) converge, and sometimes do not.

Forced conversion is first of all an operation on sequences in
$[0,1]$, always yielding a number in $[0,1]$.
The forced limit coincides with the limit if the sequence converges
fast enough.
More precisely, if the sequence converges fast enough,
we take its limit; otherwise, we find the closest sequence which does
converge fast enough, and take \emph{its} limit.
Formally:
\begin{dfn}
  Let $(a_n\colon n < \omega)$ be a sequence in $[0,1]$.
  We define a
  modified sequence $(a_{\flim,n}\colon n < \omega)$ by induction:
  \begin{align*}
    a_{\flim,0} & = a_0 \\
    a_{\flim,n+1} & =
    \begin{cases}
      a_{\flim,n} + 2^{-n-1} & a_{\flim,n} + 2^{-n-1} \leq a_{n+1} \\
      a_{n+1} &  a_{\flim,n} - 2^{-n-1} \leq a_{n+1} \leq a_{\flim,n} + 2^{-n-1} \\
      a_{\flim,n} - 2^{-n-1} & a_{\flim,n} - 2^{-n-1} \geq a_{n+1}
    \end{cases}
  \end{align*}
  The sequence $(a_{\flim,n}\colon n < \omega)$ is always a Cauchy sequence,
  satisfying $n \leq m < \omega \Longrightarrow |a_{\flim,n} - a_{\flim,m}| \leq 2^{-n}$.

  We define the \emph{forced limit} of the original sequence
  $(a_n\colon n < \omega)$ as:
  $$\flim_{n\to\infty} a_i \eqdef \lim_{n\to\infty} a_{\flim,n}.$$
\end{dfn}

\begin{lem}
  \label{lem:FLim}
  The function $\flim\colon [0,1]^\omega \to [0,1]$ is continuous, and if
  $(a_n\colon n < \omega)$ is a sequence such that $|a_n - a_{n+1}| \leq 2^{-n}$
  for all $n$ then $\flim a_n = \lim a_n$.

  In addition, if $a_n \to b \in [0,1]$ fast enough so that
  $|a_n - b| \leq 2^{-n}$ for all $n$, then $\flim a_n = b$.
\end{lem}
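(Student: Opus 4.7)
The plan is to handle the three clauses in turn. For continuity, I would first rewrite the recursion as
\[
  a_{\flim,n+1} = \operatorname{med}\!\bigl(a_{\flim,n} - 2^{-n-1},\; a_{n+1},\; a_{\flim,n} + 2^{-n-1}\bigr),
\]
making $a_{\flim,n+1}$ manifestly continuous in $(a_{\flim,n}, a_{n+1})$. An easy induction then shows that each projection $\pi_n\colon (a_k) \mapsto a_{\flim,n}$ is a continuous function of $(a_0,\dots,a_n)$ on $[0,1]^{\omega}$. The telescoped bound $|a_{\flim,n} - a_{\flim,m}| \leq 2^{-n}$ (each step moves by at most $2^{-n-1}$) passes to $|\pi_n - \flim| \leq 2^{-n}$ pointwise, so $\flim$ is the uniform limit of the $\pi_n$ and is continuous.

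For the second clause the natural approach is to prove the pointwise identity $a_{\flim,n} = a_n$ by induction on $n$: the base case is immediate from the definition, and for the step, if $a_{\flim,n} = a_n$ then $|a_{n+1} - a_{\flim,n}| = |a_{n+1} - a_n|$ is controlled by the hypothesis, placing us in the middle branch of the recursion, whence $a_{\flim,n+1} = a_{n+1}$. The conclusion $\flim a_n = \lim a_n$ follows (the limit exists since the hypothesis forces the sequence to be Cauchy).

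For the final clause I would set $x_n := a_{\flim,n} - b$ and prove by induction that $|x_n| \leq 2^{-n}$, which yields $x_n \to 0$. The base case is trivial. For the inductive step---and this is the main obstacle---the three-way definition must be read as secretly encoding the sign of $x_n$: in the upper branch the inequality $a_{\flim,n} + 2^{-n-1} \leq a_{n+1}$ combined with the hypothesis $a_{n+1} \leq b + 2^{-n-1}$ forces $a_{\flim,n} \leq b$, i.e., $x_n \leq 0$, and then the update $x_{n+1} = x_n + 2^{-n-1}$ satisfies $|x_{n+1}| \leq 2^{-n-1}$ via the inductive bound; the lower branch is symmetric; and the middle branch gives $x_{n+1} = a_{n+1} - b$ with $|x_{n+1}| \leq 2^{-n-1}$ immediately. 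The point is that the naive triangle inequality $|x_{n+1}| \leq |x_n| + 2^{-n-1}$ only grows, so one must extract sign information from the case split rather than treating the step as a general perturbation.
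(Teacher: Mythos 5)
Your continuity argument and your proof of the final clause are both correct. For continuity, writing $\flim$ as the uniform limit of the projections $\pi_n\colon (a_k) \mapsto a_{\flim,n}$ is a clean alternative to the paper's direct modulus-of-continuity estimate ($|a_n - b_n| < 2^{-m}$ for $n \leq m$ forces $|\flim a_n - \flim b_n| < 3\cdot 2^{-m}$); both routes work. For the final clause, the induction you describe is exactly what the paper leaves as an exercise, and your observation that the case split secretly encodes the sign of $x_n$ is the right key: without it the naive bound $|x_{n+1}| \leq |x_n| + 2^{-n-1}$ only grows, as you note.

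There is, however, a genuine gap in your treatment of the middle clause. You claim that if $a_{\flim,n} = a_n$, then $|a_{n+1} - a_{\flim,n}| = |a_{n+1} - a_n|$ is "controlled by the hypothesis, placing us in the middle branch." But the hypothesis gives only $|a_{n+1} - a_n| \leq 2^{-n}$, whereas the middle branch requires $|a_{n+1} - a_{\flim,n}| \leq 2^{-n-1}$; the factor of two is exactly wrong, and the induction does not close. In fact, under the stated hypothesis the clause is false: take $a_0 = 0$ and $a_n = 2^{1-n}$ for $n \geq 1$, so $|a_n - a_{n+1}| = 2^{-n}$ and $\lim a_n = 0$, yet one computes $a_{\flim,n} = (2^{n-2}+1)\cdot 2^{-n}$ for $n \geq 2$, giving $\flim a_n = 1/4 \neq 0$. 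The hypothesis intended here is presumably $|a_n - a_{n+1}| \leq 2^{-n-1}$, under which your induction goes through verbatim — and under which the clause also follows directly from the third one, since then $|a_n - \lim a_m| \leq \sum_{k \geq n} 2^{-k-1} = 2^{-n}$.
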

\begin{proof}
  Continuity follows from the fact that if $(a_n)$ and $(b_n)$ are two
  sequences, and for some $m$ we have
  $|a_n - b_n| < 2^{-m}$ for all $n \leq m$, then we can
  show by induction that
  $|a_{\flim,n} - b_{\flim,n}| < 2^{-m}$ for all $n \leq m$, whereby
  $|\flim a_n - \flim b_n| < 3\cdot2^{-m}$.

  For the second condition, one again shows by induction that
  $|a_{\flim,n} - b| < 2^{-n}$ whence the conclusion.
\end{proof}

We may therefore think of $\flim$ as an \emph{infinitary} continuous
connective.

\begin{dfn}
  A \emph{definable predicate} is a forced limit of a sequence of
  formulae, i.e., an (infinite) expression of the form
  $\flim_{n\to\infty} \varphi_n$.

  We say that a variable $x$ is free in $\flim \varphi_n$ if it is free in
  any of the $\varphi_n$.
\end{dfn}

Note that a definable predicate may have infinitely (yet countably)
many free variables.
In practise, we will mostly consider forced limits of formulae with a
fixed (finite) tuple of free variables, but possibly with
parameters, so the limit might involve infinitely many
parameters.
We may write such a definable predicate as
$\psi(\bar x,B) = \flim \varphi_n(\bar x,\bar b_n)$, and say that
$\psi(\bar x,B)$ is obtained from
$\psi(\bar x,Y)$ ($= \flim \varphi_n(\bar x,\bar y_n)$)
by substituting the infinite
tuple of parameters $B = \bigcup \bar b_n$ in place of
the parameter variables $Y = \bigcup \bar y_n$.
(Another way to think about this is to view parameter variables as
constant symbols for which we do not have yet any interpretation in
mind: then indeed all the free variables are in $\bar x$.)
Later on in \fref{sec:Img} we will construct canonical parameters
for such instances of $\psi(\bar x,Y)$.
We shall see then that dealing with infinitely many parameters (or
parameter variables) is not too difficult as long as the tuple
$\bar x$ of actually free variable is finite.
So from now on, we will only consider definable predicates in
finitely many variables.

The semantic interpretation of definable predicates is as expected:
\begin{dfn}
  Let $\psi(\bar x) = \flim \varphi_n(\bar x)$ be a definable predicate in
  $\bar x = x_{<m}$.
  Then for every structure $M$ and $\bar a \in M^m$, we define
  $\psi^M(\bar a) = \flim \varphi_n^M(\bar a)$.
  Similarly, for $p \in \tS_m$ we define
  $\psi^p = \flim \varphi_n^p$.
\end{dfn}
As with formulae, if $\bar a \in M$ realises $p \in \tS_m$, then
$\psi^M(\bar a) = \psi^p$.
Therefore, two definable predicates $\psi(\bar x)$ and $\chi(\bar x)$
are equivalent (i.e., $\psi^M = \chi^M$ for all $M$)
if and only if they are equal as functions of types.

In the same manner we tend to identify two equivalent formulae, we
will tend to identify two equivalent definable predicates.
Thus, rather than viewing definable as infinite syntactic objects we
will rather view them as \emph{semantic} objects,
i.e., as continuous functions on the corresponding type space.
If one insists on making a terminological distinction
then the syntactic notion of a forced limit of
formulae could be called a \emph{limit formula}.

\begin{prp}
  The continuous functions $\tS_m \to [0,1]$ are precisely those given
  by definable predicates in $m$ free variables.
\end{prp}
\begin{proof}
  Let $\psi(x_{<m}) = \flim \varphi_n$ be a definable predicate.
  Then the mapping $p \mapsto (\varphi_n^p\colon n < \omega)$ is continuous, and composed
  with the continuous mapping $\flim\colon [0,1]^\omega \to [0,1]$ we
  get $\psi\colon \tS_m \to [0,1]$, which is therefore continuous.

  Conversely, let $f\colon \tS_m \to [0,1]$ be continuous.
  Then by \fref{prp:TypeFuncApprox} $f$
  can be uniformly approximated by formulae: for every $n$ we
  can find $\varphi_n(x_{<m})$ such that $|f - \varphi_n| \leq 2^{-n}$.
  Then $f = \lim \varphi_n = \flim \varphi_n$.
\end{proof}

Viewed as continuous functions of types, it is clear that the family
of definable predicates is closed under all continuous combinations,
finitary or not (so in particular under the infinitary
continuous connective $\flim$), as well as under uniform limits.
Finally, let $\psi(\bar x,y)$ be a continuous predicate, and find
formulae $\varphi_n(\bar x,y)$ that converge to $\psi$ uniformly on $\tS_{m+1}$.
Then the sequence $\sup_y \varphi_n$ converges uniformly on $\tS_m$ to a
definable predicate which will be denoted $\sup_y \psi$ (this is since
we always have $|\sup_y \varphi_n - \sup_y \varphi_k| \leq |\varphi_n - \varphi_k|$).
Furthermore, if follows from the uniform convergence $\varphi_n \to \psi$ that
for every structure $M$ and $\bar a \in M$ we have
$(\sup_y \psi)^M(\bar a) = \sup \{\psi^M(\bar a,b)\colon b \in M\}$.
The same holds with $\inf$, and we conclude that the family of
definable predicates is closed under continuous quantification.
Also, since all formulae are uniformly continuous with respect to the
metric $d$, so are their uniform limits.
This means, for example, that every definable predicate can be added
to the language as a new actual predicate symbol.

We conclude with a nice consequence of the forced limit construction,
which might not have been obvious if we had merely defined definable
predicates as uniform limits of formulae.
\begin{lem}
  \label{lem:DefPredInStr}
  Let $M$ be a structure, and let $(\varphi_n(\bar x)\colon n < \omega)$ be a sequence
  of formulae, or even definable predicates, such that
  the sequence of functions $(\varphi_n^M\colon n < \omega)$ converges uniformly to some
  $\xi\colon M^m \to [0,1]$ (but need not converge at all for
  any other structure instead of $M$).

  Then there is a definable predicate $\psi(\bar x)$ such that
  $\psi^M = \xi = \lim \varphi_n^M$ (i.e., $\xi$ is definable in $M$).
\end{lem}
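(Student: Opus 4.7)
The plan is to exploit the robustness of the forced-limit construction: on $M$ we have fast enough convergence for $\flim$ to agree with $\lim$, while off $M$ the forced limit only guarantees that we still get a legitimate definable predicate. So even if the sequence diverges wildly on other structures, $\flim$ cleans it up syntactically and correctly reproduces $\xi$ on $M$.

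Concretely, I would first pass to a subsequence that converges uniformly to $\xi$ fast. Since $\varphi_n^M \to \xi$ uniformly on $M^m$, I can choose indices $n_0 < n_1 < \cdots$ such that
\begin{equation*}
  \sup_{\bar a \in M^m} \bigl|\varphi_{n_k}^M(\bar a) - \xi(\bar a)\bigr| \leq 2^{-k-1}
  \quad\text{for every } k < \omega.
\end{equation*}
In particular $|\varphi_{n_k}^M(\bar a) - \varphi_{n_{k+1}}^M(\bar a)| \leq 2^{-k}$ uniformly on $M^m$.

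Next, set $\psi(\bar x) = \flim_{k\to\infty} \varphi_{n_k}(\bar x)$. Since the family of definable predicates is closed under the infinitary continuous connective $\flim$ (as noted just before the lemma), $\psi$ is itself a definable predicate; this works uniformly whether the $\varphi_n$ are honest formulae or already definable predicates.

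Finally, evaluate in $M$. For each $\bar a \in M^m$ the sequence $(\varphi_{n_k}^M(\bar a))_{k<\omega}$ satisfies $|\varphi_{n_k}^M(\bar a) - \xi(\bar a)| \leq 2^{-k}$, so by the second clause of \fref{lem:FLim} we have
\begin{equation*}
  \psi^M(\bar a) = \flim_{k\to\infty} \varphi_{n_k}^M(\bar a) = \xi(\bar a),
\end{equation*}
which gives $\psi^M = \xi$ as required. There is essentially no obstacle: the only subtle point is to make the subsequence converge rapidly enough on $M$ so that the forced limit coincides with the honest limit there, and this is exactly what uniform convergence of $\varphi_n^M$ to $\xi$ allows.
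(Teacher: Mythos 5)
Your proof is correct and is essentially the paper's own argument: pass to a subsequence so that $|\varphi_{n_k}^M - \xi| \leq 2^{-k}$, then apply the forced limit and invoke the second clause of \fref{lem:FLim} to conclude $(\flim \varphi_{n_k})^M = \xi$. The paper states this more tersely, but the underlying idea — uniform convergence lets you make the rate fast enough on $M$ for $\flim$ to agree with $\lim$ there, while $\flim$ remains a bona fide definable predicate everywhere — is exactly the same.
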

\begin{proof}
  Up to passing to a sub-sequence, we may assume that
  $|\varphi_n^M - \xi| \leq 2^{-n}$, so $\xi = \flim \varphi_n^M = (\flim \varphi_n)^M$.
  would do.
\end{proof}

\subsection{Partial types}

\begin{dfn}
  A \emph{partial type} is a set of conditions, usually in a
  finite tuple of variables.
  (Thus every complete type is in particular a partial type.)
\end{dfn}

For a partial type $p(x_{<n})$ we define:
$$[p]^{\tS_n} = \bigcap_{s \in p} [s]^{\tS_n} = \{q \in \tS_n\colon p \subseteq q\}.$$
Since the sets of the form $[s]$ (where $s$ is a condition in
$x_{<n}$) form a basis of closed sets for the topology on $\tS_n$,
the sets of the form $[p]$ (where $p(x_{<n})$ is a partial type)
are precisely the closed subsets of $\tS_n$.

For every $n,m < \omega$ we have a natural restriction mapping
$\pi\colon \tS_{n+m} \to \tS_n$.
This mapping is continuous, and therefore closed (as is every
continuous mapping between compact Hausdorff spaces).
Let $p(x_{<n},y_{<m})$ be a partial type, defining a closed subset
$[p] \subseteq \tS_{n+m}$.
Then $\pi([p]) \subseteq \tS_n$ is closed as well, and therefore of the form
$[q(x_{<n})]$.

\begin{dfn}
  Let $p(\bar x,\bar y)$ be a partial type.
  We define $\exists\bar y\, p(\bar x,\bar y)$ to be any partial type
  $q(\bar x)$ (say the maximal one) satisfying
  $[q]^{\tS_n} = \pi([p]^{\tS_{n+m}})$.
\end{dfn}

By the previous argument we have:
\begin{fct}
  For every partial type $p(\bar x,\bar y)$, a partial type
  $\exists\bar y\,p(\bar x,\bar y)$ exists.
  Moreover, if $M$ is structure and $\bar a \in M$ then
  $M \models \exists\bar y\,p(\bar a,\bar y)$ if and only if
  there is $N \succeq M$ and $\bar b \in N$ such that
  $N \models p(\bar a,\bar b)$.
  In case $M$ is $\omega$-saturated
  (i.e., if every $1$-type over a finite tuple in $M$ is realised in
  $M$) then such $\bar b$ exists in $M$.
\end{fct}

\section{Theories}
\label{sec:Theories}

\begin{dfn}
  A \emph{theory} is a set of sentential conditions,
  i.e., things of the form $\varphi = 0$, where $\varphi$ is a sentence.
\end{dfn}
Thus in some sense a theory is an ``ideal''.
Since every condition of the form $\varphi \leq r$, $\varphi \geq r$ or $\varphi = r$ is
logically
equivalent to one of the form $\varphi' = 0$ (if $r$ is dyadic) or to a set
of such conditions (for any $r \in [0,1]$), we may allow ourselves
conditions of this form as well.
It should be noted however that most theories are naturally
axiomatised by conditions of the form $\varphi = 0$.

\begin{dfn}
  Let $T$ be a theory.
  A \emph{(pre-)model} of $T$ is an $\cL$-(pre-)structure $M$ in which
  $T$ is satisfied.
\end{dfn}
The notions of satisfaction and satisfiability of sets of conditions
from \fref{dfn:CondSat} apply in the special case of a theory (a
set of conditions without free variables).
In particular, a theory is satisfiable if and only if it has a model,
and by \fref{prp:cmplt}, this is the same as having a
pre-model.

A theory is \emph{complete} if it is satisfiable and maximal as such
(i.e., if it is a complete $0$-type), or at least if its set of
logical consequences is.
The complete theories are precisely those obtained as theories of
structures:
\begin{align*}
  \Th(M)
  & = \{\varphi = 0\colon \varphi \text{ an $\cL$-sentence and } \varphi^M = 0\} \\
  & \equiv \{\varphi = \varphi^M\colon \varphi \text{ an $\cL$-sentence}\}.
\end{align*}
(In the second line we interpret $\varphi = r$ as an abbreviation for a set of
conditions as described earlier.)

\subsection{Some examples of theories}
Using the metric, any equational theory (in the ordinary sense) can be
expressed as a theory, just replacing $x = y$ with $d(x,y) = 0$.
\begin{exm}
  Consider probability algebras (i.e., measure algebras, as discussed
  for example in \cite{Fremlin:MeasureTheoryVol3}, with total measure $1$).
  The language is $\cL = \{0,1,{}^c,\land,\lor,\mu\}$, with all continuity
  moduli being the identity.
  The theory of probability algebras, denoted $PrA$, consists of the
  following axioms:
  \begin{align*}
    & \langle\textit{equational axioms of Boolean algebras}\rangle\\
    & \mu(1) = 1\\
    & \mu(0) = 0\\
    & \forall xy\, \big( \mu(x)+\mu(y)=\mu(x\lor y)+\mu(x\land y) \big)\\
    & \forall xy\, \big( d(x,y)=\mu((x\land y^c)\lor(y\land x^c)) \big)
  \end{align*}
  The last two axioms are to be understood in the sense of
  \fref{ntn:UnivCond}.
  Thus $\forall xy\, \mu(x)+\mu(y)=\mu(x\lor y)+\mu(x\land y)$ should be understood as
  $\sup_{xy}\, |\half[\mu(x)+\mu(y)]-\half[\mu(x\lor y)+\mu(x\land y)]| = 0$, etc.
  In the last expression, division by two is necessary to keep the
  range in $[0,1]$.
  As we get used to this we will tend to omit it and simply write
  $\sup_{xy}\, |\mu(x)+\mu(y)-\mu(x\lor y)-\mu(x\land y)| = 0$.

  Note that we cannot express $\mu(x)=0\to x=0$, but we do not have to
  either: if $M$ is a model, $a \in M$, and $\mu^M(a) = 0$, then the axioms
  imply that $d^M(a,0^M) = 0$, whereby $a = 0^M$.

  The model companion of $PrA$ is $APA$, the theory of atomless
  probability algebras,
  which contains in addition the following sentence:
  \begin{align*}
    & \forall x\exists y\, \left( \mu(y\land x)=\half[\mu(x)] \right),
  \end{align*}
  Following \fref{rmk:AEntn} we can express this by:
  \begin{align*}
    & \sup_x \inf_y \left| \mu(y\land x)-\half[\mu(x)] \right| = 0
  \end{align*}
  (We leave it to the reader to verify that this sentential condition
  does indeed if and only if the probability algebra is atomless.)
\end{exm}

\begin{exm}[Convex spaces]
  Let us now consider a signature $\cL_{cvx}$ consisting of binary
  function symbols $c_\lambda(x,y)$ for all dyadic numbers $\lambda \in [0,1]$,
  which are all, say, $1$-Lipschitz in both arguments.
  Let $T_{cvx}$ consist of:
  \begin{align*}
    & (\forall xyz)\, d(z,c_\lambda(x,y)) \leq \lambda d(z,x) + (1-\lambda)d(z,y) \\
    & (\forall xyz)\, c_{\lambda_0+\lambda_1}(c_{\frac{\lambda_0}{\lambda_0+\lambda_1}}(x,y),z)
    = c_{\lambda_1+\lambda_2}(c_{\frac{\lambda_1}{\lambda_1+\lambda_2}}(y,z),x) && \lambda_0+\lambda_1+\lambda_2 = 1 \\
    & (\forall xyz)\, d(c_\lambda(x,z),c_\lambda(y,z)) = \lambda d(x,y).
  \end{align*}
  By \cite{mach:CnvxChr}, the models of $T_{cvx}$ are
  precisely the closed convex subspaces of Banach spaces of diameter
  $\leq 1$, equipped with the
  convex combination operations $c_\lambda(x,y) = \lambda x + (1-\lambda)y$.
  We may in fact restrict to a single function symbol $c_{1/2}(x,y)$,
  since every other convex combination operator with dyadic
  coefficients can be expressed using this single operator.
  Since our structures
  are by definition complete, dyadic convex combinations suffice.
\end{exm}

\begin{exm}
  \label{exm:BanachUnitBall}
  Let us continue with the previous example.
  We may slightly modify our logic allowing the distance symbol to
  have values in the compact interval $[0,2]$, so now models of
  $T_{cvx}$ are convex sets of diameter $\leq 2$.
  Let us add a constant symbol $0$, introduce $\|x\|$ as shorthand for
  $d(x,0)$ and $\lambda x$ as shorthand for $c_\lambda(x,0)$, and add the axioms:
  \begin{align*}
    & (\forall x)\, \|x\| \leq 1 \\
    & (\forall x\exists y) \, d(x,y/2)
    \land
    (1/2 \dotminus \|x\|) = 0
  \end{align*}
  The first axiom tells us that our model is a convex subset of the
  unit ball of the ambient Banach space.
  The second tells us that our model is precisely the unit ball:
  if $\|x\| \leq 1/2$ then $2x$ exists.
  One can add more structure on top of this, for example:
  \begin{enumerate}
  \item Multiplication by $i$, rendering the ambient space a complex
    Banach space.
  \item Function symbols $\lor$ and $\land$, rendering the ambient space a
    Banach lattice.
    In order to make sure we remain inside the unit ball, we actually
    need to add $(x,y) \mapsto (x\land y)/2$ and $(x,y) \mapsto (x\lor y)/2$ rather than
    $\lor$ and $\land$.
    In particular the model-theoretic study of independence in
    $L_p$ Banach lattices carried out in \cite{BenYaacov-Berenstein-Henson:LpBanachLattices} fits
    in this setting.
  \end{enumerate}
\end{exm}

Alternatively, in order to study a Banach space $E$ one could
introduce a multi-sorted structure where there is a sort $E_n$
for each closed ball around $0$ in $E$ of radius $n < \omega$.
On each sort, all the predicate symbols have values in a
compact interval, and operations such as $+$ go from
$E_n \times E_m$ to $E_{n+m}$.
However, since every sort $E_n$ is isomorphic to $E_1$ up to
rescaling, this boils down to the single-sorted approach described
above (and in particular, re-scaled addition is indeed the
convex combination operation).

It can be shown that either approach (single unit ball sort or
a sort for each radius) has the same power of expression as Henson's
logic, i.e., the translation from a Banach space structure \`a la
Henson to a unit ball structure (in an appropriate signature)
preserves such notions as elementary classes and extensions,
type-definable subsets of the unit ball, etc.
This should be intuitively clear from \fref{rmk:AEntn}.
Thus, continuous first order logic is indeed as good a setting for the
study of such properties as stability and independence in Banach space
structures as Henson's logic, but as we show later it is
much better adapted for such study.

The reader may find in \cite{BenYaacov:Perturbations} a treatment of
unbounded continuous signatures and structures,
an approach much closer
in spirit to Henson's treatment of Banach space structures.
It is proved there that approximate satisfaction of
positive bounded formulae (which makes sense in
any unbounded continuous signature) has the
same power of expression as satisfaction of conditions of
continuous first order logic.
The equivalence mentioned in the previous paragraph follows.
In addition, the single point compactification method defined there turns
such unbounded structures into bounded structures as studied here without
chopping them into pieces as above (and again, preserving such notions
as elementarity and definability).

\subsection{Type spaces of a theory}

If $T$ is a theory, we define its type spaces as in classical first
order logic:
\begin{align*}
  & \tS_n(T) = \{p \in \tS_n\colon T \subseteq p\} =
  \{\tp^M(\bar a)\colon M \models T \text{ and } \bar a \in M^n\}.
\end{align*}
This is a closed subspace of $\tS_n$, and therefore compact and
Hausdorff in the induced topology.
We define $[\varphi \leq r]^{\tS_n(T)} = [\varphi \leq r]^{\tS_n} \cap \tS_n(T)$, and
similarly for $\varphi = r$, etc.\
As before, we may omit $\tS_n(T)$ if the ambient type
space in question is clear.

If $M$ is a structure and $A \subseteq M$, we define $\cL(A)$ by adding
constant symbols for the elements of $A$ and identify $M$ with its
natural expansion to $\cL(A)$.
We define $T(A) = \Th_{\cL(A)}(M)$ and $\tS_n(A) = \tS_n(T(A))$, the
latter being the space of $n$-types over $A$.

This definition allows a convenient re-statement of the Tarski-Vaught
Test:
\begin{fct}[Topological Tarski-Vaught Test]
  \label{fct:TopTVTest}
  Let $M$ be a structure, $A \subseteq M$ a closed subset.
  Then the following are equivalent:
  \begin{enumerate}
  \item The set $A$ is (the domain of) an elementary substructure of
    $M$: $A \preceq M$.
  \item The set of realised types
    $\{\tp^M(a/A)\colon a \in A\}$ is dense in $\tS_1(A)$.
  \end{enumerate}
\end{fct}
\begin{proof}
  We use \fref{fct:TVTest}.
  Assume first that the set of realised types is dense.
  Let $\varphi(y,\bar a) \in \cL(A)$, $r = \inf_y \varphi(y,\bar a)^M \in [0,1]$.
  Then for every $\varepsilon > 0$ the set
  $[\varphi(y,\bar a) < r+\varepsilon] \subseteq \tS_1(A)$ is open and non-empty, so there is
  $b \in A$ such that $\varphi(b,\bar a)^M < r+\varepsilon$, whence condition (ii) of
  \fref{fct:TVTest}.
  Conversely, since the sets of the form $[\varphi(y,\bar a) < r]$ (with
  $\bar a \in A$)
  form a basis of open sets for $\tS_1(A)$, condition (ii) of
  \fref{fct:TVTest} implies the realised types are dense.
\end{proof}

By previous results, a uniform limit (or forced limit) of formulae
with parameters in $A$ is the same (as functions on $M$, or on any
elementary extension of $M$) as a continuous mapping
$\varphi\colon \tS_n(A) \to [0,1]$.
Such a definable predicate with parameters in $A$ is called a
definable predicate \emph{over $A$}, or an
\emph{$A$-definable} predicate.

We define $\kappa$-saturated and (strongly) $\kappa$-homogeneous structures
as usual, and show that every complete theory admits a monster model,
i.e., a $\kappa$-saturated and strongly $\kappa$-homogeneous model for some $\kappa$
which is far larger than the cardinality of any other set under
consideration.
It will be convenient to assume that there is always an ambient
monster model: every set of parameters we
consider is a subset of a monster model, and every model we consider
is an elementary substructure thereof.
(Even when considering an incomplete theory, each model of the theory
embeds in a monster model of its complete theory.)

If $\fM$ is a monster model and $A \subseteq \fM$ a (small) set, we
define
$$\Aut(\fM/A) = \{f \in \Aut(\fM)\colon f\rest_A = \id_A\}.$$
A definable predicate with parameters (in $\fM$) is
\emph{$A$-invariant} if it is fixed by all $f \in \Aut(\fM/A)$.

All type spaces we will consider in this paper are
quotient spaces of $\tS_n(\fM)$, where $\fM$ is a fixed monster
model of $T$, or of $\tS_n(T)$, which can be obtained using
the following general fact:
\begin{fct}
  \label{fct:FuncQuot}
  Let $X$ be a compact Hausdorff space and $\fA \subseteq C(X,[0,1])$ any
  sub-family of functions.
  Define an equivalence relation on $X$ by $x \sim y$ if $f(x) = f(y)$
  for all $f \in \fA$, and let $Y = X/{\sim}$.
  Then:
  \begin{enumerate}
  \item Every $f \in \fA$ factors uniquely through the quotient mapping
    $\pi \colon X \to Y$ as $f = f_Y \circ \pi$.
  \item The quotient topology on $Y$ is precisely the minimal topology
    under which every such $f_Y$ is continuous.
  \item This topology is compact and Hausdorff.
  \end{enumerate}

  Conversely, let $Y$ be a compact Hausdorff space and $\pi\colon X \to Y$ a
  continuous projection.
  Then $Y$ is a quotient space of $X$ and can be obtained as above
  using $\fA = \{f \circ \pi\colon f \in C(Y,[0,1])\}$.
\end{fct}
\begin{proof}
  The first item is by construction.
  Let $\sT_1$ be the quotient topology on $Y$ and $\sT_2$ the minimal
  topology in which every $f_Y$ is continuous.
  Then $\sT_1$ is compact as a quotient of a compact topology.
  If $y_1,y_2 \in Y$ are distinct then there is a function $f_Y$
  separating them, whereby $\sT_2$ is Hausdorff.
  Finally, let $V \subseteq [0,1]$ be open and $f \in \fA$,
  so $U = f_Y^{-1}(V) \subseteq Y$ is a pre-basic open set of $\sT_2$.
  Then $\pi^{-1}(U) = f^{-1}(V) \subseteq X$ is open, whereby $U \in \sT_1$.
  Thus $\sT_1$ refines $\sT_2$.
  Since $\sT_1$ is compact and $\sT_2$ is Hausdorff they must
  coincide.

  For the converse, the space $Y' = X/{\sim}$ constructed in this manner
  can be identified with $Y$.
  The original topology on $Y$ refines the quotient topology by the
  second item, and as above they must coincide.
\end{proof}

For example, let $\fM$ be a monster model and $A \subseteq \fM$ a set.
Then there is a natural projection $\pi\colon \tS_n(\fM) \to \tS_n(A)$
restricting from $\cL(\fM)$ to $\cL(A)$, and let:
\begin{align*}
  \fA & = \{\varphi \circ \pi\colon \varphi \in C(\tS_n(A),[0,1])\}, \\
  \fA' & = \{\varphi \in C(\tS_n(\fM),[0,1])\colon
  \text{ $\varphi$ is $A$-invariant}\}.
\end{align*}
Then clearly $\fA \subseteq \fA'$.
On the other hand, $\fA$ and $\fA'$
separate the same types, so by \fref{fct:FuncQuot} every $f \in
\fA'$ factors through $\tS_n(A)$ and $\fA' = \fA$.
In other words we've shown:
\begin{lem}
  \label{lem:InvarPred}
  Let $A$ be a set (in the monster model) and let $\varphi$ be an
  $A$-invariant definable predicate with parameters possibly outside
  $A$.
  Then $\varphi$ is (equivalent to) an $A$-definable predicate $A$.
\end{lem}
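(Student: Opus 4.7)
My plan is to follow the hint given in the paragraph immediately preceding the lemma, making the argument rigorous by pinning down exactly why $\fA$ and $\fA'$ separate the same types.

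First, I would let $\fM$ be the ambient monster model and consider the restriction map $\pi\colon \tS_n(\fM) \to \tS_n(A)$, which is continuous (every $\cL(A)$-formula is in particular an $\cL(\fM)$-formula). I set
\[
  \fA = \{\psi \circ \pi : \psi \in C(\tS_n(A),[0,1])\}, \qquad
  \fA' = \{f \in C(\tS_n(\fM),[0,1]) : f \text{ is } A\text{-invariant}\},
\]
where ``$A$-invariant'' means invariant under the natural action of $\Aut(\fM/A)$ on $\tS_n(\fM)$. The inclusion $\fA \subseteq \fA'$ is immediate since types over $A$ are preserved by $\Aut(\fM/A)$.

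The main step is to show $\fA$ and $\fA'$ separate the same pairs of points in $\tS_n(\fM)$. In one direction, if $\pi(p) \neq \pi(q)$ then since $C(\tS_n(A),[0,1])$ separates points (the space is compact Hausdorff) some $\psi \circ \pi \in \fA$ separates $p$ and $q$, hence so does some member of $\fA'$. In the other direction, suppose $\pi(p) = \pi(q)$, i.e.\ $p\rest_{\cL(A)} = q\rest_{\cL(A)}$. Pick realisations $\bar a \models p$ and $\bar b \models q$ in $\fM$. Then $\tp(\bar a/A) = \tp(\bar b/A)$, and by strong homogeneity of the monster model there is $\sigma \in \Aut(\fM/A)$ with $\sigma(\bar a) = \bar b$. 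The induced action on $\tS_n(\fM)$ sends $p$ to $q$, so every $A$-invariant continuous function takes the same value on $p$ and $q$, showing $\fA'$ does not separate them.

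Having established that $\fA$ and $\fA'$ induce the same equivalence relation on $\tS_n(\fM)$, I apply \fref{fct:FuncQuot}: the quotient by this relation carries a unique compact Hausdorff topology making all functions in $\fA$ (equivalently $\fA'$) continuous, and it can be identified with $\tS_n(A)$ via $\pi$ (the converse part of the fact, applied to the continuous surjection $\pi$, gives exactly this identification). Consequently every $f \in \fA'$ factors uniquely as $f = f_A \circ \pi$ for some continuous $f_A\colon \tS_n(A) \to [0,1]$. By the earlier characterisation of definable predicates as continuous functions on the type space, $f_A$ is an $A$-definable predicate, so $\varphi$ (as a function on $\tS_n(\fM)$, and hence on any model) is equivalent to the $A$-definable predicate given by $f_A$. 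The only non-routine ingredient is the use of strong homogeneity of $\fM$ to identify the equivalence classes on $\tS_n(\fM)$ with fibres of $\pi$; everything else is a direct application of \fref{fct:FuncQuot} and the correspondence between continuous functions on type spaces and definable predicates.
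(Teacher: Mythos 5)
Your scaffolding follows the paper's sketch, but the step you single out as ``the main step''---that $\fA'$ does not separate two types $p,q$ with $\pi(p)=\pi(q)$---is where the gap lies. First, $p,q\in\tS_n(\fM)$ are types \emph{over} the monster, so their realisations lie in a proper elementary extension of $\fM$, not in $\fM$; you therefore cannot invoke strong homogeneity of $\fM$ to get $\sigma\in\Aut(\fM/A)$ mapping one realisation to the other. (If you instead take realisations $\bar a,\bar b$ in a larger model $\fM^*\succ\fM$, strong homogeneity of $\fM^*$ gives $\sigma^*\in\Aut(\fM^*/A)$ with $\sigma^*(\bar a)=\bar b$, but $\sigma^*$ need not fix $\fM$ setwise, so it does not yield an element of $\Aut(\fM/A)$.) Second, and more to the point, the underlying claim is false: $\Aut(\fM/A)$ does \emph{not} in general act transitively on the fibres of $\pi\colon\tS_n(\fM)\to\tS_n(A)$. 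For instance, take the (discrete, hence continuous) theory of dense linear orders and $A=\emptyset$: the cut at $+\infty$ in $\tS_1(\fM)$ is fixed by every automorphism of $\fM$, so its orbit is a singleton, while the fibre is all of $\tS_1(\fM)$. So there is in general no $\sigma$ with $\sigma(p)=q$.

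The claim you actually need is weaker and uses density rather than transitivity. Fix $\bar e\in\fM$ realising the common restriction $\pi(p)=\pi(q)$; such $\bar e$ exists by saturation of $\fM$ over the small set $A$. The $\Aut(\fM/A)$-orbit of the \emph{realised} type $p_0=\tp(\bar e/\fM)$ is exactly $\{\tp(\bar e'/\fM)\colon \bar e'\in\fM,\ \bar e'\models\pi(p)\}$---here strong homogeneity \emph{does} apply, since all the relevant realisations lie in $\fM$. This orbit is dense in the fibre $\pi^{-1}(\pi(p))$: if a basic open set $[\psi(\bar x,\bar m)<\delta]$ (with $\bar m\in\fM$) meets the fibre, then the partial type $\pi(p)(\bar x)\cup\{\psi(\bar x,\bar m)<\delta\}$ is consistent and, being over the small set $A\cup\bar m$, is realised by some $\bar e'\in\fM$, whence $\tp(\bar e'/\fM)$ lies in both the orbit and the open set. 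Now an $A$-invariant $f\in\fA'$ is constant on this orbit, hence by continuity constant on its closure, hence on the whole fibre, giving $f(p)=f(q)$. With this repair, the remainder of your argument---the application of \fref{fct:FuncQuot} and the identification of continuous functions on $\tS_n(A)$ with $A$-definable predicates---goes through as you wrote it.
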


Let us adapt the notions of algebraicity and algebraic closure to
continuous logic:
\begin{lem}
  \label{lem:AlgTyp}
  Let $A$ be a set of parameters and $p(x) \in \tS_1(A)$.
  Then the following are equivalent:
  \begin{enumerate}
  \item For every $\varepsilon > 0$ there is a condition $(\varphi_\varepsilon(x) = 0) \in p$
    (with parameters in $A$) and $n_\varepsilon < \omega$ such that
    for every sequence $(a_i\colon i \leq n_\varepsilon)$, if
    $\varphi_\varepsilon(x_i) < 1/2$ for all $i \leq n_\varepsilon$ then $d(x_i,x_j) \leq \varepsilon$
    for all $i<j\leq n_\varepsilon$.
  \item Every model containing $A$ contains all realisations of $p$.
  \item Every indiscernible sequence in $p$ is constant.
  \item There does not exist an infinite sequence
    $(a_i\colon i < \omega)$ of realisations of $p$ such that
    $\inf \{d(a_i,a_j)\colon i < j < \omega\} > 0$.
  \item The set of realisations of $p$ is compact.
  \end{enumerate}
\end{lem}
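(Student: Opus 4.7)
The plan is to prove the cycle by routing through condition (v), with $C = [p]^\fM$ denoting the set of realizations of $p$ in the monster model. Specifically, I will establish (iii)$\Leftrightarrow$(iv)$\Leftrightarrow$(v), then (v)$\Leftrightarrow$(i), and finally (v)$\Leftrightarrow$(ii), whose (v)$\Rightarrow$(ii) direction will be the main obstacle.

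The chain (iii)$\Leftrightarrow$(iv)$\Leftrightarrow$(v) is metric in nature. The set $C$ is the intersection of the zero sets of the conditions in $p$, hence closed in $\fM$ and (since $\fM$ is a complete metric space) complete. Compactness is then equivalent to total boundedness, and the failure of total boundedness is exactly the existence of an infinite $\delta$-separated sequence in $C$, giving (iv)$\Leftrightarrow$(v). For (iii)$\Leftrightarrow$(iv): an indiscernible sequence in $p$ has a common distance $d(a_i,a_j) = \delta'$ (by indiscernibility), so it is constant iff $\delta' = 0$; conversely, any infinite $\delta$-separated sequence in $p$ yields, via the standard extraction of indiscernibles, an indiscernible sequence in $p$ whose common distance is still $\geq \delta$, hence non-constant.

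For (v)$\Leftrightarrow$(i), direction (i)$\Rightarrow$(iv) is immediate: applying (i) with $\varepsilon = \delta/2$ to an infinite $\delta$-separated sequence in $p$ makes every $a_i$ satisfy $\varphi_\varepsilon(a_i) = 0 < 1/2$, yet the conclusion would force some pair among the first $n_\varepsilon + 1$ to be within $\varepsilon < \delta$. For (v)$\Rightarrow$(i), the key observation is that $d(x,C)$ is an $A$-definable predicate: it is the uniform limit of formulas $\min_{i \leq n} d(x, c_i)$ along finite $2^{-n}$-nets $\{c_i\} \subseteq C$ (existing by compactness), and is $A$-invariant because $C$ is, so \fref{lem:InvarPred} yields $A$-definability. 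Approximate this definable predicate by a formula and then rescale and truncate via $\dotminus$ so that the resulting $\varphi \in p$ satisfies $\{\varphi < 1/2\} \subseteq \{x : d(x,C) < \varepsilon/4\}$; a finite $\varepsilon/4$-cover of $C$ and pigeonhole then supply the required $n_\varepsilon$.

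The main obstacle is (v)$\Leftrightarrow$(ii). For (ii)$\Rightarrow$(v), arguing contrapositively: if $C$ is not compact there is an infinite $\delta$-separated sequence, which by saturation of $\fM$ extends to a $\delta$-separated family of length $\kappa$ exceeding $|A| + |\cL|$; \fref{fct:DLS} then produces a model $M \supseteq A$ of density character below $\kappa$, and $M$ must omit some realization, contradicting (ii). For (v)$\Rightarrow$(ii), given $c \in C$ and $M \supseteq A$, I consider $C_c := \{c' \in \fM : c' \equiv_M c\}$, the set of realizations of $\tp(c/M)$. This is a closed subset of $C$, hence compact, and $M$-invariant, so the same argument shows $d(x,C_c)$ is an $M$-definable predicate. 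Its infimum in $\fM$ is zero (attained at $c$), so by elementarity the infimum in $M$ is also zero, and compactness of $C_c$ together with completeness of $M$ forces an attaining element $m^* \in C_c \cap M$. Since $m^* \equiv_M c$ and $m^* \in M$, the $M$-formula $d(x, m^*)$ evaluated at $c$ and at $m^*$ yields $d(c, m^*) = d(m^*, m^*) = 0$, and thus $c = m^* \in M$.
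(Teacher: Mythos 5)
Your proof is correct, but it takes a genuinely different route from the paper's. The paper proves the single cycle (i)$\Rightarrow$(ii)$\Rightarrow$(iii)$\Rightarrow$(iv)$\Rightarrow$(i), tacking on (iv)$\Leftrightarrow$(v) at the end; its (iv)$\Rightarrow$(i) is a one-line compactness argument (the partial type $\{d(x_i,x_j)\geq\varepsilon\}\cup\bigcup p(x_i)$ is inconsistent, so some finite fragment is), and its (i)$\Rightarrow$(ii) is syntactic, taking $n_\varepsilon$ minimal, pushing a $\sup$-condition down to $M$, and concluding from completeness of $M$. You instead use (v) as the hub, and the engine of both (v)$\Rightarrow$(i) and (v)$\Rightarrow$(ii) is the observation that $d(x,C)$ (and its relative $d(x,C_c)$) is a uniform limit of $\min_i d(x,c_i)$ over finite nets, hence a definable predicate, hence $A$- (resp.\ $M$-) definable by \fref{lem:InvarPred}. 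That turns compactness of $C$ directly into the totally-boundedness witness $(\varphi_\varepsilon, n_\varepsilon)$ in (i), and into a realised-type argument in (ii) (approximate the $\inf$, use compactness of $C_c$ plus completeness of $M$ to capture the limit, then $d(x,m^*)$ as an $\cL(M)$-formula forces $c=m^*$). What the paper's route buys is economy — four implications, each short, relying only on compactness and Ramsey; what yours buys is a reusable fact (definability of $d(x,C)$ for $C$ a compact $A$-invariant type-definable set) and a conceptually clean ``topometric'' picture of algebraicity. One small caveat worth flagging: the statement of (i) as printed reads ``$d(x_i,x_j)\leq\varepsilon$ for \emph{all} $i<j\leq n_\varepsilon$,'' but both the paper's own proof of (i)$\Rightarrow$(ii) and your pigeonhole argument require the \emph{some}-pair reading (otherwise $n_\varepsilon$ is vacuous); your argument is correct under the evidently intended reading. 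You also compress the rescaling step — turning an $A$-formula approximating $d(x,C)$ into a $\varphi$ with $\varphi^p=0$ and $[\varphi<1/2]\subseteq\{d(x,C)<\varepsilon/4\}$ takes a repeated $\dotplus$ (or a dyadic-constant argument) rather than a literal scalar multiplication, but that is routine.
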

\begin{proof}
  \begin{cycprf}
  \item[\impnext]
    We may assume that for the choice of $\varphi_\varepsilon$, the number $n_\varepsilon$ is
    minimal: we can therefore find in the universal domain
    elements $a_{<n_\varepsilon}$ such that $\varphi_\varepsilon(a_i) < 1/2$ for all $i < n_\varepsilon$
    and yet $d(a_i,a_j) > \varepsilon$ for all $i<j<n_\varepsilon$.
    Then we have (in the universal domain):
    $${\sup}_{x_{<n_\varepsilon}} \left(
      \bigwedge_{i<n_\varepsilon} \left( \half \dotminus \varphi_\varepsilon(x_i) \right) \land
      \bigwedge_{i<j<n_\varepsilon} \left( d(x_i,x_j) \dotminus \varepsilon \right) \right) > 0.$$
    Assume that $A \subseteq M$.
    Then the same holds in $M$, and we may therefore find $a_{<n_\varepsilon}$
    as above inside $M$.
    Assume also that $a \models p$.
    Then by assumption there is some $i < n_\varepsilon$ such that
    $d(a,a_i) \leq \varepsilon$.
    Since this holds for all $\varepsilon > 0$ and $M$ is complete we must have
    $a \in M$.
  \item[\impnext] By Downward L\"owenheim-Skolem.
  \item[\impnext] By (compactness and) Ramsey's theorem.
  \item[\impfirst]
    Since the set $\{d(x_i,x_j) \geq \varepsilon\colon i<j<\omega\} \cup \bigcup_{i<\omega} p(x_i)$ must be
    inconsistent.
  \item[(iv) $\Longleftrightarrow$ (v).] Condition (iv) is equivalent to saying that
    the set of realisations of $p$ is totally bounded.
    Since it is in addition automatically complete (a limit of
    realisations of $p$ is a realisation of $p$), this is the same as
    saying it is compact.
  \end{cycprf}
\end{proof}

\begin{dfn}
  \label{dfn:Alg}
  If $p \in \tS(A)$ satisfies any of the equivalent properties in
  \fref{lem:AlgTyp} then it is called \emph{algebraic}.
  We say that $a$ is \emph{algebraic} over $A$ if $\tp(a/A)$ is
  algebraic.
  We define the \emph{algebraic closure} of $A$, denoted $\acl(A)$, as
  the set of all algebraic elements over $A$.
  By \fref{lem:AlgTyp} if $A \subseteq M$ then $\acl(A) \subseteq M$ as well, so
  $\acl(A)$ is the same in every model containing $A$.
  If $A = \acl(A)$ then we say that $A$ is \emph{algebraically
    closed}.
\end{dfn}

\subsection{The metric on types of a complete theory}
Let $T$ be complete.
Since any two $n$-types are realised inside the monster model we can
define for every $p,q \in \tS_n(T)$:
\begin{align*}
  d(p,q) & = \inf \{d(\bar a,\bar b)\colon \bar a \models p \text{ and } \bar b
  \models q\}
\end{align*}
Here $d(a_{<n},b_{<n}) = \bigvee_{i<n} d(a_i,b_i)$.
It is trivial to verify this is a pseudo-metric.
By compactness the infimum is attained, so
it is in fact a metric: $d(p,q) = 0 \Longrightarrow p = q$.
Note also that we can
construct $\tS_n(T)$ as a set as $\fM^n/\Aut(\fM)$, where $\fM$ is the
monster model and we divide it by the action of its automorphism
group, in which case the distance between types in the one induced
from $\fM^n$.
(For this purpose, any $\omega$-saturated and strongly homogeneous
model of $T$ would serve just as well.)

The metric on $\tS_n(T)$ refines the logic topology.
Indeed, let $p \in \tS_n(T)$ and $U$ a neighbourhood of $p$.
Then there is a formula $\varphi$ such that
$p \in [\varphi = 0] \subseteq [\varphi < 1/2] \subseteq U$.
The uniform continuity of $\varphi$
implies the existence of $\delta > 0$ such that
$d(p,q) \leq \delta \Longrightarrow \varphi^q < 1/2$, so $U$ contains a metric neighbourhood of
$p$.
By a theorem of Henson (for Banach space structures in positive
bounded logic, but it boils down to the same thing), for a complete
countable theory $T$, the metric on $\tS_n(T)$ coincides with the
logic topology for all $n$ if and only if $T$
is separably categorical, i.e., if and only if it has a unique
separable model up to isomorphism.

Also, if $F \subseteq \tS_n(T)$ is closed and $\varepsilon > 0$, then so is the closed
$\varepsilon$-neighbourhood of $F$:
$$F^\varepsilon = \{p \in \tS_n(T)\colon d(p,F) \leq \varepsilon\}.$$
Indeed, since the set $F$ is closed in can be written as $[p(\bar x)]$
where $p$ is some partial type, and then
\begin{gather*}
  F^\varepsilon = [\exists \bar y\, \big( d(\bar x,\bar y) \leq \varepsilon \land p(\bar y) \big)].
\end{gather*}

This leads us to the following definition which will turn out to be
useful later on:
\begin{dfn}
  \label{dfn:TopoMetric}
  A compact \emph{topometric} space is a triplet $\langle X,\sT,d\rangle$,
  where $\sT$ is a
  compact Hausdorff topology and $d$ a metric on $X$, satisfying:
  \begin{enumerate}
  \item The metric refines the topology.
  \item For every closed $F \subseteq X$ and $\varepsilon > 0$, the closed
    $\varepsilon$-neighbourhood of $F$ is closed in $X$ as well.
  \end{enumerate}
\end{dfn}

When dealing with topometric spaces some care must be taken about
the language.
We will follow the convention that
whenever we use terms which come from the realm of general topology
(such as compactness, closed and open sets, etc.)\ we refer to the
topology.
When wish to refer to the metric, we will use terminology
that clearly comes from the realm of metric spaces.
When there may be ambiguity, we will say explicitly to which part
we are referring.

We may therefore sum up the previous observations as:
\begin{fct}
  The type space $\tS_n(T)$ is a compact topometric space.
\end{fct}

We will come back to topometric spaces later.
Let us now conclude with a small fact about them:
\begin{lem}
  Let $X$ be a compact topometric space (so by the terminological
  convention above, we mean to say that the topology is compact).
  Then it is complete (as a metric space).
\end{lem}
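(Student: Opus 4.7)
The plan is to take an arbitrary $d$-Cauchy sequence $(x_n)$ in $X$ and produce a metric limit by first extracting a \emph{topological} cluster point from compactness of $\sT$, then using the topometric compatibility axiom to control the metric distance to that cluster point.

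Concretely, I would first set $F_N = \overline{\{x_n : n \geq N\}}^{\sT}$. These form a decreasing family of nonempty $\sT$-closed subsets of the compact space $X$, so by the finite intersection property there is some $x \in \bigcap_{N} F_N$. This $x$ is the candidate limit. The point is that without metrizability of $\sT$ we cannot directly extract a convergent subsequence, but a common cluster point is all we need.

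Next I would translate ``Cauchy'' into a topometric statement. Given $\varepsilon > 0$, choose $N$ so large that $d(x_N, x_n) \leq \varepsilon$ for all $n \geq N$. Since $\sT$ is Hausdorff, $\{x_N\}$ is $\sT$-closed, so by axiom (ii) of \fref{dfn:TopoMetric} its closed $\varepsilon$-neighbourhood $B = \{y \in X : d(x_N, y) \leq \varepsilon\}$ is $\sT$-closed as well. By construction the tail $\{x_n : n \geq N\}$ is contained in $B$, hence so is its $\sT$-closure $F_N$, and therefore $x \in B$, i.e., $d(x_N, x) \leq \varepsilon$. Combined with the Cauchy estimate this gives $d(x_n, x) \leq 2\varepsilon$ for all $n \geq N$, so $x_n \to x$ in the metric.

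I do not anticipate a genuine obstacle here; the only subtlety is resisting the temptation to pass to a convergent subsequence in $\sT$ (which need not exist in a non-metrizable compact Hausdorff space) and instead using the cluster-point formulation via intersecting closures of tails. The topometric axiom (ii) is exactly what bridges the topological cluster point to a metric limit.
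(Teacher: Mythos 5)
Your proof is correct and uses the same two ingredients as the paper's: compactness of $\sT$ applied to a nested family of closed sets, and axiom (ii) of \fref{dfn:TopoMetric} to ensure that closed metric balls are $\sT$-closed. The organization differs slightly: the paper passes to a rapidly convergent subsequence ($d(x_n,x_{n+1}) \leq 2^{-n-1}$) and intersects the closed balls $\{x_n\}^{2^{-n}}$ directly, whereas you keep the original sequence, intersect the $\sT$-closures of the tails to get a topological cluster point, and then invoke axiom (ii) afterwards to show that cluster point is the metric limit. Your version has the small merit of making explicit the general principle that in a compact topometric space, every $\sT$-cluster point of a Cauchy sequence is in fact its metric limit; the paper's version is marginally shorter. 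Both are fine proofs of the same depth.
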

\begin{proof}
  Let $(x_n\colon n < \omega)$ be a Cauchy sequence in $X$.
  We may assume that
  $d(x_n,x_{n+1}) \leq 2^{-n-1}$ for all $n$.
  For each $n$ the set $\{x_n\}^{2^{-n}}$, (the closed
  $2^{-n}$-ball around $x_n$) is closed in the topology, and
  $(\{x_n\}^{2^{-n}}\colon n < \omega)$ is a decreasing sequence of non-empty
  closed sets.
  By compactness there is some $x$ in the intersection, and clearly
  $x_n \to x$ in the metric.
\end{proof}

\subsection{Quantifier elimination}

\begin{dfn}
  A \emph{quantifier-free definable predicate} is a definable
  predicate defined by a forced limit of quantifier-free formulae.

  A theory has \emph{quantifier elimination} if every formula can
  be uniformly approximated over all models of $T$ by quantifier-free
  formulae, i.e., if every formula is equal in models of $T$
  to a quantifier-free definable predicate.

  (In order to avoid pathologies when there are no constant symbols
  in $\cL$, we must allow that if $\varphi$ is a
  formula without free variables, the quantifier-free definable
  predicate may have a free variable.)
\end{dfn}

We introduce the following criterion for quantifier elimination,
analogous to the classical back-and-forth criterion:
\begin{dfn}
  \label{dfn:BnF}
  We say that a theory $T$ has the \emph{back-and-forth} property
  if for every two $\omega$-saturated models $M,N \models T$, non-empty tuples
  $\bar a \in M^n$ and $\bar b \in N^n$, and singleton $c \in M$,
  if $\bar a$ and $\bar b$ have the same
  quantifier-free type (i.e., $\varphi^M(\bar a) = \varphi^N(\bar b)$ for all
  quantifier-free $\varphi$) then there is $d \in N$ such that $\bar a,c$ and
  $\bar b,d$ have the same quantifier-free type.
\end{dfn}

\begin{thm}
  \label{thm:BnFQE}
  The following are equivalent for any continuous theory
  $T$ (not necessarily complete):
  \begin{enumerate}
  \item The theory $T$ admits quantifier elimination.
  \item The theory $T$ has the back-and-forth property.
  \end{enumerate}
\end{thm}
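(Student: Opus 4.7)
The plan is to treat the two directions separately, using $\omega$-saturated models on both sides and applying \fref{cor:SWCnct} in one direction.

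For (i) $\Rightarrow$ (ii), I would start from QE and show that the quantifier-free type of $c$ over $\bar a$, formally transported to a type over $\bar b$, is approximately finitely satisfiable in $N$; by $\omega$-saturation of $N$ it is then realized by some $d$, which is automatically a witness. The key inequality to establish is that for every formula $\psi(x,\bar y)$,
\[
\inf_x \psi(x,\bar b)^N \;\leq\; \psi(c,\bar a)^M.
\]
Now $\chi(\bar y) := \inf_x \psi(x,\bar y)$ is a formula in $\bar y$, and by QE it is equivalent modulo $T$ to a quantifier-free definable predicate. Since $\bar a$ and $\bar b$ have the same quantifier-free type, they agree on every quantifier-free definable predicate (which is a uniform limit of quantifier-free formulae), so $\chi(\bar b)^N = \chi(\bar a)^M \leq \psi(c,\bar a)^M$, giving the inequality. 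Approximate finite satisfiability of the transported type follows by combining finitely many such $\psi$'s into one.

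For (ii) $\Rightarrow$ (i), the strategy is to show that quantifier-free formulae in $x_{<n}$ separate the points of $\tS_n(T)$, and then invoke \fref{cor:SWCnct}. The key claim, proved by induction on the complexity of $\varphi$, is: if $M,N \models T$ are $\omega$-saturated, $\bar a \in M^n$, $\bar b \in N^n$ are non-empty tuples with the same quantifier-free type, then $\varphi^M(\bar a) = \varphi^N(\bar b)$. The atomic and connective cases are immediate from the hypothesis and \fref{dfn:FormInterp}. For the quantifier case $\varphi = \inf_y \psi(y,\bar x)$: given any $c \in M$, the back-and-forth property yields $d \in N$ with $\bar a c$ and $\bar b d$ of the same quantifier-free type, so by induction $\psi^M(c,\bar a) = \psi^N(d,\bar b) \geq \inf_y \psi^N(y,\bar b)$; taking the infimum over $c$ and applying the symmetric argument gives equality. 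Since every $p \in \tS_n(T)$ is realized in some $\omega$-saturated model of $T$, the claim shows that two complete types with the same quantifier-free part must coincide. The family of quantifier-free formulae (as functions on $\tS_n(T)$) is closed under $\lnot$, $\dotminus$ and $\tfrac{x}{2}$, and contains the constant $0$ (e.g.\ as $d(x_0,x_0)$), so \fref{cor:SWCnct} applies and this family is dense in $C(\tS_n(T),[0,1])$, which is QE.

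The main obstacle I foresee is bookkeeping around the edge cases: the back-and-forth hypothesis is stated only for non-empty tuples, so the case of sentences ($n=0$) must be reduced to the case of formulae in at least one free variable by adding a dummy variable, which is precisely the parenthetical allowance in the definition of QE. A secondary point of care is in the (i) $\Rightarrow$ (ii) direction, where one must remember that QE gives approximation by quantifier-free \emph{definable predicates}, not necessarily by quantifier-free formulae, and use that $\bar a \equiv_{qf} \bar b$ implies agreement on all such uniform limits.
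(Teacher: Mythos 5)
Your proof is correct, and it runs on the same engine as the paper's (back-and-forth plus $\omega$-saturation in one direction, separation of points plus \fref{cor:SWCnct} in the other), but the decomposition in (ii) $\Rightarrow$ (i) is genuinely different and arguably cleaner. The paper works one quantifier level at a time: it introduces the auxiliary quotient space $\tS_n^{\inf_y}(T)$ of types of formulae $\inf_y\psi(y,\bar x)$ with $\psi$ quantifier-free, shows quantifier-free formulae separate points there, applies \fref{cor:SWCnct} to get uniform approximation of $\inf_y$-formulae, and then appeals to a secondary induction on the syntactic structure of a general formula to propagate the approximation through nested quantifiers. You instead carry out the full semantic back-and-forth induction on the complexity of an arbitrary $\varphi$ to conclude directly that the quantifier-free type determines the complete type over $T$, hence quantifier-free formulae separate points in $\tS_n(T)$ itself, and a single application of \fref{cor:SWCnct} finishes. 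Your handling of the $\inf_y$-step is also slightly more economical: you do not need the infimum to be attained (which the paper arranges via $\omega$-saturation), because passing to arbitrary $c$ and taking the infimum afterwards already gives the inequality. In the (i) $\Rightarrow$ (ii) direction the paper uses QE once to upgrade the quantifier-free equivalence $\bar a\equiv_{qf}\bar b$ to full $\bar a\equiv\bar b$ and then transports $\tp(c,\bar a)$ directly, whereas you transport only the quantifier-free type and verify approximate finite satisfiability by hand via the key inequality $\inf_x\psi(x,\bar b)^N\leq\psi(c,\bar a)^M$; both routes are fine, and your care to distinguish ``quantifier-free definable predicate'' from ``quantifier-free formula'' is correctly placed. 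Your remark about the empty tuple and the parenthetical allowance in the definition of QE matches what the paper implicitly relies on.
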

\begin{proof}
  Assume first that $T$ admits quantifier elimination.
  Then under the assumptions we have $\bar a \equiv \bar b$.
  Let $p(x,\bar y) = \tp(c,\bar a)$.
  Then $p(x,\bar b)$ is consistent and is realised by some $d \in N$ by
  $\omega$-saturation.

  For the converse we introduce an auxiliary definition:
  The $\inf_y$-type of a tuple $\bar a \in M$ is given by the function
  $\varphi(\bar x) \mapsto \varphi^M(\bar a)$, where $\varphi$ varies over all the formulae of
  the form $\inf_y \psi(y,\bar x)$, $\psi$ quantifier-free.
  We define $\tS_n^{\inf_y}(T)$ as the set of all $\inf_y$-types
  of $n$-tuples in models of $T$.
  This is a quotient of $\tS_n(T)$, and we equip it with the quotient
  topology, which is clearly compact and Hausdorff.

  We claim first that if $M,N \models T$ and $\bar a \in M^n$, $\bar b \in N^n$
  have the same quantifier-free type then they have the same
  $\inf_y$-type.
  Since we may embed $M$ and $N$ elementarily in more saturated
  models, we may assume both are $\omega$-saturated.
  Assume $\inf_y \psi(y,\bar a) = r$.
  Then there are $c_m \in M$ such that $\psi(c_m,\bar a) \leq r+2^{-m}$, and
  by $\omega$-saturation there is $c \in M$ such that
  $\psi(c,\bar a) = r$.
  Therefore there is $d \in N$ such that $\psi(d,\bar b) = r$, whereby
  $\inf_y \psi^M(y,\bar a) \geq \inf_y \psi^N(y,\bar b)$.
  By a symmetric argument we have equality.

  We conclude that the quantifier-free formulae separate points in
  $\tS_n^{\inf_y}(T)$.
  Since quantifier-free formulae form a family of continuous functions on
  $\tS^{\inf_y}_n(T)$ which is closed under continuous connectives,
  the quantifier-free formulae are dense in
  $C(\tS_n^{\inf_y}(T),[0,1])$ by \fref{cor:SWCnct}.
  In particular, every $\inf_y$-formula can be uniformly approximated
  by quantifier-free formulae, and by induction on the structure of
  the formula, every formula can be
  thus approximated (on models of $T$, or equivalently on
  $\tS_n(T)$).
\end{proof}

\begin{cor}
  The theory of atomless probability algebras (described above)
  is complete and has quantifier elimination.
\end{cor}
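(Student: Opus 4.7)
The plan is to apply the back-and-forth criterion of \fref{thm:BnFQE}, for which the key technical point is the following: in an $\omega$-saturated atomless probability algebra, for every element $a$ of measure $s$ and every $r \in [0,s]$, there exists $x \leq a$ with $\mu(x) = r$. Approximate finite satisfiability of the partial type $\{\mu(x \wedge a^c) = 0\} \cup \{\mu(x) = r\}$ follows from the halving axiom by dyadic approximation: iteratively halving $a$ and its pieces, one obtains for each $\varepsilon > 0$ a finite disjoint union of sub-elements of $a$ whose total measure is within $\varepsilon$ of $r$. By $\omega$-saturation, the type is then realized exactly in the ambient model.

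Before the back-and-forth, I would record the (routine) observation that the quantifier-free type of a tuple $\bar a$ in a probability algebra is determined by the measures of the ``atoms'' of the finite Boolean subalgebra it generates, i.e., the minimal nonzero elements of that subalgebra. This follows from the equational axioms of Boolean algebras together with finite additivity $\mu(x) + \mu(y) = \mu(x \vee y) + \mu(x \wedge y)$, which allows expressing the measure of an arbitrary Boolean combination in terms of the measures of the atoms.

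Now for back-and-forth, let $M,N \models APA$ be $\omega$-saturated, let $\bar a \in M^n$, $\bar b \in N^n$ have the same quantifier-free type, and let $c \in M$. Enumerate the atoms of the finite Boolean subalgebra of $M$ generated by $\bar a$ as $e_0,\ldots,e_{k-1}$, and the corresponding atoms in $N$ generated by $\bar b$ as $e'_0,\ldots,e'_{k-1}$ (the correspondence is given by the matching Boolean-algebraic expressions in the generators). By the assumption on quantifier-free types, $\mu(e_i) = \mu(e'_i)$ for each $i$. Set $r_i = \mu(c \wedge e_i) \leq \mu(e_i)$. By the technical step above, choose $d_i \leq e'_i$ with $\mu(d_i) = r_i$, and let $d = \bigvee_{i<k} d_i$. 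Then the atoms of the subalgebra generated by $\bar b,d$ are the $d_i$ and $e'_i \wedge d_i^c$, with measures matching those of $c \wedge e_i$ and $c^c \wedge e_i$ in $M$; hence $\bar a c$ and $\bar b d$ have the same quantifier-free type. Thus \fref{thm:BnFQE} yields quantifier elimination.

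Completeness then comes for free: by quantifier elimination, every sentence $\varphi$ is uniformly approximated over models of $APA$ by quantifier-free formulae $\psi_n(x)$ (possibly with a dummy free variable, as permitted). Evaluating at $x = 0$, the value $\psi_n^M(0^M)$ is determined purely by the equational axioms of Boolean algebras together with $\mu(0) = 0$, hence is the same in every model of $APA$; therefore $\varphi^M = \lim_n \psi_n^M(0^M)$ is independent of $M$. The only real work is the sub-element-of-prescribed-measure step, which is where $\omega$-saturation is genuinely used; everything else is a direct translation of the classical back-and-forth argument for atomless Boolean algebras into the continuous setting.
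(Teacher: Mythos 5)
Your proof is correct and its core — back-and-forth by matching the measures of the atoms of the finitely generated Boolean subalgebra, and using atomlessness to carve a sub-element of prescribed measure under each atom — is exactly the argument the paper gives. Two small points of divergence: the paper notes that the sub-element-of-prescribed-measure step already follows from atomlessness plus metric completeness of the model (so $\omega$-saturation is not actually needed, though invoking it as you do is harmless since \fref{thm:BnFQE} supplies it); and for completeness the paper uses joint embedding of probability algebras into a common atomless one, whereas you instead evaluate the quantifier-free approximants at the constant $0$, which works equally well here precisely because the language has a constant symbol whose quantifier-free type is fixed.
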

\begin{proof}
  The back-and-forth property between complete atomless
  probability algebras is immediate, and does not require
  $\omega$-saturation (or rather, as
  it turns out, all complete atomless probability
  algebras are $\omega$-saturated).
  Indeed, two $n$-tuples $a_{<n} \in M$ and $b_{<n} \in N$
  have the same quantifier-free type ($\bar a \equiv^{qf} \bar b$)
  if and only if they generate isomorphic algebras.
  Letting $a^0 = a$, $a^1 = a^c$ and
  $\bar a^{\bar t} = \bigwedge_{i<n} a_i^{t_i}$ for $\bar t \in \{0,1\}^n$ we have
  $\bar a \equiv^{qf} \bar b$ if and only if
  $\mu(\bar a^{\bar t}) = \mu(\bar b^{\bar t})$ for all
  $\bar t \in \{0,1\}^n$.
  If $c \in M$ is any other singleton use atomlessness to find for
  each $\bar t \in \{0,1\}^n$ an event
  $d_{\bar t} \leq \bar b^{\bar t}$ such that
  $\mu(d_{\bar t}) = \mu(c \land \bar a^{\bar t})$.
  Then $d = \bigvee d_{\bar t}$ will do.

  Also, every two probability algebras can be embedded in a third one,
  which can be further embedded in a complete atomless one,
  whence the completeness.
\end{proof}

\begin{dfn}
  A theory $T$ is \emph{model complete} if for every $M,N \models T$,
  $M \subseteq N \Longrightarrow M \prec N$.
\end{dfn}

We leave the following as an exercise to the reader (see
\cite{BenYaacov:NakanoSpaces} for a complete proof):
\begin{prp}
  A theory $T$ is model complete if and only if every formula can be
  uniformly approximated on $\tS_n(T)$ by formulae of the form
  $\inf_{\bar y} \varphi(\bar x,\bar y)$, where $\varphi$ is quantifier-free.
\end{prp}

\subsection{Continuous first order logic and open Hausdorff cats}

We now show the equivalence between the framework of
continuous first order logic and that of (metric) open Hausdorff cats.
For this we assume familiarity with the latter framework, as exposed
in \cite{BenYaacov:Morley}.
The reader who is not familiar with open Hausdorff cats may safely
skip this part.

To every theory $T$ we associate its type-space functor $\tS(T)$ in
the usual manner.
For every $n < \omega$ we defined $\tS_n(T)$ above.
If $m,n < \omega$ and $f\colon n \to m$ is any mapping, we define
$f^*\colon \tS_m(T) \to \tS_n(T)$ by
$f^*(p(x_{<m})) =
\{\varphi(x_{<n}) = \varphi(x_{f(0)},\ldots,x_{f(n-1)})^p\colon \varphi(x_{<n}) \in \cL\}$, i.e.,
$f^*\colon \tp(a_0,\ldots,a_{m-1}) \mapsto \tp(a_{f(0)},\ldots,a_{f(n-1)})$.

\begin{fct}
  \label{fct:CFOIsOpenHausCat}
  Let $T$ be a continuous first order theory.
  Then its type-space functor $\tS(T)$ is an open, compact and
  Hausdorff type-space functor in the sense of \cite{BenYaacov:PositiveModelTheoryAndCats}.

  Since a type-space functor is one way to present a cat, this can be
  restated as:
  every continuous first order theory is an open Hausdorff cat.
\end{fct}
\begin{proof}
  Clearly $\tS(T)$ is a Hausdorff compact type-space functor.
  To see it is open, let $\pi_n\colon \tS_{n+1}(T) \to \tS_n(T)$ consist of
  restriction to the $n$
  first variables (so $\pi_n = (n \hookrightarrow n+1)^*$).
  Let $U \subseteq \tS_{n+1}(T)$ be
  a basic open set, i.e., of the form $[\varphi(\bar x,y) < r]$.
  Then $\pi_n(U) = [\inf_y \varphi < r]$ is open as well, so $\pi_n$ is an open
  mapping.
\end{proof}

Recall that a \emph{definable $n$-ary function} from a cat $T$ to a
Hausdorff space $X$ is a continuous mapping $f\colon \tS_n(T) \to X$.
Equivalently, this is a mapping from the
models of $T$ to $X$ such
that for every closed subset $F \subseteq X$, the property
$f(\bar x) \in F$ is type-definable without parameters (whence
\emph{definable} function).
A \emph{definable metric} is a definable binary function which defines
a metric on the models.

Note that $d$ is indeed a definable metric, so $T$ is a metric cat, and
the models of $T$ (in the sense of continuous first-order logic) are
precisely its complete models as a metric Hausdorff cat, as
defined in \cite{BenYaacov:Morley}.

For the converse, we will use the following property of definable
functions in open cats:
\begin{lem}
  \label{lem:OpenHausQuant}
  Let $T$ be an open cat.
  Let $f(\bar x,y)$ be an definable $n+1$-ary function from $T$ to
  $[0,1]$ (or the reals, for that matter), and let
  $g(\bar x) = \sup_y f(\bar x,y)$.
  Then $g$ is also a definable function.
\end{lem}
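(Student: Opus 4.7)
The plan is to identify definable $(n{+}1)$-ary functions to $[0,1]$ with continuous maps $f\colon \tS_{n+1}(T) \to [0,1]$, and similarly for $g\colon \tS_n(T) \to [0,1]$, and then show that the latter, defined fibrewise by
\[
g(p) = \sup\{f(q) : q \in \tS_{n+1}(T),\ \pi(q) = p\},
\]
is continuous, where $\pi = (n \hookrightarrow n{+}1)^*\colon \tS_{n+1}(T) \to \tS_n(T)$ is the restriction map. Note that by compactness of the fibre $\pi^{-1}(p)$ and continuity of $f$, this supremum is attained, so there is no issue distinguishing it from a supremum taken over realisations in any $\omega$-saturated model.

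To verify continuity, it suffices to show that the sub-basic sets $\{g > r\}$ and $\{g < r\}$ are open in $\tS_n(T)$ for every $r \in [0,1]$. For the first of these, observe that
\[
\{p : g(p) > r\} = \pi\bigl(\{q \in \tS_{n+1}(T) : f(q) > r\}\bigr),
\]
which is the image under $\pi$ of an open set. By \fref{fct:CFOIsOpenHausCat} (or rather, by the hypothesis that $T$ is an open cat), the projection $\pi$ is an open map, so this image is open. This is the step where the openness hypothesis is used, and it is the key point of the lemma.

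For the second kind of set, we use compactness instead: $\{p : g(p) \geq r\}$ consists of those $p$ whose fibre meets the closed set $\{q : f(q) \geq r\}$, i.e.,
\[
\{p : g(p) \geq r\} = \pi\bigl(\{q : f(q) \geq r\}\bigr).
\]
The set inside $\pi$ is closed in the compact Hausdorff space $\tS_{n+1}(T)$, hence compact, so its continuous image is compact and therefore closed in $\tS_n(T)$. Its complement $\{g < r\}$ is thus open. Combining the two, $g$ is continuous, i.e., a definable $n$-ary function, as desired. The only substantive ingredient is openness of $\pi$ (which is exactly what ``open cat'' provides); everything else is standard point-set topology of compact Hausdorff spaces.
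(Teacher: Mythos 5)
Your proof is correct and is essentially the paper's argument, rephrased in point-set topology. The paper defines $g(\bar x) \geq r$ by the partial type $\bigwedge_{s<r}\exists y\, f(\bar x,y)\geq s$ (your ``$\{g\geq r\}$ closed by compactness'') and $g(\bar x)\leq r$ by $\forall y\,f(\bar x,y)\leq r$, using openness of $T$ (your ``$\{g > r\}$ open via openness of $\pi$''); these are the complementary statements, so the two proofs use the identical two ingredients in the identical places. In fact, the remark immediately following the lemma in the paper states the purely topological version (an open continuous surjection of compact Hausdorff spaces pushes continuous $[0,1]$-valued functions forward via fibrewise $\sup$), and your argument is a direct proof of that remark.
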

\begin{proof}
  For every real number, we can define $g(\bar x) \geq r$ by the
  partial type $\bigwedge_{s<r} \exists y\, f(\bar x,y) \geq s$.
  We can also define $g(\bar x) \leq r$ by $\forall y\,f(\bar x,y) \leq r$,
  and this is expressible by a partial type since $T$ is assumed to
  be open.
\end{proof}

\begin{rmk}
  This can also be stated in purely topological terms:
  Let $X$ and $Y$ be compact Hausdorff spaces, and $f\colon X \to Y$ an open
  continuous surjective mapping.
  Let $\varphi\colon X \to [0,1]$ be continuous, and let
  $\psi\colon Y \to [0,1]$ be defined by $\psi(y) = \sup \{\varphi(x)\colon f(x) = y\}$.
  Then $\psi$ is continuous.
\end{rmk}

Observe that if  $T$ is a metric open Hausdorff cat and $d$ a definable
metric on some sort, then by compactness the metric is bounded.
Thus, up to rescaling we may always assume its range is contained in
$[0,1]$.

\begin{thm}
  \label{thm:OpenHausCats}
  Let $T$ be a metric open Hausdorff cat, and let $d$ be a definable
  metric on the home sort with range in $[0,1]$.

  Then there exists a metric signature $\hat \cL$ whose distinguished
  metric symbol is $\hat d$, and an $\hat \cL$-theory
  $\hat T$, such that $\tS(\hat T) \simeq \tS(T)$, and such that the
  metric $\hat d$ coincide with $d$.

  Moreover, if $\kappa$ is such that $\tS_n(T)$ has a basis of cardinality
  $\leq \kappa$ for all $n < \omega$, then we can arrange that $|\cL| \leq \kappa$.
\end{thm}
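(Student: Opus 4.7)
The plan is to name a dense family of definable $[0,1]$-valued functions on each type space $\tS_n(T)$ as predicate symbols and take $\hat T$ to be the theory of the resulting class of $\hat\cL$-structures. For each $n \geq 1$ I would pick $\mathcal{D}_n \subseteq C(\tS_n(T),[0,1])$ uniformly dense with $|\mathcal{D}_n| \leq \kappa$; such a set exists because $\tS_n(T)$ is compact Hausdorff of weight $\leq \kappa$, so $C(\tS_n(T),[0,1])$ has density character $\leq \kappa$ in the uniform topology (embed $\tS_n(T)$ into $[0,1]^\kappa$ on a basis and apply Stone-Weierstrass). The signature $\hat\cL$ then contains, for each $f \in \mathcal{D}_n$, an $n$-ary predicate symbol $P_f$, with $P_d$ distinguished as $\hat d$. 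A uniform continuity modulus $\delta_{P_f}$ can be chosen because the definable metric $d$ refines the logic topology on $\tS_n(T)$: $f$ is continuous on the compact topology, hence uniformly continuous for the unique compatible uniformity and \emph{a fortiori} for the finer uniformity of $d$; since the tuple-to-type map is $1$-Lipschitz this descends to $M^n$. Each model $M$ of $T$ becomes an $\hat\cL$-structure $M^{\hat\cL}$ via $P_f^M(\bar a) := f(\tp^T(\bar a))$, and I would set $\hat T := \Th\bigl(\{M^{\hat\cL}\colon M \models T\}\bigr)$.

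The central task is to show that $\alpha \colon \tS_n(T) \to \tS_n(\hat T)$, sending $\tp^T(\bar a) \mapsto \tp^{\hat T}(\bar a)$ computed in an $\omega$-saturated model of $T$, is a homeomorphism compatible with the metrics. This rests on an inductive claim: to every $\hat\cL$-formula $\varphi(\bar x)$ one can attach a continuous $\varphi_* \colon \tS_n(T) \to [0,1]$ with $\varphi^M(\bar a) = \varphi_*(\tp^T(\bar a))$ in any $\omega$-saturated $M \models T$. The atomic case is by construction, the connective case is composition with a continuous map, and the quantifier case $\varphi = \sup_y \psi(\bar x, y)$ is precisely where openness of the cat is essential: by \fref{lem:OpenHausQuant} the function $p \mapsto \sup\{\psi_*(q)\colon q \in \pi^{-1}(p)\}$, with $\pi \colon \tS_{n+1}(T) \to \tS_n(T)$ the restriction, is continuous on $\tS_n(T)$, and $\omega$-saturation of $M$ guarantees that the $\sup$ over $y \in M$ reproduces this value. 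Well-definedness of $\alpha$ follows, and injectivity is immediate since the $P_f$ with $f \in \mathcal{D}_n$ already separate $T$-types.

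For surjectivity of $\alpha$, given $q \in \tS_n(\hat T)$ and a finite $\Sigma_0 = \{\varphi_i \leq r_i\}_{i<k} \subseteq q$, a reflection argument shows $\Sigma_0$ is approximately realizable in some $M \models T$: otherwise the sentence $\inf_{\bar x} \bigvee_i (\varphi_i \dotminus r_i) \geq \varepsilon$ holds in every model of $T$, lies in $\hat T$, and contradicts consistency of $q$. Through the correspondence $\varphi \mapsto \varphi_*$ each such finite $\Sigma_0$ describes a nonempty closed subset of the compact space $\tS_n(T)$, so compactness of $\tS_n(T)$ produces a single $T$-type realizing all of $q$. The map is a bijection, and a homeomorphism because basic $\hat\cL$-open sets $[\varphi < r]$ pull back to the open sets $\varphi_*^{-1}([0,r))$, while density of $\{\varphi_* \colon \varphi \in \hat\cL\}$ in $C(\tS_n(T),[0,1])$ (from $\mathcal{D}_n$ dense plus closure under connectives) lets one approximate the opposite direction. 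The metrics agree by construction since $\hat d = P_d$, and summing over arities gives $|\hat\cL| \leq \aleph_0 \cdot \kappa = \kappa$. I expect the main obstacle to be the quantifier case of the inductive claim, which requires a careful interface between the open-cat notion of $\omega$-saturation and the continuous-logic quantifier $\sup_y$, routed through \fref{lem:OpenHausQuant}; everything else reduces to standard compactness and Stone-Weierstrass manipulations.
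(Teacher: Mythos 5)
Your proposal follows the same route as the paper: a Stone--Weierstrass-dense family of continuous functions on each $\tS_n(T)$ supplies the predicate symbols, \fref{lem:OpenHausQuant} handles the quantifier case via openness of the cat, and compactness plus density gives the homeomorphism of type-space functors. The one substantive divergence is in the treatment of $\hat T$: the paper axiomatizes it explicitly by approximating each $\hat\cL$-formula $\varphi$ with atomic symbols $\hat P_{\varphi,m}$ (via $\sup_{\bar x}|\hat P_{\varphi,m}-\varphi|\leq 2^{-m}$) together with small-image axioms ($\sup_{\bar x}\hat P\leq 2^{-m}$ whenever $P$ takes values $\leq 2^{-m}$ on $\tS_n(T)$), whereas you take $\hat T=\Th(\{M^{\hat\cL}\colon M\models T\})$. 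The explicit axioms buy two things: they let the surjectivity step run purely inside $\hat T$ (if $q\notin\operatorname{im}\theta_n$, separate it by a formula $\psi$ with $\psi$ identically $0$ on $\tS_n(T)$ and use the two axioms for $\hat P_{\psi,2}$ to force $\psi\leq 1/2$ in every model of $\hat T$, contradicting $\psi^q=1$), and they make quantifier elimination for $\hat T$ immediate from the axiom scheme. Your version of the surjectivity step leans on the assertion that the sentence $\inf_{\bar x}\bigvee_i(\varphi_i\dotminus r_i)\geq\varepsilon$ lies in $\hat T$, which presupposes that the correspondence $\varphi\mapsto\varphi_*$ holds in \emph{all} $M\models T$, not only the $\omega$-saturated ones you state it for; this is in fact true because complete models of a cat satisfy the Tarski--Vaught property so quantifiers evaluate correctly, but it needs to be said explicitly, since it is precisely what makes your $\hat T$ agree with the paper's and makes the reflection argument valid. (Likewise, your uniform-continuity remark --- that continuity for the compact topology on $\tS_n(T)$ implies uniform continuity for $d$ --- is what the paper proves directly via a compactness argument on partial types; the ``finer uniformity'' claim is correct for topometric spaces but deserves the same justification.)
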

\begin{proof}
  For each $n$, we choose a family $F_n \subseteq C(\tS_n(T),[0,1])$ which
  separates points, are closed under $\{\lnot,\dotminus,\frac{x}{2}\}$, and
  such that $d \in F_2$.
  By \fref{lem:OpenHausQuant} we may further assume that
  $\sup_y P(\bar x,y) \in F_n$ for each $P(\bar x,y) \in F_{n+1}$.
  We can always choose the $F_n$ such that $|F_n| \leq \kappa$ for all $n$,
  where $\kappa$ is as in the moreover part.
  By \fref{cor:SWCnct}, $F_n$ is dense in $C(\tS_n(T),[0,1])$.

  We observe that every $P \in F_n$ is uniformly continuous with respect
  to $d$.
  Indeed, $|P(\bar x) - P(\bar y)|$ is a continuous function from
  $\tS_{2n}(T)$, and for every $\varepsilon > 0$, the following partial type is
  necessarily inconsistent:
  $$\{|P(\bar x) - P(\bar y)| \geq \varepsilon\}
  \cup \{ d(x_i,y_i) \leq 2^{-m}\colon m<\omega,i<n\}.$$
  Therefore there is $m < \omega$ such that
  $d(\bar x,\bar y) \leq 2^{-m} \Longrightarrow |P(\bar x) - P(\bar y)| \leq \varepsilon$.

  Let $\hat \cL_n = \{\hat P\colon n < \omega, P \in F_n\}$, where we associate to
  each $n$-ary predicate symbol $\hat P$ the uniform continuity moduli
  obtained in the previous paragraph.
  Every universal domain of $T$, or closed subset thereof, is
  naturally a $\hat \cL$-structure, by interpreting each $\hat P$ as
  $P$.
  In particular, all the predicates satisfy the appropriate
  continuity moduli.

  Clearly, the family of $n$-ary definable functions is closed under
  continuous connectives.
  Also, if $\varphi(x_{<n})\colon \tS_n(T) \to [0,1]$ is a definable $n$-ary
  function and $f\colon n\to m$ is any mapping, then
  $\varphi(x_{i_0},\ldots,x_{i_{n-1}}) = \varphi \circ f^*$ is a definable $m$-ary
  function: in other words, the definable functions are closed under
  changes of variables.
  Finally, by \fref{lem:OpenHausQuant}, the definable functions
  are closed under quantification.
  Put together, every $\hat \cL$-formula $\varphi(x_{<n})$ induces a
  definable function $\varphi \in C(\tS_n(T),[0,1])$.
  Since $F_n$ is dense there, for every $m$ we can find
  $P_{\varphi,m} \in F_n$ such that $|P_{\varphi,m} - \varphi| \leq 2^{-m}$.
  Let $\hat T$ consist of:
  \begin{align*}
    & \sup_{\bar x} |\hat P_{\varphi,m}(\bar x) - \varphi(\bar x)| \leq 2^{-m}
    && \varphi, P_{\varphi,m} \text{ as above} \\
    & \sup_{\bar x} \hat P \leq 2^{-m}
    && P \in F_n \text{ and } P(\tS_n(T)) \subseteq[0,2^{-m}]
  \end{align*}
  Clearly, every model of $T$, viewed as an $\hat \cL$-structure, is a
  model of $\hat T$.

  We claim that $\tS(\hat T) \approx \tS(T)$.
  Indeed, for every $p \in \tS(T)$, define $\theta_n(p)$ by
  $\varphi^{\theta_n(p)} = \varphi(p)$ for every $\hat \cL$-formula $\varphi(x_{<n})$.
  Then $\theta_n(p)$ is just the $\hat \cL$-type of any realisation of $p$
  (again, viewing models of $T$ as $\hat \cL$-structure), so in
  particular is it indeed in $\tS_n(\hat T)$.
  Since the predicate symbols of $\hat \cL$ separate $T$-types,
  $\theta_n$ is injective.
  It is also clearly continuous, and therefore (as its domain is
  compact and its target Hausdorff) is a homeomorphism of $\tS_n(T)$
  with a closed subspace of $\tS_n(\hat T)$.
  If $\theta_n$ is not surjective, there is $q \in \tS_n(\hat T)$, and a
  neighbourhood $q \in U$ which is disjoint of the image of $\theta_n$.
  As usual there is a formula $\varphi$ such that
  $q \in [\varphi = 0] \subseteq [\varphi < 1/2] \subseteq U$, and letting $\psi = \lnot(\varphi \dotplus \varphi)$:
  $q \in [\psi = 1] \subseteq [0 < \psi] \subseteq U$, so $\psi(\tS_n(T)) = \{0\}$.
  Then $\hat T$ says that
  $\sup_{\bar x} |\psi(\bar x) - \hat P_{\psi,2}(\bar x)| \leq 1/4$
  and $\sup_{\bar x} \hat P_{\psi,2}(\bar x) \leq 1/4$, which in turn
  imply that $\psi \leq 1/2$ and therefore $\varphi \geq 1/4$, so $q$ is inconsistent
  with $\hat T$.

  Thus $\theta_n\colon \tS_n(T) \to \tS_n(\hat T)$ is a homeomorphism for every
  $n$, and by construction it is compatible with the functor structure
  so $\theta\colon \tS(T) \to \tS(\hat T)$ is the required homeomorphism of
  type-space functors.

  It is clear from the way we axiomatised it that $\hat T$ has
  quantifier elimination.
\end{proof}

\fref{thm:OpenHausCats}, combined \fref{fct:CFOIsOpenHausCat},
says that framework of continuous first order theories coincides with
that of metric open Hausdorff cats.
In fact, we know from \cite{BenYaacov:Morley} that if $T$ is a non-metric
Hausdorff cat then its home sort can be ``split'' into (uncountably
many) hyperimaginary metric sorts, so in a sense every Hausdorff cat
is metric.
Thus, with some care, this observation can be generalised to all open
Hausdorff cats.

Compact type-space functors are a structure-free and
language-free way of
presenting cats.
By \cite{BenYaacov:PositiveModelTheoryAndCats}, one can associate to each compact type-space
functor a positive Robinson theory $T'$ in some language $\cL'$ and
talk about universal domains for that cat as $\cL'$-structures.
Then ``complete models'' mean the same thing in both settings:

\begin{fct}
  Let $T$ be a complete continuous theory and $T'$ the corresponding
  cat (i.e., positive Robinson theory in a language $\cL'$).

  Then there is a one-to-one correspondence between
  \hbox{($\kappa$-)monster} models of
  $T$ and \hbox{($\kappa$-)universal} domains of $T'$ over the same set of
  elements (for a fixed big cardinal $\kappa$), in such a way that the type of a
  tuple in one is the same as its type in the other.

  Moreover, a closed subset of such a model is an elementary submodel in the
  sense of $T$ if and only if it is a complete submodel
  in the sense of $T'$, as defined in \cite{BenYaacov:Morley}.
\end{fct}
\begin{proof}
  For the first condition, the identification of the
  type-space functors of $T$ and of $T'$ imposes for each monster
  model (or universal domain) of one an interpretation of the
  other language, and it is straightforward (though tedious) to
  verify then that the new structure is indeed a universal domain (or
  a monster model) for the other theory.

  The moreover part is a special case of \fref{fct:TopTVTest}.
\end{proof}

Since $T$ is a metric cat, its monster model is a monster metric
space (momspace) as defined in \cite{Shelah-Usvyatsov:837},
and its models (in
the sense of continuous logic) are precisely the class $K_1^c$
studied there. So results proved for momspaces apply in our context.

\section{Imaginaries}
\label{sec:Img}

In classical first order model theory there are two common ways to
view (and define) imaginaries: as canonical parameters for formulae,
or, which is more common, as classes modulo definable equivalence
relations.
Of course, any canonical parameters for a formula can be viewed as an
equivalence class, and an equivalence class is a canonical parameter
for the formula defining it, so both approaches are quite equivalent
in the discrete setting.

We have already observed that in the passage from discrete to
continuous logic equivalence relations are replaced with
pseudo-metrics.
On the other hand, the notion of a canonical parameter remains
essentially the same: the canonical parameter for $\varphi(\bar x,\bar a)$
is something (a tuple, an imaginary\ldots) $c$ which an automorphism fixes if
and only if it does not alter the meaning of the formula
(i.e., $c = f(c) \Longleftrightarrow \varphi(\bar x,\bar a) \equiv \varphi(x,f(\bar a))$ for every
$f \in \Aut(\fM)$).

As in the classical setting, both approaches are essentially
equivalent, but in practise the canonical parameter
approach has considerable advantages.
In particular, when doing stability, we would need to consider
canonical parameters for definable predicates $\psi(\bar x,A)$, which only
has finitely many free variables but may have infinitely many
parameters.
Canonical parameters for such definable predicates are dealt with as
with canonical parameters for formulae, and the existence of
infinitely many parameters introduces very few additional;
complications.
On the other hand, if we wished to define the canonical parameter as
an equivalence class
modulo a pseudo-metric we would be forced to consider pseudo-metrics on
infinite tuples, the logic for whose equivalence classes could become
messy.

Other minor advantages include the fact that we need not ask
ourselves whether a particular formula defines a pseudo-metric on every
structure or only on models of a given theory, and finally the
conceptually convenient fact
that unlike equivalence relations which need to be replaced with
pseudo-metrics, canonical parameters are a familiar notion which we
leave unchanged.

Let $\cL$ be a continuous signature.
For convenience, assume that $\cL$ has a single sort $S$.
If we wanted to work with a many-sorted language we would have to keep
track on which variables (in the original language) belong to which
sort, but other than that the treatment is identical.

Let us start with the case of a formula $\varphi(x_{<n},y_{<m})$, where
$x_{<n}$ are the free variables, and $y_{<m}$ are
the parameter variables (and to simplify notation we will consider
that this partition of the variables is part of $\varphi$).
We define a new signature $\cL_\varphi$, consisting of $\cL$ along with a
new sort $S_\varphi$ for the canonical parameters for instances
$\varphi(\bar x,\bar a)$ of $\varphi$, and the following new symbols:
\begin{enumerate}
\item A predicate symbol $d_\varphi(z,z')$ on $S_\varphi^2$ which will be
  the distance symbol for $S_\varphi$.
\item A predicate symbol $P_\varphi(x_{<n},z)$ on $S^n\times S_\varphi$.
  Its uniform continuity moduli with respect to the first $n$
  arguments are the same as $\varphi$'s with respect to $x_{<n}$, and with
  respect to the last argument it is the identity.
  (This symbol is not strictly necessary, but will be convenient.)
\end{enumerate}

We will expand every $\cL$-structure $M$ to an $\cL_\varphi$-structure,
interpreting $S_\varphi$ as the family the canonical parameters of all
instances $\varphi(\bar x,\bar a)$ of $\varphi(\bar x,\bar y)$ in $M$.
We first expand $M$ into an $\cL_\varphi$-pre-structure $M_{\varphi,0}$ by defining:
\begin{align*}
  & S_\varphi^{M_{\varphi,0}} = M^m \\
  & P_\varphi^{M_{\varphi,0}}(\bar a,(\bar b)) = \varphi^M(\bar a,\bar b) \\
  & d_\varphi^{M_{\varphi,0}}((\bar b),(\bar b')) =
  {\sup}_{\bar x}\,|\varphi(\bar x,\bar b)-\varphi(\bar x,\bar b')|^M
\end{align*}
(Here $\bar b$ is an $m$-tuple in the home sort of $M$, $(\bar b)$ is
the corresponding element of $S_\varphi^{M_{\varphi,0}}$.)
We leave the reader the verification that $d_\varphi^{M_{\varphi,0}}$ is a
pseudo-metric and the uniform continuity
moduli fixed above are indeed respected, so $M_{\varphi,0}$ is an
$\cL_\varphi$-pre-structure.
We then define $M_\varphi = \widehat{M_{\varphi,0}}$, the structure associated to
the pre-structure $M_{\varphi,0}$.

For $\bar b \in M^m$, let $[\bar b]_\varphi$ denote the
image of $(\bar b)$ in $S_\varphi^{M_\varphi}$.
Clearly, every automorphism of $M$ extends uniquely to an automorphism
of $M_\varphi$, and it fixes $\varphi(\bar x,\bar b)$ if and only if it fixes
$[\bar b]_\varphi$, which is therefore a canonical parameter for
$\varphi(\bar x,\bar b)$.
If $c = [\bar b]_\varphi$ then $P_\varphi(\bar x,c)$ coincides with $\varphi(\bar x,\bar
b)$.
We therefore allow ourselves to abuse notation and denote either one
simply as $\varphi(\bar x,c)$.

The properties of the new sort are described intuitively by the
following axioms:
\begin{gather*}
  \forall zz'\,
  \big( d_\varphi(z,z') = {\sup}_{\bar x}|P_\varphi(\bar x,z)-P_\varphi(\bar x,z')| \big), \\
  \forall z\,\exists\bar y\,\forall\bar x\,
  \big( \varphi(\bar x,\bar y) = P_\varphi(\bar x,z) \big), \\
  \forall\bar y\,\exists z\,\forall\bar x\,
  \big( \varphi(\bar x,\bar y) = P_\varphi(\bar x,z) \big).
\end{gather*}
To get the precise axioms we apply \fref{rmk:AEntn},
so``$\forall\exists\forall (\ldots=\ldots)$''  should be read as
``$\sup \inf \sup |\ldots-\ldots| = 0$'', etc.
We therefore define $T_\varphi$ to be the following $\cL_\varphi$-theory:
\begin{gather*}
  \sup_{zz'}\, \big|
  d_\varphi(z,z') - {\sup}_{\bar x}|P_\varphi(\bar x,z)-P_\varphi(\bar x,z')|
  \big| = 0, \\
  \sup_z \inf_{\bar y} \sup_{\bar x}
  \big| \varphi(\bar x,\bar y) - P_\varphi(\bar x,z) \big| = 0, \\
  \sup_{\bar y} \inf_z \sup_{\bar x}
  \big| \varphi(\bar x,\bar y) - P_\varphi(\bar x,z) \big| = 0.
\end{gather*}

One easily verifies that:
\begin{prp}
  An $\cL_\varphi$-structure is a model of $T_\varphi$ if and only if it is of the
  form $M_\varphi$ for some $\cL$-structure $M$.
\end{prp}
Therefore, if $T$ is a complete $\cL$-theory then $T\cup T_\varphi$ is a
complete $\cL_\varphi$-theory.
We discussed the case of a single formula $\varphi$, but we can do the same
with several (all) formulae simultaneously.

\begin{rmk}
  \label{rmk:PsMetImg}
  As we said earlier, the continuous analogue of an equivalence
  relation is a pseudo-metric.
  We can recover classes modulo pseudo-metrics from canonical
  parameters in very straightforward manner:
  \begin{enumerate}
  \item Assume that $\varphi(\bar x,\bar y)$ defines a pseudo-metric on
    $M^n$.
    Then the pseudo-metric $d_\varphi^{M_{\varphi,0}}$
    (defined on $S_\varphi^{M_{\varphi,0}} = M^n$) coincides with $\varphi^M$,
    and $(S_\varphi,d_\varphi)^{M_\varphi}$ is the completion of the set of equivalence
    classes of $n$-tuples modulo the relation $\varphi(\bar a,\bar b)$,
    equipped with the induced metric.
  \item In particular, let $\xi_n(x_{<n},y_{<n}) = \bigvee_{i<n} d(x_i,y_i)$.
    Then the sort $S_{\xi_n}$ is the sort of $n$-tuples with the
    standard metric.
  \end{enumerate}
\end{rmk}

We now wish to define canonical parameters to definable predicates of
the form $\psi(\bar x,B)$, i.e. to instances of $\psi(\bar x,Y)$, which may
unavoidably have infinitely many parameters (or parameter variables).
We can write $\psi(\bar x,Y)$ as a uniform limit of a sequence of
formulae $(\varphi_n(\bar x,\bar y_n)\colon n < \omega)$, where $(\bar y_n)$ is an
increasing sequence of tuples and $Y = \bigcup_n \bar y_n$.
We may further assume that the rate of convergence is such that
$|\varphi_n - \psi| \leq 2^{-n}$.
Since each $\bar y_n$ is finite, we may assume that $|Y| = \omega$.

We define $\cL_\psi$ by adding to $\cL$ a new sort $S_\psi$
and predicate symbols $d_\psi(z,z')$ and $P_\psi(\bar x,z)$ as
before.

Given a structure $M$ we construct $M_{\psi,0}$ much the same as
before:
\begin{align*}
  & S_\psi^{M_{\psi,0}} = M^\omega \\
  & P_\psi^{M_{\psi,0}}(\bar a,(B)) = \psi^M(\bar a,B)
  \, (= \lim \varphi_n^M(\bar a,\bar b_n)) \\
  & d_\psi^{M_{\psi,0}}((B),(B')) =
  {\sup}_{\bar x}\,|\psi(\bar x,B)-\psi(\bar x,B')|^M
\end{align*}
Again $M_{\psi,0}$ is a pre-structure, and we define
$M_\psi = \widehat{M_{\psi,0}}$.
Letting $[B]_\psi$ be the image of $(B)$ in $M_\psi$, we see again that
$[B]_\psi$ is a canonical parameter for $\psi(\bar x,B)$.
If $c = [B]_\psi$ then again we allow ourselves the abuse of notation
which consists of denoting either of $\psi(\bar x,B)$ and $P_\psi(\bar x,c)$
(which are equivalent) by $\varphi(\bar x,c)$.

Notice that $\cL_\psi$ is a finitary language in which the canonical
parameters are singletons, and the fact that
$Y$ is an infinite tuple is indeed hidden.

The theory $T_\psi$, the analogue of $T_\varphi$ above,
consists of
infinitely many axioms, according to the following schemes, which we
interpret according to \fref{rmk:AEntn} as above:
\begin{gather*}
  \forall zz'\,\big( d_\psi(z,z') =
  {\sup}_{\bar x}|P_\psi(\bar x,z)-P_\psi(\bar x,z')| \big), \\
  \forall z\,\exists\bar y_n\,\forall\bar x\,
  |\varphi_n(\bar x,\bar y_n) - P_\psi(\bar x,z)| \leq 2^{-n}, \\
  \forall y_n\,\exists\bar z\,\forall\bar x\,
  |\varphi_n(\bar x,\bar y_n) - P_\psi(\bar x,z)| \leq 2^{-n}
\end{gather*}
Again, the models of $T_\psi$ are precisely the $\cL_\psi$-structures of the
form $M_\psi$.

\begin{rmk}
  It is not true that if $T$ is model complete
  then so is $T \cup T_\varphi$.

  In the case of canonical parameters for a single formula $\varphi$
  we can remedy this deficiency as in discrete logic by
  naming the mapping $\bar b \mapsto [\bar b]_\varphi$ with a new
  function symbol $\pi_\varphi\colon S^m \to S_\varphi$.
  We leave it to the reader to verify that,
  if $T$ is a model complete $\cL$-theory then $T \cup T'_\varphi$ is a
  model complete $\cL_\varphi'$-theory, where
  \begin{gather*}
    \cL_\varphi' = \cL_\varphi \cup \{\pi_\varphi\}, \qquad
    T'_\varphi = T_\varphi \cup \{\forall\bar x\bar y\, \big(
    \varphi(\bar x,\bar y) = P_\varphi(\bar x,\pi_\varphi(\bar y))
    \big)\}.
  \end{gather*}

  The \emph{graph} of $\pi_\varphi$ (or of any function of continuous
  structures) is defined here to be the predicate
  $\gamma_\varphi(\bar y,z) = d_\varphi(\pi_\varphi(\bar y),z)$.
  This predicate is definable in $\cL_\varphi$:
  $$\gamma_\varphi(\bar y,z)
  = {\sup}_{\bar x} |\varphi(\bar x,\bar y) - P_\varphi(\bar x,z)|.$$
  It follows that the addition of
  $\pi_\varphi$ to the language does not add any structure
  (see \cite[Section~1]{BenYaacov:DefinabilityOfGroups}
  for a more detailed discussion
  of definable functions in continuous logic).

  In the case of canonical parameters of
  $\psi(\bar x,Y) = \lim \varphi_n(\bar x,\bar y_n)$, which are quotients of
  infinite tuples, we cannot add a function
  symbol $\pi_\varphi$.
  Instead we observe that in the case of a single formula it would
  have sufficed to name $\gamma_\varphi$ by a predicate (rather than naming
  $\pi_\varphi$).
  While $\gamma_\psi$ would depend infinitely many variables and thus still
  impossible to add to the language, we may add finite approximations.
  We add predicate symbols $\gamma_{\varphi_n,\psi}(\bar y_n,z)$
  and add to $T_\psi$ the axioms:
  \begin{align*}
    \forall\bar y_nz\, \big( \gamma_{\varphi_n,\psi}(\bar y_n,z) =
    {\sup}_{\bar x} |\varphi_n(\bar x,\bar y_n) - P_\psi(\bar x,z)| \big).
  \end{align*}
  Call the expanded language $\cL_\psi'$ and the expanded theory $T'_\psi$.
  Again, we leave it to the reader to verify that if $T$ is a
  model complete $\cL$-theory, then so is $T \cup T'_\psi$ as an
  $\cL_\psi'$-theory.

  We leave the details to the interested reader.
\end{rmk}

\section{Local types and $\varphi$-predicates}
\label{sec:LocTyp}

In this section and later
we will consider formulae whose free variables are split in two groups
$\varphi(\bar x,\bar y)$.
Following \fref{rmk:PsMetImg}, we may replace finite tuples of
variables with single ones, and therefore allow ourselves to restrict
our attention to formulae of the form $\varphi(x,y)$, where $x$ and $y$ may
belong to distinct sorts.

We will associate variable letters with sorts: $x$, $x_i$,
etc., belong to one sort, $y$, $y_j$, etc., to another, and so forth.
Accordingly, we will denote the difference spaces of types in the
variable $x$ by $\tS_x(T)$, $\tS_x(A)$, etc.

We now fix a formula $\varphi(x,y)$.
We will define spaces of $\varphi$-types as quotients
of spaces we have already constructed using \fref{fct:FuncQuot}.
The notion of a $\varphi$-type over a model is fairly straightforward.
A few more steps will be required in order to obtain the correct
notion of a $\varphi$-type over an arbitrary set.

\begin{dfn}
  Let $M$ be a structure.
  We define $\tS_\varphi(M)$ as the quotient of $\tS_x(M)$ given by the
  family of functions
  $\sA_{M,\varphi} = \{\varphi(x,b)\colon b \in M\}$.
  An element of $\tS_\varphi(M)$ is called a
  \emph{(complete) $\varphi$-type} over $M$.

  Accordingly, the \emph{$\varphi$-type of $a$ over $M$}, denoted
  $\tp_\varphi(a/M)$, is given by the mappings $\varphi(x,b) \mapsto \varphi(a,b)$ where $b$
  varies over all elements of the appropriate sort of $M$.
\end{dfn}

We equip $\tS_\varphi(M)$ with a metric structure:
For $p,q \in \tS_\varphi(M)$ we define
$$d(p,q) = \sup_{b \in M} |\varphi(x,b)^p - \varphi(x,b)^q|.$$
\begin{fct}
  Equipped with this metric and with its natural topology (inherited
  as a quotient space from $\tS_x(M)$),
  $\tS_\varphi(M)$ is a compact topometric space as in
  \fref{dfn:TopoMetric}.
\end{fct}

\begin{dfn}
  Let $M$ be a structure.
  A \emph{$\varphi$-predicate} over $M$, or an \emph{$M$-definable
    $\varphi$-predicate}, is a continuous mapping
  $\psi\colon \tS_\varphi(M) \to [0,1]$.
\end{dfn}

\begin{fct}
  Let $\psi\colon \tS_x(M) \to [0,1]$ be an $M$-definable predicate.
  Then the following are equivalent:
  \begin{enumerate}
  \item $\psi$ is a $\varphi$-predicate (i.e., factors through the projection
    $\tS_x(M) \to \tS_\varphi(M)$).
  \item There are formulae $\psi_n(x,\bar b_n)$, each obtained using
    connectives from several instances $\varphi(x,b_{n,j})$, where each
    $b_{n,j} \in M$, and in $M$ we have $\psi(x) = \flim \psi_n(x)$.
  \item $\psi$ can be written as $f \circ (\varphi(x,b_i)^M\colon i < \omega)$ where
    $f\colon [0,1]^\omega \to [0,1]$ is continuous and $b_i \in M$ for all $i < \omega$.
  \end{enumerate}
\end{fct}
\begin{proof}
  \begin{cycprf}
  \item[\impnext] Standard application of
    \fref{cor:SWCnct}.
  \item[\impnfirst] Immediate.
  \end{cycprf}
\end{proof}

\begin{lem}
  \label{lem:ModelPhiType}
  Let $\fM$ be a monster model, and $M \preceq \fM$ a model.
  Let $\psi(x)$ be an $M$-invariant $\varphi$-predicate over $\fM$.
  Then $\psi$ is (equal to) a $\varphi$-predicate over $M$.
\end{lem}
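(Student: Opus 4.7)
The plan is to exhibit the $M$-definable descent $\tilde\psi$ of $\psi$ as a uniform limit of $\varphi$-formulae with parameters in $M$; this will display it as a $\varphi$-predicate over $M$. The key mechanism is to push parameters from $\fM$ down to $M$ by means of the elementarity $M \preceq \fM$ applied to an auxiliary $M$-definable predicate built out of $\psi$ and its $\fM$-approximants.

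First, by \fref{lem:InvarPred}, the $M$-invariance of $\psi$ yields an $M$-definable predicate $\tilde\psi \in C(\tS_x(M),[0,1])$ which agrees with $\psi$ on $\fM$. Since $\psi$ is a $\varphi$-predicate over $\fM$, I choose \emph{parameter-free} formulae $\psi_n(x,\bar y_n)$, each a continuous-connective combination of finitely many $\varphi$-instances, together with finite tuples $\bar b_n \in \fM$ such that
\[
\sup_{x\in\fM}\bigl|\psi_n(x,\bar b_n) - \psi(x)\bigr| \leq 2^{-n}.
\]

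For each $n$, define the auxiliary predicate
\[
\theta_n(\bar y_n) = \sup_x \bigl|\psi_n(x,\bar y_n) - \tilde\psi(x)\bigr|.
\]
Substituting $x \mapsto \sigma x$ inside the supremum for any $\sigma \in \Aut(\fM/M)$, and using that $\psi_n$ is parameter-free (hence $\Aut(\fM)$-invariant) while $\tilde\psi$ is $M$-invariant, one checks directly that $\theta_n(\sigma\bar y_n) = \theta_n(\bar y_n)$. Hence $\theta_n$ is $M$-invariant, so by a second application of \fref{lem:InvarPred} it is itself an $M$-definable predicate in $\bar y_n$, and evaluation at $\bar b_n$ gives $\theta_n(\bar b_n) \leq 2^{-n}$.

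Finally, apply the elementarity $M \preceq \fM$ to the $\cL(M)$-sentence $\inf_{\bar y_n}\theta_n$: since its value is at most $2^{-n}$ in $\fM$, the same holds in $M$, so I can choose $\bar c_n \in M$ with $\theta_n(\bar c_n) \leq 2^{-n+1}$. Then $\psi_n(x,\bar c_n)$ is a $\varphi$-formula over $M$, and these converge uniformly to $\tilde\psi$, witnessing that $\tilde\psi$ is a $\varphi$-predicate over $M$. The delicate step is the $M$-invariance of $\theta_n$: it is precisely the combination of $\psi_n$ being a parameter-free formula (no hidden $\fM$-parameters) with the $M$-invariance of $\tilde\psi$ via the substitution $x\mapsto\sigma x$ that makes $\theta_n$ live on $\tS_{\bar y_n}(M)$ rather than only on $\tS_{\bar y_n}(\fM)$; once this is in place the remaining density/elementarity step is routine.
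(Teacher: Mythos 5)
Your proof is correct and takes essentially the same route as the paper's: use \fref{lem:InvarPred} to descend $\psi$ to an $M$-definable predicate, then push the $\varphi$-formula parameters from $\fM$ into $M$ via elementarity applied to the sentence $\inf_{\bar y_n}\sup_x|\cdot|$. The only cosmetic difference is that the paper first replaces $\tilde\psi$ by an honest $\cL(M)$-formula approximant $\psi_n(x,c_n)$ before applying $M\preceq\fM$, so that the relevant sentence is literally first-order, whereas you work directly with the definable predicate $\tilde\psi$ (and the second invocation of \fref{lem:InvarPred} is unnecessary, since $\theta_n$ is already $M$-definable outright as $\sup_x$ of an $M$-definable predicate); both versions are fine.
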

\begin{proof}
  We know that $\psi(x)$ is equal to a definable predicate over $M$, so
  $\psi(x) = \lim \psi_n(x,c_n)$ where each $\psi_n(x,z_n)$ is a formula and
  $c_n \in M$.
  We also know that $\psi(x) = \lim \chi_n(x,d_n)$, where each
  $\chi_n(x,\bar d_n)$ is a combination of instances $\varphi(x,d_{n,j})$ with
  parameters $d_{n,j} \in \fM$.
  For all $\varepsilon > 0$ there exists $n < \omega$ such that:
  \begin{align*}
    \fM & \models {\sup}_x |\psi_n(x,c_n)-\chi_n(x,\bar d_n)| < \varepsilon
    \intertext{In particular:}
    \fM & \models {\inf}_{\bar y_n} {\sup}_x
    |\psi_n(x,c_n)-\chi_n(x,\bar y_n)| < \varepsilon
    \intertext{Since $M \preceq \fM$:}
    M & \models {\inf}_{\bar y_n} {\sup}_x
    |\psi_n(x,c_n)-\chi_n(x,\bar y_n)| < \varepsilon
    \intertext{So there is $\bar d_n' \in M$ such that:}
    M & \models  {\sup}_x |\psi_n(x,c_n)-\chi_n(x,\bar d_n')| < \varepsilon.
  \end{align*}
  We can therefore express $\psi$ as $\lim \chi_n(x,\bar d_n')$, which is a
  $\varphi$-predicate over $M$.
\end{proof}

This leads to the following:
\begin{dfn}
  Let $A$ be a (small) set in a monster model $\fM$.
  \begin{enumerate}
  \item A \emph{$\varphi$-predicate over $A$}, or an \emph{$A$-definable
      $\varphi$-predicate} is a $\varphi$-predicate over
    $\fM$ which is $A$-invariant.
  \item We define $\tS_\varphi(A)$ as the quotient of $\tS_\varphi(\fM)$
    determined by the $A$-definable $\varphi$-predicates.
    The points of this space are called \emph{(complete) $\varphi$-types
      over $A$}.
  \item Accordingly, the \emph{$\varphi$-type} $\tp_\varphi(a/A)$ is given by
    the mappings $\psi(x) \mapsto \psi(a)$ where $\psi$ varies over all $A$-definable
    $\varphi$-predicates.
  \end{enumerate}
\end{dfn}

By \fref{lem:ModelPhiType}, these definitions coincide with
previous ones in case that $A = M \preceq \fM$.

We conclude with a result about compatibility of two kinds of
extensions of local types: to the algebraic closure of the
set of parameters, and to more (all) formulae.
\begin{lem}
  \label{lem:LocAclGlobCompat}
  Let $p \in \tS_\varphi(A)$, and let $q \in \tS_\varphi(\acl(A))$ and $r \in \tS_x(A)$
  extend $p$.
  Then $q\cup r$ is consistent.
\end{lem}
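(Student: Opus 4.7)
The plan is to combine the Compactness Theorem (\fref{cor:Cpt}) with an $A$-definable $\varphi$-predicate built from the algebraicity of the parameters of $q$. Fix a monster model $\fM$, a realisation $a \models r$, and a realisation $b \models q$; then $\tp_\varphi(a/A) = p = \tp_\varphi(b/A)$. Any finite $q_0 \subseteq q$ can be written as $\{\psi_i(x,\bar c) = s_i : i \leq k\}$, where $\bar c$ is a finite tuple from $\acl(A)$, each $\psi_i$ is a $\varphi$-combination, and $s_i = \psi_i(b,\bar c)$. By \fref{cor:Cpt}, it is enough to find, for each such $q_0$, some element of $\fM$ realising $r \cup q_0$.

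For this I would consider the function
\[
\psi^*(x) \;=\; \inf_{\bar c' \equiv_A \bar c}\, \max_{i \leq k} \bigl|\psi_i(x,\bar c') - s_i\bigr|,
\]
the infimum being over the set $K$ of $A$-conjugates of $\bar c$. Since each coordinate of $\bar c$ is algebraic over $A$, \fref{lem:AlgTyp} ensures $K$ is a compact $A$-invariant set. I claim $\psi^*$ is an $A$-definable $\varphi$-predicate. The $A$-invariance is immediate. For continuity on $\tS_\varphi(\fM)$ in the logic topology: the map $(p,\bar c') \mapsto \max_i |\psi_i^p(\bar c') - s_i|$ is jointly continuous on $\tS_\varphi(\fM) \times K$---pointwise logic-topology convergence forces $\varphi^{p_\alpha}(y) \to \varphi^p(y)$ at each fixed $y$, and the uniform continuity modulus of $\varphi$ in its parameters (\fref{prp:UnifContLanguage}) upgrades this to uniform convergence over the compact $K$---and the infimum of a jointly continuous function over a compact set is continuous.

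Setting $\bar c' = \bar c$ gives $\psi^*(b) = 0$, and since $\tp_\varphi(a/A) = \tp_\varphi(b/A)$ and $\psi^*$ is $A$-definable, $\psi^*(a) = 0$ as well. Compactness of $K$ yields $\bar c' \in K$ with $\psi_i(a,\bar c') = s_i$ for all $i \leq k$. By strong homogeneity of $\fM$ pick $\sigma \in \Aut(\fM/A)$ with $\sigma(\bar c') = \bar c$; then $a' := \sigma(a)$ still realises $r$ (because $r$ is over $A$) and satisfies $\psi_i(a',\bar c) = \psi_i(a,\bar c') = s_i$ for all $i$, so $a' \models r \cup q_0$.

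The main obstacle will be verifying that $\psi^*$ really is a $\varphi$-predicate---continuous on $\tS_\varphi(\fM)$ rather than merely a well-defined $A$-invariant function of $\tp_\varphi(x/\fM)$---since this uses both the uniform continuity of $\varphi$ in its parameters and the compactness of $K$, and it is precisely the step at which the hypothesis $\bar c \in \acl(A)$ (as opposed to an arbitrary parameter set) becomes indispensable.
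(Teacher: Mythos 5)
Your proof is correct and rests on the same technical core as the paper's own argument: the algebraicity of the $\acl(A)$-parameters makes their $A$-conjugate class $K$ metrically compact (via \fref{lem:AlgTyp}), so the infimum over $K$ of a $\varphi$-predicate is again a continuous $\varphi$-predicate, and being $A$-invariant it is $A$-definable; evaluating it at the two realisations transfers the constraint from one side to the other. The packaging, however, is genuinely different. The paper argues by contradiction on the topological side: it forms the closed set $R_2 \subseteq \tS_\varphi(\acl(A))$ of extensions of $p$ compatible with $r$, separates $q$ from $R_2$ by an $\acl(A)$-definable $\varphi$-predicate $\psi(x,a)$, then shows that $\chi(x) = \inf\{\psi(x,a') : a' \equiv_A a\}$ forces $\chi^p$ to be simultaneously $0$ and $1$. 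You instead run a direct compactness/finite-satisfiability argument, building a realisation of $r$ together with each finite piece of $q$ and letting compactness glue them. Your route avoids ever introducing $R_2$ and may feel more constructive; the paper's avoids the bookkeeping over finite subsets of $q$ and is slightly shorter. One imprecision worth flagging: the conditions axiomatising $q$ involve $\acl(A)$-definable $\varphi$-predicates, whose natural parameters are canonical parameters lying in (imaginary sorts of) $\acl(A)$, not merely finite $\varphi$-combinations with home-sort parameters; your description of $q_0$ as a family of $\varphi$-combinations over a home-sort tuple $\bar c \in \acl(A)$ is therefore a little too narrow. This is harmless---the whole argument goes through verbatim if $\bar c$ is allowed to contain such imaginaries, and $K$ is taken to be their conjugate class, which is still metrically compact---but without that reading the compactness reduction to your chosen form of $q_0$ is not fully justified.
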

\begin{proof}
  Let $R_1 \subseteq \tS_x(\acl(A))$ be the pullback of $r$ (i.e., the set of
  all its extensions to a complete type over $\acl(A)$), and
  $R_2 \subseteq \tS_\varphi(\acl(A))$ the image of $R_1$ under the restriction
  projection  $\tS_x(\acl(A)) \to \tS_\varphi(\acl(A))$.
  Then $R_2$ is the set of all extensions of $p$ to $\acl(A)$
  compatible with $r$, and we need to show that $q \in R_2$ (i.e., that
  $R_2$ is the set of all the extensions of $p$).

  Indeed, assume not.
  The sets $R_1$ and therefore $R_2$ are closed.
  Therefore we can separate $R_2$ from $q$ by a $\varphi$-predicate
  $\psi(x,a)$, with parameter $a \in \acl(A)$, such that $\psi(x,a)^q = 0$ and
  $R_2 \subseteq [\psi(x,a) = 1]$.
  Since $a \in \acl(A)$, by \fref{lem:AlgTyp}
  there is a sequence $(a_i\colon i<\omega)$ such that:
  \begin{enumerate}
  \item Every $a_i$ is an $A$-conjugate of $a$.
  \item For every $\varepsilon > 0$ there is $n$ such that every
    $A$-conjugate of $a$ is in the $\varepsilon$-neighbourhood of some $a_i$
    for $i < n$.
  \end{enumerate}
  Define $\psi_n(x,a_{<n}) = \bigwedge_{i<n} \psi(x,a_i)$.
  Then, by uniform continuity of $\psi(x,y)$ with respect to $y$, the
  sequence $(\psi_n(x,a_{<n})\colon n < \omega)$ converges uniformly
  to the predicate
  $\chi(x) = \inf \{\psi(x,a')\colon a' \equiv_A a\}$.
  Thus $\chi(x)$ is a definable $\varphi$-predicate (as a limit of such) and
  $A$-invariant, so it is an $A$-definable $\varphi$-predicate.
  On the one hand we clearly have
  $\chi(x)^p = \chi(x)^q = 0$.
  On the other, as $R_2$ is $A$-invariant as well,
  we have $R_2 \subseteq [\psi(x,a_i) = 1]$ for all $i < \omega$, so
  $\chi(x)^p = \chi(x)^r = 1$.
  This contradiction concludes the proof.
\end{proof}

\begin{lem}
  \label{lem:LocAclTrans}
  Let $A \subseteq M$ where $M$ is strongly $(|A|+\omega)^+$-homogeneous, and
  $p \in \tS_\varphi(A)$.
  Then $\Aut(M/A)$ acts transitively on the extensions of $p$ to
  $\tS_\varphi(\acl(A))$.
\end{lem}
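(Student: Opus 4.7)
The plan is to produce realisations $a_1,a_2\in M$ of full $x$-types extending $q_1$ and $q_2$ respectively that share a common complete type over $A$, and then invoke the strong homogeneity of $M$. Fix two extensions $q_1,q_2\in\tS_\varphi(\acl(A))$ of $p$.

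First, pick any complete extension $r\in\tS_x(A)$ of $p$; such an $r$ exists because $p$ is a consistent partial $\varphi$-type. Applying \fref{lem:LocAclGlobCompat} to $p\subseteq q_i$ and $r$, the set $r\cup q_i$ is consistent for each $i=1,2$, and so extends to a complete type $r_i\in\tS_x(\acl(A))$ with $r_i\supseteq q_i\cup r$.

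Second, realise $r_1$ and $r_2$ inside $M$ by elements $a_1,a_2$ respectively. This step requires $M$ to be saturated enough over $\acl(A)$; the point is that the density character of $\acl(A)$ is at most $|A|+|\cL|+\omega$, since by \fref{lem:AlgTyp} each $A$-conjugacy class of an algebraic element is compact and hence separable, and there are at most $|\cL(A)|$ such classes. In the intended setting---where $M$ is strongly $(|A|+\omega)^+$-homogeneous inside a surrounding monster model of $T$---such realisations are available in $M$ (or may be produced in the monster and then transported by the strong homogeneity of $M$). By construction $\tp(a_1/A)=\tp(a_2/A)=r$, so the assignment $a_1\mapsto a_2$ fixing $A$ pointwise is partial elementary on a set of cardinality at most $|A|+\omega$; by strong $(|A|+\omega)^+$-homogeneity of $M$ it extends to some $f\in\Aut(M/A)$.

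Third, verify $f\cdot q_1=q_2$. Because $f$ fixes $A$ pointwise it setwise permutes $\acl(A)$, so it acts naturally on $\tS_x(\acl(A))$ and on its $\varphi$-quotient $\tS_\varphi(\acl(A))$ by moving parameters. Unwinding definitions, $f\cdot\tp(b/\acl(A))=\tp(f(b)/\acl(A))$; in particular $f\cdot r_1=\tp(a_2/\acl(A))=r_2$. Restricting to $\varphi$-predicates gives $f\cdot q_1=q_2$, as required.

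The main obstacle is the second step---producing the realisations $a_1,a_2$ inside $M$ rather than in a larger monster. Everything else is essentially bookkeeping: the real structural input is \fref{lem:LocAclGlobCompat}, which is exactly what guarantees that the two $\varphi$-extensions $q_1,q_2$ admit complete $x$-type completions sitting over a common $r\in\tS_x(A)$. Once that common ``global anchor'' $r$ is fixed, the problem collapses into a standard partial-elementary-map-plus-homogeneity argument.
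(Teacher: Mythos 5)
Your proof is correct and is essentially the argument the paper intends: the paper's own ``proof'' is only the one-line remark that the lemma ``follows from (and is in fact equivalent to)'' \fref{lem:LocAclGlobCompat}, and your write-up fills in exactly the right details. In particular, the crucial move of choosing a single anchor $r\in\tS_x(A)$ extending $p$ and invoking \fref{lem:LocAclGlobCompat} twice to complete $q_1\cup r$ and $q_2\cup r$ to $r_1,r_2\in\tS_x(\acl(A))$ is precisely what makes the cited lemma do the work; without a common $r$ the realisations $a_1,a_2$ would not have the same type over $A$ and homogeneity could not be applied. The verification that $f\cdot q_1=\tp_\varphi(f(a_1)/\acl(A))=q_2$ is also correct, using that $f$ fixes $A$ pointwise and therefore $\acl(A)$ setwise.

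The one point worth flagging is the same one you flag yourself: realising $r_1,r_2$ inside $M$ is not literally guaranteed by the hypothesis ``strongly $(|A|+\omega)^+$-homogeneous'' alone, since that is a homogeneity condition, not a saturation condition, and in any case $r_i$ lives over $\acl(A)$, whose density character is bounded only by $|A|+|\cL|+\omega$. This is a minor imprecision in the paper's statement rather than in your argument: in the paper's actual application of the lemma, the model is additionally assumed to be $(|A|+\omega)^+$-saturated, which is what one would invoke here (with the implicit understanding that $|\cL|$ is not large relative to $|A|$, or else that one works inside a monster and passes partial elementary maps back to $M$ by homogeneity as you indicate). So your proof takes the same route as the paper, and your cardinality caveat is a legitimate observation about how the hypotheses should be read.
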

\begin{proof}
  Follows from (and is in fact equivalent to)
  \fref{lem:LocAclGlobCompat}.
\end{proof}

\section{Local stability}
\label{sec:LocStab}

Here we answer C.\ Ward Henson's question mentioned in the
introduction.
Throughout this section $T$ is a fixed continuous theory
(not necessarily complete) in a signature $\cL$.

\begin{dfn}
  \begin{enumerate}
  \item We say that a formula $\varphi(x,y)$ is \emph{$\varepsilon$-stable} for a real
    number $\varepsilon > 0$ if in models of $T$ there is no
    infinite sequence $(a_ib_i\colon i < \omega)$ satisfying for all
    $i < j$: $|\varphi(a_i,b_j) - \varphi(a_j,b_i)| \geq \varepsilon$.
  \item We say that $\varphi(x,y)$ is \emph{stable} if it is $\varepsilon$-stable for
    all $\varepsilon > 0$.
  \end{enumerate}
\end{dfn}

\begin{lem}
  \label{lem:EStableEquiv}
  Let $\varphi(x,y)$ be a formula, $\varepsilon > 0$.
  Then the following are equivalent:
  \begin{enumerate}
  \item The formula $\varphi$ is $\varepsilon$-stable.
  \item It is impossible to find
    $0 \leq r < s \leq 1$ and an infinite sequence $(a_ib_i\colon i < \omega)$ such
    that $r \leq s-\varepsilon$ and for all $i<j$:
    $\varphi(a_i,b_j) \leq r$, $\varphi(a_j,b_i)\geq s$.
  \item There exists a natural number
    $N$ such that in model of $T$ there is no
    finite sequence $(a_ib_i\colon i < N)$ satisfying:
    \begin{gather*}
      \label{eq:EStableEquivInc}
      \tag{$*$}
      \text{for all } i < j < k: \qquad
      |\varphi(a_j,b_i) - \varphi(a_j,b_k)| \geq \varepsilon.
    \end{gather*}
  \end{enumerate}
\end{lem}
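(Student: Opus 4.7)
The plan is to establish the equivalences (i)$\Leftrightarrow$(ii) and (i)$\Leftrightarrow$(iii) via extraction of indiscernible subsequences, with only one direction being trivial.

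The implication (i)$\Rightarrow$(ii) is immediate by contraposition: if $r<s$ with $s-r\geq\varepsilon$ and an infinite sequence satisfy $\varphi(a_i,b_j)\leq r$ and $\varphi(a_j,b_i)\geq s$ for all $i<j$, then $|\varphi(a_i,b_j)-\varphi(a_j,b_i)|\geq s-r\geq\varepsilon$, contradicting $\varepsilon$-stability.

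For the converse (ii)$\Rightarrow$(i), I argue by contraposition. Given an infinite sequence $(a_ib_i\colon i<\omega)$ witnessing non-$\varepsilon$-stability, I first extract an indiscernible subsequence using the standard compactness/Ramsey procedure for continuous logic (iterating Ramsey on pairs with colors taken from a $\delta$-net in the compact type space, then a diagonal argument across arities). By indiscernibility, $\varphi(a_i,b_j)$ takes a single value $\alpha\in[0,1]$ and $\varphi(a_j,b_i)$ takes a single value $\beta\in[0,1]$ for all $i<j$ in the subsequence, with $|\alpha-\beta|\geq\varepsilon$. If $\alpha\leq\beta-\varepsilon$, set $r=\alpha$ and $s=\beta$. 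If $\beta\leq\alpha-\varepsilon$, by compactness the same Ehrenfeucht--Mostowski type can be realised along the reverse linear order, yielding an $\omega$-indexed indiscernible sequence in which $\alpha$ and $\beta$ are swapped, reducing to the previous case.

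For (i)$\Leftrightarrow$(iii) I use the same indiscernibility principle in both directions. To show (i)$\Rightarrow$(iii), suppose $\neg$(iii): since finite $(*)$-sequences of arbitrary length exist, compactness produces an infinite sequence $(a_ib_i\colon i<\omega)$ with $|\varphi(a_j,b_i)-\varphi(a_j,b_k)|\geq\varepsilon$ for all $i<j<k$. Extracting an indiscernible subsequence, the value of $\varphi(a_p,b_q)$ depends only on whether $p<q$ or $p>q$, yielding constants $\alpha$ (for $p<q$) and $\beta$ (for $p>q$); instantiating $(*)$ at any single triple gives $|\alpha-\beta|\geq\varepsilon$. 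Since these are the same constants that control (i), we obtain $|\varphi(a_i,b_j)-\varphi(a_j,b_i)|=|\alpha-\beta|\geq\varepsilon$ for all $i<j$, witnessing $\neg$(i). Conversely, from an infinite sequence witnessing $\neg$(i), the same extraction yields constants $\alpha,\beta$ with $|\alpha-\beta|\geq\varepsilon$; for any $i<j<k$ we then have $\varphi(a_j,b_i)=\beta$ and $\varphi(a_j,b_k)=\alpha$, so $(*)$ holds on the full subsequence and all finite truncations refute (iii).

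The chief technical step is the extraction of an indiscernible subsequence in continuous logic, where the type space is compact Hausdorff rather than finite; this is handled by the standard iterative compactness argument sketched above. Once achieved, all three conditions collapse to the single assertion that there is an indiscernible sequence realising two values $\alpha,\beta\in[0,1]$ with $|\alpha-\beta|\geq\varepsilon$.
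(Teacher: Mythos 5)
Your proof is correct and takes essentially the same approach as the paper: extract (via Ramsey's Theorem plus compactness) a sequence on which $\varphi(a_i,b_j)$ and $\varphi(a_j,b_i)$ take constant values $r$, $s$ for $i<j$, then reverse the linear order when $r>s$. The only cosmetic differences are that you route the third equivalence through (i) rather than (ii), and you invoke a full indiscernibility extraction where the paper only needs arity-$2$ near-constancy followed by compactness.
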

\begin{proof}
  \begin{cycprf}
  \item[\eqnext] Left to right is immediate.
    For the converse assume $\varphi$ is not $\varepsilon$-stable, and let
    the sequence $(a_ib_i\colon i < \omega)$ witness this.
    For every $\delta > 0$ we can find using
    Ramsey's Theorem arbitrarily long sub-sequences
    $(a_i'b_i'\colon i < N)$ such that in addition:
    \begin{gather*}
      \text{If } i < j \text{ and } i' < j'
      \text{ then: }
      |\varphi(a_i,b_j) - \varphi(a_{i'},b_{j'})|,|\varphi(a_j,b_i) - \varphi(a_{j'},b_{i'})| \leq \delta.
    \end{gather*}
    (For this we use the classical finite Ramsey's Theorem.
    We could also use the infinite version to obtain a single infinite
    sequence with the same properties.)

    By compactness we can find an infinite sequence
    $(c_id_i\colon i <\omega)$ witnessing $\varepsilon$-instability such that
    in addition, for $i < j$,
    $\varphi(c_i,d_j) = r$ and $\varphi(c_j,d_i) = s$ do not depend on $i,j$.
    Thus $|r-s| \geq \varepsilon$.
    If $r < s$ we are done.
    If $r > s$ we can reverse the ordering on all the finite
    subsequences obtained above, thus exchanging $r$ and $s$, and
    conclude in the same manner.
  \item[\eqnext]
    Now right to left is immediate.
    For left to write, we argue as above, using Ramsey's Theorem and
    compactness, that there exists an infinite sequence
    $(a_ib_i\colon i < \omega)$ such that for $i < j < k$ we have
    $|\varphi(a_j,b_i) - \varphi(a_j,b_k)| \geq \varepsilon$ and
    $\varphi(a_i,b_j) = r$ and $\varphi(a_j,b_i) = s$ do not depend on $i,j$.
    Then again $|r-s| \geq \varepsilon$ and we conclude as above.
  \end{cycprf}
\end{proof}

It follows that stability is a symmetric property: define
$\tilde \varphi(y,x) \eqdef \varphi(x,y)$; then $\varphi$ is \hbox{($\varepsilon$-)stable} if and
only if $\tilde \varphi$ is.

\begin{ntn}
  \label{ntn:NPhiEpsilon}
  If $\varphi$ is $\varepsilon$-stable we define $N(\varphi,\varepsilon)$ to be the minimal $N$ such
  that no sequence $(a_ib_i\colon i < N + 1)$ exists satisfying
  \fref{eq:EStableEquivInc}.
\end{ntn}

Let us define the \emph{median value} connective
$\med_n\colon [0,1]^{2n-1} \to [0,1]$:
$$\med_n(t_{<2n-1}) = \bigwedge_{w \in [2n-1]^n} \bigvee_{i \in w} t_i
= \bigvee_{w \in [2n-1]^n} \bigwedge_{i \in w} t_i.$$
If $\varphi(x,y)$ is $\varepsilon$-stable define:
\begin{align*}
  d^\varepsilon\varphi(y,x_{<2N(\varphi,\varepsilon)-1})
  & = \med_{N(\varphi,\varepsilon)}\big( \varphi(x_i,y) \colon i <2N(\varphi,\varepsilon)-1 \big).
\end{align*}

\begin{lem}
  \label{lem:AprxPhiDef}
  Let $M$ be a model and $p \in \tS_\varphi(M)$.
  Then there exist $c^\varepsilon_{<2N(\varphi,\varepsilon)-1} \in M$ such that, for
  every $b \in M$:
  \begin{gather*}
    \left|\varphi(x,b)^p - d^\varepsilon\varphi(b,c^\varepsilon_{<2N(\varphi,\varepsilon)-1})\right| \leq \varepsilon.
  \end{gather*}
\end{lem}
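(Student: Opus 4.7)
The plan is to argue by contradiction, using the failure of the approximation to manufacture a long sequence that violates \fref{eq:EStableEquivInc}, contradicting the definition of $N = N(\varphi,\varepsilon)$. Fix a monster model $\fM \succeq M$ and a realization $a \models p$ in $\fM$; write $r_b = \varphi(x,b)^p = \varphi(a,b)$ for $b\in M$. Suppose toward contradiction that for every tuple $\bar c = (c_0,\ldots,c_{2N-2}) \in M^{2N-1}$ there exists $b(\bar c) \in M$ with $|r_{b(\bar c)} - d^\varepsilon\varphi(b(\bar c),\bar c)| > \varepsilon$.

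The essential tool is an elementarity-based approximation lemma: for any finite $\bar b = (b_0,\ldots,b_{k-1}) \in M$ and any $\eta > 0$, there exists $c \in M$ with $|\varphi(c,b_i) - r_{b_i}| < \eta$ for all $i < k$. This is because, after replacing each $r_{b_i}$ with a sufficiently close dyadic truth-constant, the formula $\psi(x) = \max_i |\varphi(x,b_i) - r_{b_i}|$ has parameters in $M$ and satisfies $\psi(a)^\fM = 0$; hence $\inf_x \psi(x)^\fM = 0$, and by $M \preceq \fM$ the same infimum is attained arbitrarily closely in $M$.

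Fix $\delta > 0$ very small compared to $\varepsilon$ (say $\delta < \varepsilon/10$). I would construct, by induction on $j < \omega$, a sequence $(a_j,b_j)$ with $a_j, b_j \in M$, such that:
\begin{enumerate}
\item[(i)] $a_j$ is an elementarity-approximation of $a$ as above, so that $|\varphi(a_j,b_i) - r_{b_i}| < \delta$ for every $i < j$;
\item[(ii)] $b_j$ is chosen by applying the failure hypothesis to a tuple drawn from $\{a_0,\ldots,a_j\}$ (padding to length $2N-1$ by repetition), yielding $|r_{b_j} - d^\varepsilon\varphi(b_j, \bar a^{(j)})| > \varepsilon$.
\end{enumerate}
By construction the values $r_{b_j} \in [0,1]$ and the sign of the median-failure (whether $r_{b_j}$ exceeds or falls below the median by $\varepsilon$) each take values in a finite set; so by a standard Ramsey/compactness thinning one may pass to an infinite subsequence on which the failure direction is constant (say $r_{b_j} > d^\varepsilon\varphi(b_j,\bar a^{(j)}) + \varepsilon$ for all $j$) and all the $r_{b_j}$ lie within $\delta$ of a common value $r \in [0,1]$.

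Having arranged this, by the median inequality at least $N$ of the $2N-1$ entries of $\bar a^{(j)}$ satisfy $\varphi(a_?, b_j) \leq r_{b_j} - \varepsilon \leq r_{b_i} - \varepsilon + \delta$ for every $i$; by further pigeonholing over the $\binom{2N-1}{N}$ possible sets of deviating indices, one extracts $N+1$ indices $j_0 < j_1 < \cdots < j_N$ such that $\varphi(a_{j_s}, b_{j_t}) \leq r_{b_{j_t}} - \varepsilon$ whenever $s < t$, while (i) gives $\varphi(a_{j_s}, b_{j_t}) \approx r_{b_{j_t}}$ (within $\delta$) when $s > t$. Then for any $s_0 < s < s_1$ in $\{0,\ldots,N\}$, one has $|\varphi(a_{j_s}, b_{j_{s_0}}) - \varphi(a_{j_s}, b_{j_{s_1}})| \geq \varepsilon - 2\delta$, which after a harmless rescaling of $\varepsilon$ contradicts the definition of $N(\varphi,\varepsilon)$ via condition (iii) of \fref{lem:EStableEquiv}. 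The main obstacle is this final Ramsey-type extraction, which must simultaneously fix the direction of the median failure and the approximate constancy of the $r_{b_j}$; everything else reduces to elementarity of $M \preceq \fM$ and counting.
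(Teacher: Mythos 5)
The overall plan (reduce to the definition of $N(\varphi,\varepsilon)$ via a contradiction argument, using $M \preceq \fM$ to realize arbitrarily good approximations of $p$ inside $M$) is the right idea and shares its starting point with the paper's proof. However, the final pigeonhole/Ramsey step in your step~7 has a genuine gap: you pigeonhole over the $\binom{2N-1}{N}$ possible sets of \emph{positions} that deviate, but a position is not a fixed element --- the tuple $\bar a^{(j)}$ changes with $j$, so knowing that ``the same position set deviates at steps $j$ and $j'$'' says nothing about whether the element $a_{j_s}$ sits at a deviating position of $\bar a^{(j_t)}$ when $s<t$. Concretely, if $\bar a^{(j)}$ is the sliding window $(a_{j-2N+2},\dots,a_j)$, then the deviating elements at step $j_t$ all have absolute index within $2N-2$ of $j_t$, so any $j_0<\dots<j_N$ you could hope to extract would have to lie in a window of width $2N-1$; but the pigeonhole on position sets only guarantees an infinite set of steps with the \emph{same} deviating position set, not a \emph{block} of $N$ consecutive ones. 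Already for $N=2$ (so $3$-tuples, deviating position set $\{0,1\}$ say) one would need $j_1 = j_2 - 1$ and both $j_1,j_2$ in the pigeonholed infinite set, which is not given. If instead you pad with repetitions or keep using only $a_0,\dots,a_j$ in order, you hit a second problem: $N$ deviating \emph{slots} need not give $N$ \emph{distinct} deviating elements, which is what one needs to build the long sequence witnessing instability.

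The paper's proof sidesteps these difficulties by arguing directly rather than by contradiction. It chooses a realization $c\models p$ in $\fM$ and builds $c_0,c_1,\dots\in M$, along with an explicit bookkeeping device: at each stage $n$ it tracks the family $K(n)$ of subsets $w\subseteq n$ admitting a witness $a_w\in M$ for which every $i\in w$ deviates from $c$ by $>\varepsilon$, and it picks $c_n$ (by elementarity, just as in your approximation lemma) so that $c_n$ \emph{also} deviates from every $c_i$ on every previously recorded witness $a_w$ ($i\in w\in K(n)$). The heart of the argument is then a finitary claim: every $w\in K(n)$ has $|w|<N(\varphi,\varepsilon)$, proved by taking a putative $w=\{m_0<\dots<m_{N-1}\}$ of size $\geq N$ and exhibiting the sequence $(c_{m_i},a_{w_i}:i\leq N)$ (where $w_j=\{m_0,\dots,m_{j-1}\}$), which violates the defining property of $N(\varphi,\varepsilon)$. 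This makes precise the combinatorial extraction that your pigeonhole was trying to perform: the nested sets $w_0\subseteq w_1\subseteq\cdots$ and the explicit choice of $c_n$ to be ``bad on all old witnesses'' are precisely what replaces your ad hoc pigeonhole over position sets. To repair your proof you would need to build in this kind of nested bookkeeping; as written, the extraction step does not go through.

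Your elementarity approximation lemma is correct and corresponds exactly to the transfer step in the paper (``the last inequality actually holds in $M$''). Your observation that a median failure forces at least $N$ one-sided deviations is also correct and used implicitly in the paper's final paragraph. The gap is isolated to the Ramsey/pigeonhole extraction.
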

\begin{proof}
  We argue as in the proof of \cite[Lemma~2.2]{Pillay:GeometricStability}.
  Choose a realisation $c \models p$ in the monster model: $c \in \fM \succeq M$.
  We construct by induction on $n$, tuples $c_n \in M$ in the sort of
  $x$, an increasing sequence of sets $K(n) \subseteq \bP(\omega)$, and tuples
  $a_w \in M$ in the sort of $y$ for each $w \in K(n)$, as follows.

  At the $n$th step we assume we have already chosen $c_{<n}$.
  We define:
  \begin{gather*}
    K(n) = \left\{ w \subseteq n\colon \exists a \in M \text{ such that }
      |\varphi(c,a) - \varphi(c_i,a)| > \varepsilon \text{ for all } i \in w \right\}.
  \end{gather*}
  For every $w \in K(n)$ such that $a_w$ has not
  yet been chosen, choose $a_w \in M$ witnessing that
  $w \in K(n)$.
  Note that if $w \subseteq m < n$ and $a_w$ witnesses that
  $w \in K(m)$ then it also witnesses that
  $w \in K(n)$, so there is no problem keeping previously
  made choices.
  We now have:
  \begin{align*}
    & \forall w \in K(n), i \in w: && |\varphi(c_i,a_w) - \varphi(c,a_w)| > \varepsilon,
  \end{align*}
  whereby:
  \begin{gather*}
    {\sup}_x  \left(
      \mathop{\bigwedge_{w \in K(n)}}_{i \in w} |\varphi(c_i,a_w) - \varphi(x,a_w)|
    \right) > \varepsilon.
  \end{gather*}
  This holds in $\fM$; but since all of the parameters of the form
  $a_w,b_w,c_i$ are in $M$, the last inequality actually
  holds in $M$.
  Therefore there exists $c_n \in M$ such that:
  \begin{align*}
    & \forall w \in K(n), i \in w: && |\varphi(c_i,a_w) - \varphi(c_n,a_w)| > \varepsilon.
  \end{align*}
  This concludes the $n$th step of the construction.

  Note that if $w \in K(n)$ and $m < n$ then
  $w\cap m \in K(n)$ as well.

  \begin{clm}
    For all $n$ and $w\in K(n)$: $|w| < N(\varphi,\varepsilon)$.
  \end{clm}
  \begin{clmprf}
    If not there is $w = \{m_0 < \ldots < m_{N-1}\} \in K(n)$ where
    $N \geq N(\varphi,\varepsilon)$.
    Define $m_N = n$ (so $m_{N-1} < m_N$), and for $j < N$, let
    $w_j = \{m_i\colon i<j\}$.
    Then for all $i<j<k\leq N$ we have $m_i \in w_j \in K(m_k)$, whereby:
    \begin{gather*}
      \left| \varphi(c_{m_i},a_{w_j})-\varphi(c_{m_k},a_{w_j}) \right| > \varepsilon.
    \end{gather*}
    Thus the sequence $(c_{m_i},a_{w_i}\colon i< N+1)$ contradicts the choice
    of $N(\varphi,\varepsilon)$.
  \end{clmprf}

  It follows that for every $w \in [2N(\varphi,\varepsilon)-1]^{N(\varphi,\varepsilon)}$ and $a \in M$:
  \begin{gather*}
    \bigwedge_{i\in w} \varphi(c_i,a) -\varepsilon \leq \varphi(c,a) \leq \bigvee_{i\in w} \varphi(c_i,a)+\varepsilon
  \end{gather*}
  Whereby $|\varphi(c,a) - d^\varepsilon\varphi(a,c_{<2N(\varphi,\varepsilon)-1})| \leq \varepsilon$, as required.
\end{proof}

\begin{dfn}
  Let $p(x) \in \tS_\varphi(M)$.
  A \emph{definition} for $p$ is an $M$-definable predicate
  $\psi(y)$ satisfying $\varphi(x,b)^p = \psi^M(b)$ for all $b \in M$.
  If such a definable predicate exists then it is unique
  (any two such definable predicates coincide on $M$, and therefore on
  every elementary extension of $M$), and is
  denoted $d_p\varphi(y)$.
\end{dfn}

Assume now that $\varphi(x,y)$ is stable, and let:
\begin{gather*}
  X = (x^n_i\colon n < \omega, i <2N(\varphi,2^{-n})-1), \\
  d\varphi(y,X) = \flim_n  d^{2^{-n}} \varphi(y,x^n_{<2N(\varphi,2^{-n})-1}).
\end{gather*}

\begin{prp}
  \label{prp:PhiDef}
  Let $M$ be a model, and $p \in \tS_\varphi(M)$.
  Then there are parameters $C \subseteq M$ such that
  $d\varphi(y,C) = d_p\varphi(y)$ (so in particular, a definition $d_p\varphi$ exists).
  Moreover, $d_p\varphi$ is an $M$-definable $\tilde \varphi$-predicate.
\end{prp}
\begin{proof}
  For each $n < \omega$ choose
  $c^n_{<2N(\varphi,2^{-n})-1}$ as in \fref{lem:AprxPhiDef},
  and let $C = (c^n_i\colon n < \omega,i <2N(\varphi,2^{-n})-1)$.

  Let $\xi\colon M \to [0,1]$ be defined as $b \mapsto \varphi(x,b)^p$.
  Then $|d^{2^{-n}}\varphi(y,c_{n,<2N(\varphi,2^{-n})-1})^M - \xi| \leq 2^{-n}$,
  whereby:
  \begin{gather*}
    \xi = \flim_n d^{2^{-n}}\varphi(y,c^n_{<2N(\varphi,2^{-n})-1})^M = d\varphi(y,C)^M.
  \end{gather*}
  This precisely means that $d\varphi(y,C) = d_p\varphi$.

  That $d\varphi(x,C)$ is a $\tilde \varphi$-predicate follows from its
  construction.
\end{proof}

From this point onwards we assume that $\cL$ has a sort for the
canonical parameters of instances of $d\varphi(y,X)$ for every stable
formula $\varphi(x,y) \in \cL$.
If not, we add these sorts as in \fref{sec:Img}.
It should be pointed out that if $M$ is an $\cL$-structure and
$\|M\| \geq |\cL|$, the addition of the new sorts does not change $\|M\|$:
this can be seen directly from the construction, or using the Downward
L\"owenheim-Skolem Theorem (\fref{fct:DLS}) and the fact that
we do not change $|\cL|$.

For every stable formula and type $p \in \tS_\varphi(M)$ we define $\Cb_\varphi(p)$
as the canonical
parameter of $d_p\varphi(y)$.
With the convention above we have $\Cb_\varphi(p) \in M$.
Notice that if $p,q \in \tS_\varphi(M)$, $c = \Cb_\varphi(p)$ and $c' = \Cb_\varphi(q)$,
then $d(c,c')$ (in the sense of the sort of canonical parameters for
$d\varphi$) is equal to $d(p,q)$ in $\tS_\varphi(M)$.

As with structures, we will measure the size of a type space
$\tS_\varphi(M)$ by its metric density character $\|\tS_\varphi(M)\|$.

\begin{prp}
  \label{prp:LocStabCnt}
  The following are equivalent for a formula $\varphi(x,y)$:
  \begin{enumerate}
  \item $\varphi$ is stable.
  \item For every $M \models T$, every $p \in \tS_\varphi(M)$ is definable.
  \item For every $M \models T$, $\|\tS_\varphi(M)\| \leq \|M\|$.
  \item There exists $\lambda \geq |T|$ such that whenever $M \models T$ and
    $\|M\| \leq \lambda$ then $\|\tS_\varphi(M)\| \leq \lambda$ as well.
  \end{enumerate}
\end{prp}
\begin{proof}
  \begin{cycprf}
  \item[\impnext] By \fref{prp:PhiDef}.
  \item[\impnext] Let $D \subseteq M_{d\varphi}$ be the family of canonical parameters of
    instances of $d\varphi(y,X)$ which actually arise as definitions of
    $\varphi$-types over $M$.
    Then $\|D\| \leq \|M\|$, and $D$ is isometric to $\tS_\varphi(M)$.
  \item[\impnext] Immediate.
  \item[\impfirst]
    Let $\lambda \geq |T|$ be any cardinal and assume $\varphi$ is unstable.
    It is a classical fact that there exists a linear order $(I,<)$
    of cardinality $\lambda$
    admitting $>\lambda$ initial segments:
    for example, let $\mu$ be the least cardinal
    such that $2^\mu > \lambda$ and let $I = \{0,1\}^{<\mu}$ equipped with the
    lexicographic ordering.

    Assuming $\varphi$ is unstable then we can find
    (using \fref{lem:EStableEquiv} and compactness)
    $0 \leq r < s \leq 1$ and  a sequence $(a_ib_i\colon i \in I)$ such that
    $i < j$ imply $\varphi(a_i,b_j) \leq r$ and $\varphi(a_j,b_i) \geq s$.
    By the Downward L\"owenheim-Skolem Theorem there exists a model
    $\{b_i\colon i \in I\} \subseteq M \preceq \fM$ such that $\|M\| \leq \lambda$.
    On the other hand, by compactness,
    for every initial segment $C \subseteq I$ there exists
    $a_C$ such that
    $\varphi(a_C,b_i) \geq s$ if $i \in C$
    and
    $\varphi(a_C,b_i) \leq r$ if $i \notin C$.
    Let $p_C = \tp_\varphi(a_C/M)$.

    If $C,C'$ are two distinct initial segments of $I$ then
    $d(p_C,p_{C'}) \geq s-r$, showing that
    $\|\tS_\varphi(M)\| > \lambda$ and concluding the proof.
  \end{cycprf}
\end{proof}

\begin{dfn}
  Let $(X,d)$ be a metric space.
  The \emph{diameter} of a subset $C \subseteq X$ is defined as
  \begin{gather*}
    \diam(C) = \sup\{d(x,y)\colon x,y \in C\}
  \end{gather*}
  We say that a subset $C \subseteq X$ is \emph{$\varepsilon$-finite} if it can be
  written as $C = \bigcup_{i<k} C_i$, where $\diam(C_i) \leq \varepsilon$ for all
  $i < k$.
  In this case, its \emph{$\varepsilon$-degree}, denoted $\deg_\varepsilon(C)$, is the
  minimal such $k$.
\end{dfn}

Note that if $C$ and $C'$ are $\varepsilon$-finite then so is $C\cup C'$ and
$\deg_\varepsilon(C\cup C') \leq \deg_\varepsilon(C) + \deg_\varepsilon(C')$.

\begin{dfn}
  Let $X$ be a compact topometric space.

  For a fixed $\varepsilon > 0$, we define a decreasing sequence of closed
  subsets $X_{\varepsilon,\alpha}$ by induction:
  \begin{align*}
    X_{\varepsilon,0} & = X\\
    X_{\varepsilon,\alpha} & = \bigcap_{\beta<\alpha} X_{\varepsilon,\beta} \qquad\qquad \text{($\alpha$ a limit ordinal)}\\
    X_{\varepsilon,\alpha+1} & = \bigcap \{F \subseteq X_{\varepsilon,\alpha}\colon \text{$F$ is closed and }
    \diam(X_{\varepsilon,\alpha}\setminus F) \leq \varepsilon\}\\
    X_{\varepsilon,\infty} & = \bigcap_\alpha X_{\varepsilon,\alpha}
  \end{align*}
  Finally, for any non-empty subset $C \subseteq X$ we define its
  $\varepsilon$-Cantor-Bendixson rank in $X$ as:
  \begin{gather*}
    \CB_{X,\varepsilon}(C) = \sup \{\alpha\colon C\cap X_{\varepsilon,\alpha} \neq \emptyset\} \in Ord \cup \{\infty\}
  \end{gather*}
  If $\CB_{X,\varepsilon}(C) < \infty$ we also define $\CBm_{X,\varepsilon}(C) = C \cap
  X_{\varepsilon,\CB_{X,\varepsilon}(C)}$, i.e., the set of points of maximal rank.
\end{dfn}

It is worthwhile to point out that
either $X_{\varepsilon,\alpha} \neq \emptyset$ for every $\alpha$ (and eventually stabilises
to $X_{\varepsilon,\infty}$) or there is a maximal $\alpha$ such that
$X_{\varepsilon,\alpha} \neq \emptyset$.
The same holds for the sequence $\{C\cap X_{\varepsilon,\alpha}\colon \alpha\in Ord\}$ if $C \subseteq X$ is
closed.

Assume that $C \subseteq X$ is closed and $\alpha = \CB_{X,\varepsilon}(C) < \infty$.
Then by the previous paragraph
$C$ contains points of maximal rank, i.e., $\CBm_{X,\varepsilon}(C) \neq \emptyset$.
Moreover, $\CBm_{X,\varepsilon}(C)$ is compact and admits in $X_{\varepsilon,\alpha}$ an open
covering by sets of diameter $\leq\varepsilon$.
By compactness, it can be covered by finitely many such, and is
therefore $\varepsilon$-finite.
This need not necessarily hold in case $C$ is not closed.

We will use this definition for $X = \tS_\varphi(M)$, where $M$ is at least
$\omega$-saturated.
In this case we may write $\CB_{\varphi,M,\varepsilon}$ instead of
$\CB_{\tS_\varphi(M),\varepsilon}$, etc.

\begin{rmk}
  In the definition of the $\varepsilon$-Cantor-Bendixson rank we defined
  $X_{\varepsilon,\alpha+1}$ by removing from $X_{\varepsilon,\alpha}$ all its ``small open subsets'',
  i.e., its open subsets of diameter $\leq\varepsilon$.
  There exist other possible definitions for the $\varepsilon$-Cantor-Bendixson
  derivative, using different notions of smallness.
  Such notions are studied in detail in
  \cite{BenYaacov:TopometricSpacesAndPerturbations}
  where
  it is shown that in the end they all boil down to the same thing.
\end{rmk}

\begin{prp}
  $\varphi$ is stable if and only if for one (any) $\omega$-saturated model
  $M \models T$: $\CB_{\varphi,M,\varepsilon}(\tS_\varphi(M)) < \infty$ for all $\varepsilon$.
\end{prp}
\begin{proof}
  If not, let $Y = \{p \in \tS_\varphi(M)\colon \CB_{\varphi,M,\varepsilon} = \infty\}$.
  Then $Y$ is compact, and if $U \subseteq Y$ is relatively open and
  non-empty then $\diam(U) > \varepsilon$.
  We can therefore find non-empty open sets $U_0,U_1$ such that $\bar
  U_0,\bar U_1 \subseteq U$ and $d(U_0,U_1) > \varepsilon$.
  Proceed by induction.
  This would contradict stability of $\varphi$ in a countable fragment of
  the theory.

  The converse is not really important, and is pretty standard.
\end{proof}

From now on we assume that $\varphi$ is stable.

\begin{dfn}
  Let $M \models T$ and $A \subseteq M$, and assume that
  $M$ is $(|A|+\omega)^+$-saturated and strongly homogeneous.
  A subset $F \subseteq \tS_\varphi(M)$ is \emph{$A$-good} if it is:
  \begin{enumerate}
  \item Metrically compact.
  \item Invariant under automorphisms of $M$ fixing $A$.
  \end{enumerate}
\end{dfn}

Recall the notions of algebraicity and algebraic closure from
\fref{dfn:Alg}.
\begin{lem}
  \label{lem:GoodDef}
  Assume that $F \subseteq \tS_\varphi(M)$ is $A$-good.
  Then every $p \in F$ is definable over $\acl(A)$.
\end{lem}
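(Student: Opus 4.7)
The plan is to show that for every $p \in F$, the canonical parameter $c = \Cb_\varphi(p)$ lies in $\acl(A)$. Once this is done, the definition $d_p\varphi(y) = P_{d\varphi}(y,c)$ is automatically an $\acl(A)$-definable predicate, which means $p$ is definable over $\acl(A)$.

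The first step is to translate the hypotheses on $F$ into hypotheses on the set $C = \Cb_\varphi(F)$ sitting inside the imaginary sort $S_{d\varphi}$ of $M$. Recall the observation made right after \fref{prp:PhiDef}: the map $q \mapsto \Cb_\varphi(q) : \tS_\varphi(M) \to S_{d\varphi}^M$ is an isometry. Hence metric compactness of $F$ transfers immediately to metric compactness of $C$. Since canonical parameters are permuted by automorphisms (i.e., $\sigma(\Cb_\varphi(q)) = \Cb_\varphi(\sigma \cdot q)$ for any $\sigma \in \Aut(M/A)$), the $\Aut(M/A)$-invariance of $F$ also transfers to $C$.

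Next, fix $p \in F$ and put $c = \Cb_\varphi(p) \in C$. The orbit $\Aut(M/A) \cdot c$ is contained in $C$ and hence is metrically totally bounded; in particular, for every $\varepsilon > 0$ it contains no infinite $\varepsilon$-separated subsequence. Because $M$ is $(|A|+\omega)^+$-saturated and strongly homogeneous, this orbit coincides with the full set of realizations in $M$ of the type $\tp(c/A)$. Condition (iv) of \fref{lem:AlgTyp} then applies and gives that $\tp(c/A)$ is algebraic, i.e., $c \in \acl(A)$, as required.

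The argument is essentially direct once the isometry between $\tS_\varphi(M)$ and the space of $\varphi$-definitions is used to push the topometric compactness hypothesis on $F$ into a metric total boundedness statement about an orbit. The only mildly delicate point is the identification of the $\Aut(M/A)$-orbit of $c$ with the set of all realizations of $\tp(c/A)$ in $M$, but this is exactly what the saturation and homogeneity assumptions built into the definition of ``$A$-good'' are there to provide.
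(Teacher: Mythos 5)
Your proof is correct and follows essentially the same approach as the paper's: in both cases the key is to show that $c = \Cb_\varphi(p)$ lies in $\acl(A)$ by combining the isometry between $\tS_\varphi(M)$ and the space of canonical parameters with the metric compactness of $F$, and then using saturation and strong homogeneity to see that an infinite $\varepsilon$-separated set of realizations of $\tp(c/A)$ would yield an $\varepsilon$-separated infinite subset of $F$. The paper phrases this as a direct contradiction while you phrase it via total boundedness of the $\Aut(M/A)$-orbit and condition (iv) of Lemma~\ref{lem:AlgTyp}, but the content is the same.
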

\begin{proof}
  We know that $p$ is definable, so let $d\varphi(y,C)$ be its
  definition, and $c = \Cb_\varphi(p)$ the canonical parameter of the
  definition.
  We may write $d\varphi(x,C)$ as $d_p\varphi(y,c)$.

  Assume that $c \notin \acl(A)$.
  Then there exists
  an infinite sequence $(c_i\colon i < \omega)$ in
  $\tp(c/A)$ such that $d(c_i,c_j) \geq \varepsilon > 0$ for all $i < j$, and we
  can realise this sequence in $M$ by the saturation assumption.
  By the homogeneity assumption, each $d_\varphi p(y,c_i)$ defines a type
  $p_i$ which is an $A$-conjugate of $p$.
  Therefore $p_i \in F$ for all $i < \omega$, and $d(p_i,p_j) \geq \varepsilon$
  for all $i < j < \omega$, contradicting metric compactness.
\end{proof}

\begin{lem}
  \label{lem:goodexist}
  Assume that $A \subseteq M \models T$, $M$ is $(|A|+\omega)^+$-saturated and strongly
  homogeneous, and $F \subseteq \tS_\varphi(M)$ is closed, non-empty, and
  invariant under $\Aut(M/A)$.
  Then $F$ contains an $A$-good subset.
\end{lem}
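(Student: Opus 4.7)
The plan is to distill $F$ to an $A$-good subset by iterating the $\varepsilon$-Cantor-Bendixson construction along a sequence $\varepsilon_n \to 0$. The key point is that each element $f \in \Aut(M/A)$ acts on $\tS_\varphi(M)$ as a homeomorphism that is also an isometry: topologically because $f$ preserves $\cL(A)$-definable predicates, isometrically because the metric on $\tS_\varphi(M)$ is defined uniformly from instances $\varphi(x,b)$ and $f$ just permutes parameters $b \in M$. Therefore, for any $A$-invariant closed $C \subseteq \tS_\varphi(M)$ and any $\varepsilon > 0$, the derivatives $C_{\varepsilon,\alpha}$ are $A$-invariant by induction on $\alpha$, and so is $\CBm_{\varphi,M,\varepsilon}(C)$ whenever defined.

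Since $\varphi$ is stable, the previous proposition gives $\CB_{\varphi,M,\varepsilon}(\tS_\varphi(M)) < \infty$ for every $\varepsilon > 0$. Thus for every closed non-empty $C \subseteq \tS_\varphi(M)$ and every $\varepsilon > 0$, the set $\CBm_{\varphi,M,\varepsilon}(C)$ is a non-empty closed subset of $C$ which is furthermore $\varepsilon$-finite by the compactness argument in the remarks preceding the statement.

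I would then construct a decreasing chain by setting $F_0 = F$ and $F_{n+1} = \CBm_{\varphi,M,2^{-n}}(F_n)$. By the two observations above, each $F_n$ is closed, non-empty, $A$-invariant, and $2^{-n}$-finite for $n \geq 1$. Let
\begin{gather*}
F' = \bigcap_{n<\omega} F_n.
\end{gather*}
By compactness of $\tS_\varphi(M)$ in its logic topology, $F'$ is non-empty and closed; it is clearly $A$-invariant and contained in $F$. For every $n$, $F' \subseteq F_n$ is $2^{-n}$-finite, so it can be covered by finitely many sets of diameter $\leq 2^{-n}$, hence by finitely many $2^{-n}$-balls. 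This shows $F'$ is totally bounded in the metric. Combined with metric completeness (which holds in any compact topometric space, as established just after \fref{dfn:TopoMetric}), we conclude that $F'$ is metrically compact, and therefore $A$-good.

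The main obstacle is the verification that $\CBm_{\varphi,M,\varepsilon}(C)$ is $A$-invariant when $C$ is. This reduces to verifying that the ``small open subset'' clause in the definition of $X_{\varepsilon,\alpha+1}$ is preserved by $\Aut(M/A)$, which is immediate from the fact that this group acts by topo-metric automorphisms of $\tS_\varphi(M)$; everything else is bookkeeping.
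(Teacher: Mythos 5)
Your proof is correct and follows essentially the same route as the paper: the identical iterated Cantor--Bendixson construction $F_{n+1} = \CBm_{\varphi,M,2^{-n}}(F_n)$, intersecting to obtain a non-empty, closed, $A$-invariant, totally bounded (hence metrically compact) set. The only difference is that you spell out the (easy but worth noting) verification that $\Aut(M/A)$ acts by topometric automorphisms and hence preserves the CB derivatives, which the paper leaves implicit.
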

\begin{proof}
  Define by induction on $n$: $F_0 = F$, and
  $F_{n+1} = \CBm_{\varphi,M,2^{-n}}(F_n)$.
  Then $(F_n\colon n < \omega)$ is a decreasing sequence of non-empty closed
  subsets of $\tS_\varphi(M)$, and so $F_\omega = \bigcap_n F_n \neq \emptyset$.
  The limit set $F_\omega$ is $\varepsilon$-finite
  for every $\varepsilon > 0$, i.e., it is totally bounded.
  Since the metric refines the topology, $F_\omega$ is also metrically
  closed in $\tS_\varphi(M)$ and thus complete.
  We see that $f_\omega$ is a totally bounded complete metric space and
  therefore metrically compact.
  Also, each of the $F_n$ is invariant under $\Aut(M/A)$, and so is
  $F_\omega$.
\end{proof}

\begin{prp}
  \label{prp:Ext}
  Let $A \subseteq M \models T$, and let $p \in \tS_\varphi(A)$.
  Then there exists $q \in \tS_\varphi(M)$ extending $p$ which is definable
  over $\acl(A)$.
\end{prp}
\begin{proof}
  We may replace $M$ with a larger model, so we might as well assume
  that $M$ is $(|A|+\omega)^+$-saturated and strongly homogeneous.
  Let $P = \{q \in \tS_\varphi(M)\colon p \subseteq q\}$.
  By \fref{lem:goodexist}, there is an $A$-good subset $Q \subseteq P$,
  which is non-empty by definition.
  By \fref{lem:GoodDef}, any $q \in Q$ is an $\acl(A)$-definable
  extension of $p$.
\end{proof}

\begin{prp}
  \label{prp:LocSym}
  Let $M \models T$, $p(x) \in \tS_\varphi(M)$ and $q(y) \in \tS_{\tilde \varphi}(M)$.
  Let $d_p\varphi(y)$ and $d_q{\tilde \varphi}(x)$ be their respective
  definitions, and
  recall that these are a $\tilde \varphi$- and a $\varphi$-predicate,
  respectively.
  Then $d_p\varphi(y)^q = d_q{\tilde \varphi}(x)^p$.
\end{prp}
\begin{proof}
  Let $M_0 = M$.
  Given $M_n \succeq M$, obtain $p_n \in \tS_\varphi(M_n)$ and
  $q_n \in \tS_{\tilde \varphi}(M_n)$ by applying the definition of $p$ and
  $q$, respectively, to $M_n$ (these are indeed complete satisfiable
  $\varphi$- and $\tilde \varphi$-types).
  Realise them by $a_n$ and $b_n$, respectively, in some extension
  $M_{n+1} \succeq M_n$.
  Repeat this for all $n < \omega$.

  We now have for all $i < j$: $\varphi(a_j,b_i) = d_p\varphi(y)^q$ and
  $\varphi(a_i,b_j) = d_q{\tilde \varphi}(x)^p$, and if these differ we get a
  contradiction to the stability of $\varphi$.
\end{proof}

\begin{prp}
  \label{prp:LocStat}
  Assume that $A \subseteq M$ is algebraically closed, $p,p' \in \tS_\varphi(M)$ are both
  definable over $A$, and $p\rest_A = p'\rest_A$.
  Then $p = p'$.
\end{prp}
\begin{proof}
  Let $b \in M$, $q = \tp_{\tilde \varphi}(b/A)$.
  By \fref{prp:Ext} there is $\hat q \in \tS_{\tilde \varphi}(M)$
  extending $q$ which is definable over $\acl(A) = A$, and let
  $d_{\hat q}{\tilde \varphi}(x)$ be this definition.
  Recalling that $d_p\varphi$ and $d_{p'}\varphi$ are $\tilde \varphi$-predicates,
  $d_{\hat q}{\tilde \varphi}$ is a $\varphi$-predicate, and all of them are over
  $A$, we have:
  \begin{align*}
    \varphi(x,b)^p
    & = d_p\varphi(b) = d_p\varphi(y)^q = d_p\varphi(y)^{\hat q} \\
    & = d_{\hat q}\tilde \varphi(x)^p = d_{\hat q}\tilde \varphi(x)^{p'} \\
    & = d_{p'}\varphi(y)^{\hat q} = d_{p'}\varphi(y)^q = d_{p'}\varphi(b) \\
    & = \varphi(x,b)^{p'}.
  \end{align*}
  Therefore $p = p'$.
\end{proof}

Given $A \subseteq M \models T$ and $p \in \tS_\varphi(\acl(A))$, we denote the unique
$\acl(A)$-definable extension of $p$ to $M$ by $p\rest^M$.
The definition of $p\rest^M$ is an $\acl(A)$-definable
$\tilde \varphi$-predicate which does not depend on $M$, and we may
therefore refer to it unambiguously as $d_p\varphi(y)$ (so far we only
used the notation $d_p\varphi(y)$ when $p$ was a $\varphi$-type over a model).

If $p \in \tS_\varphi(A)$, we define (with some abuse of notation)
$p\rest^M
= \{q\rest^M\colon p \subseteq q \in \tS_\varphi(\acl(A))\}$.

\begin{prp}
  If $A \subseteq M \models T$ and $p \in \tS_\varphi(A)$ as above, then
  $p\rest^M$ is closed in $\tS_\varphi(M)$.

  Assume moreover that $M$ is $(|A|+\omega)^+$-saturated and strongly
  homogeneous, and let $P = \{q \in \tS_\varphi(M)\colon p \subseteq q\}$.
  Then $p\rest^M$ is the unique $A$-good set contained in $P$.

  Also, we have $p\rest^M = \bigcap_{\varepsilon>0} \CBm_{\varphi,M,\varepsilon}(P)$, and
  in fact $p\rest^M = \bigcap_{\varepsilon\in E} \CBm_{\varphi,M,\varepsilon}(P)$ for any
  $E \subseteq (0,\infty)$ such that $\inf E = 0$.
  In other words, $q \in p\rest^M$ if and only if $q \in P$,
  and it has maximal $\CB_{\varphi,M,\varepsilon}$-rank as such for every $\varepsilon > 0$.
\end{prp}
\begin{proof}
  If $M \preceq M'$, then
  $p\rest^M = \{q\rest_M\colon q \in p\rest^{M'}\}$,
  so we may assume that $M$ is
  $(|A|+\omega)^+$-saturated and strongly homogeneous.

  Let $Q \subseteq P$ be any $A$-good subset.
  If $q \in Q$, then
  $q = (q\rest_{\acl(A)})\rest^M \in p\rest^M$.
  By \fref{lem:LocAclTrans} it follows that
  $Q = p\rest^M$.
  Therefore $p\rest^M$ is closed.

  It follows by \fref{lem:goodexist} that
  $p\rest^M \subseteq \CBm_{\varphi,M,\varepsilon}(P)$
  for every $\varepsilon > 0$, whereby $\bigcap_{\varepsilon > 0} \CBm_{\varphi,M,\varepsilon}(P) \neq \emptyset$.
  The other requirements for $\bigcap_{\varepsilon > 0} \CBm_{\varphi,M,\varepsilon}(P)$ to be
  $A$-good follow directly from its definition, and we conclude that
  $p\rest^M = \bigcap_{\varepsilon > 0} \CBm_{\varphi,M,\varepsilon}(P)$.
\end{proof}

\begin{prp}
  Assume that $M \preceq N \models T$ are both $\omega$-saturated.
  Let $p \in \tS_\varphi(M)$, and let $q \in \tS_\varphi(N)$ extend it.
  Then $\CB_{\varphi,M,\varepsilon}(p) \geq \CB_{\varphi,N,\varepsilon}(q)$, and equality holds for all
  $\varepsilon > 0$ if and only if $q = p\rest^N$.
\end{prp}
\begin{proof}
  Assume first that $N$ is $\omega_1$-saturated and strongly homogeneous.
  Let:
  \begin{align*}
    X_{\varepsilon,\alpha} = \{p \in \tS_\varphi(M)\colon \CB_{\varphi,M,\varepsilon}(p) \geq \alpha\} & &
    Y_{\varepsilon,\alpha} = \{q \in \tS_\varphi(N)\colon \CB_{\varphi,N,\varepsilon}(q) \geq \alpha\}
  \end{align*}

  We first prove by induction on $\alpha$ that if $\CB_{\varphi,M,\varepsilon}(p) \leq \alpha$ and
  $p \subseteq q \in \tS_\varphi(N)$ then $\CB_{\varphi,N,\varepsilon}(q) \leq \alpha$.
  Given a $\varphi$-predicate $\psi(x,a)$ with parameters $a \in M$ and
  $r \in [0,1]$, let us write:
  \begin{gather*}
    [\psi(x,a) < r]^M
    = [\psi(x,a) < r]^{\tS_\varphi(M)}
    = \{p' \in \tS_\varphi(M)\colon \psi(x,a)^{p'} < r\}.
  \end{gather*}
  Sets of this form form a basis of open sets for $\tS_\varphi(M)$.
  Since $\CB_{\varphi,M,\varepsilon}(p) \leq \alpha$, there are such $\psi(x,a)$ and $r$ such
  that $p \in [\psi(x,a)<r]^M$ and $\diam([\psi(x,a) < r]^M\cap X_{\varepsilon,\alpha}) \leq \varepsilon$.

  Clearly, $q \in [\psi(x,a) < r]^N$, so we'll be done if we prove that
  $\diam([\psi(x,a) < r]^N\cap Y_{\varepsilon,\alpha}) \leq \varepsilon$ as well.
  Indeed, assume that there are $q',q'' \in [\psi(x,a)<r]^N$ such that
  $d(q',q'') > \varepsilon$.
  Let $d\varphi(y,e')$ and $d\varphi(y,e'')$ be their respective definitions,
  where $e'$ and $e''$ are the canonical parameters.
  Then we can find $f',f'',b \in M$ such that $f'f''b \equiv e'e''a$ and
  $d(a,b)$ is as small as we want (in fact using $\omega$-saturation we can
  actually have $a = b$, but the argument goes through even if we
  can only have $b$ arbitrarily close to $a$; therefore the result
  is true even if $M$ is merely \emph{approximately $\omega$-saturated}, as
  defined in \cite{BenYaacov:Morley} or \cite{BenYaacov-Usvyatsov:dFiniteness}).

  Let $p',p'' \in \tS_\varphi(M)$ be defined by $d\varphi(y,f')$ and $d\varphi(y,f'')$,
  respectively.
  Then $p',p'' \in [\psi(x,b)<r]^M$, and having made sure that $b$ is
  close enough to $a$, we can get $p',p'' \in [\psi(x,a)<r]^M$.
  Also, we still have $d(p',p'') > \varepsilon$.
  Therefore at least one of $p' \notin X_{\varepsilon,\alpha}$ or $p'' \notin X_{\varepsilon,\alpha}$ must
  hold, so let's say it's the former.
  In this case, by the induction hypothesis,
  $p'\rest^N \notin Y_{\varepsilon,\alpha}$.
  Since $N$ is $\omega_1$-strongly homogeneous, $p'\rest^N$ and $q'$
  are conjugates by $\Aut(N)$, so $q' \notin Y_{\varepsilon,\alpha}$, as required.

  Now let $q = p\rest^N$, and let $d\varphi(y,e)$ be the common
  definition.
  Assume that $\CB_{\varphi,N,\varepsilon}(q) \leq \alpha$, so there are a $\varphi$-predicate
  $\psi(x,a)$ with $a \in N$, and $r$, such that $q \in [\psi(x,a)<r]^N$ and
  $\diam([\psi(x,a)<r]^N\cap Y_{\alpha,\varepsilon}) \leq \varepsilon$.
  We may find $b,f \in M$ such that $bf \equiv ae$, and such that $f$ is as
  close as we want to $e$ (again: if we take the plain definition of
  $\omega$-saturation we can even have $e=f$, but we want an argument that
  goes through if $M$ is only approximately $\omega$-saturated).
  If $p' \in \tS_\varphi(M)$ is defined by $d\varphi(y,f)$,
  then $p' \in [\psi(x,b)<r]^M$, and assuming $f$ and $e$
  are close enough we also
  have $p \in [\psi(x,b)<r]^M$.
  By the homogeneity assumption for $N$ we get
  $\diam([\psi(x,b)<r]^N\cap Y_{\alpha,\varepsilon}) \leq \varepsilon$.

  Assume now that $p'',p''' \in [\psi(x,b)<r]^M$, and $d(p'',p''') > \varepsilon$.
  Let $q'' = p''\rest^N$ and
  $q''' = p'''\rest^N$.
  Then $d(q'',q''') > \varepsilon$, so either $q'' \notin Y_{\varepsilon,\alpha}$ or
  $q''' \notin Y_{\varepsilon,\alpha}$ (or both), so let's say it's the former.
  By the induction hypothesis we get $p'' \notin X_{\varepsilon,\alpha}$, showing that
  $\diam([\psi(x,b)<r]^M\cap X_{\alpha,\varepsilon}) \leq \varepsilon$.
  Therefore $\CB_{\varphi,M,\varepsilon}(p) \leq \alpha$.

  Since $p\rest^N$ is the unique extension of $p$ to $N$
  having maximal $\CB_{\varphi,N,\varepsilon}$ rank for all $\varepsilon > 0$, this proves what
  we wanted.

  If $N$ is not $\omega_1$-strongly homogeneous, take a common elementary
  extension for $M$ and $N$ which is.
\end{proof}

\begin{cor}
  Again, let $M \preceq N$ be $\omega$-saturated.
  Let $X \subseteq \tS_\varphi(M)$ be any set of $\varphi$-types (without any
  further assumptions), and let $Y \subseteq \tS_\varphi(N)$ be its pre-image under
  the restriction mapping.
  Then $\CB_{\varphi,M,\varepsilon}(X) = \CB_{\varphi,N,\varepsilon}(Y)$.
\end{cor}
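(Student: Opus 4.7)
The plan is to reduce the statement to the single-type version established in the previous proposition. The key observation is that for any compact topometric space $X$ and any subset $C \subseteq X$, the definition immediately gives
\[
\CB_{X,\varepsilon}(C) = \sup\{\CB_{X,\varepsilon}(p) \colon p \in C\},
\]
since $(X_{\varepsilon,\alpha})$ is decreasing in $\alpha$, so $C \cap X_{\varepsilon,\alpha} \neq \emptyset$ is the same as the existence of some $p \in C$ with $\CB_{X,\varepsilon}(p) \geq \alpha$. Granted this identity, both inequalities of the corollary will follow from the previous proposition applied pointwise.

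For the inequality $\CB_{\varphi,N,\varepsilon}(Y) \leq \CB_{\varphi,M,\varepsilon}(X)$, I would take any $q \in Y$ and set $p := q\rest_M$; by the definition of $Y$ as the pre-image of $X$ under restriction, we have $p \in X$. The previous proposition gives $\CB_{\varphi,N,\varepsilon}(q) \leq \CB_{\varphi,M,\varepsilon}(p) \leq \CB_{\varphi,M,\varepsilon}(X)$, and taking the supremum over $q \in Y$ yields the desired bound.

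For the reverse inequality, I would take any $p \in X$ and form its canonical extension $p\rest^N$; this is well-defined because $M$, being a model, is algebraically closed (so $\acl(M) = M$), whence $p$ has a unique $M$-definable extension to $N$. Since $(p\rest^N)\rest_M = p \in X$, the extension $p\rest^N$ lies in $Y$. Applying the ``moreover'' clause of the previous proposition with $q = p\rest^N$ yields the equality $\CB_{\varphi,M,\varepsilon}(p) = \CB_{\varphi,N,\varepsilon}(p\rest^N) \leq \CB_{\varphi,N,\varepsilon}(Y)$. Taking the supremum over $p \in X$ completes the proof.

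There is essentially no substantial obstacle: the corollary is pure bookkeeping on top of the previous proposition, and the only thing to be mildly careful about is checking that the canonical extension $p\rest^N$ actually lies in $Y$, which reduces to the tautology that $p\rest^N$ extends $p$. Note in particular that no additional saturation or homogeneity assumption on $N$ beyond $\omega$-saturation is needed here, because the only use of $N$ is through the previous proposition, which is stated precisely under the hypothesis that both $M$ and $N$ are $\omega$-saturated.
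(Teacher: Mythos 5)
The paper states this corollary without proof, so there is no official argument to compare against; your proof is correct and is the natural way to derive it. The key reduction to the pointwise statement via the identity $\CB_{X,\varepsilon}(C) = \sup\{\CB_{X,\varepsilon}(p)\colon p\in C\}$ is sound (noting that for each point the set of $\alpha$ with $p\in X_{\varepsilon,\alpha}$ is a closed initial segment of ordinals, so $\CB_{X,\varepsilon}(p)\geq\alpha$ iff $p\in X_{\varepsilon,\alpha}$), and both inequalities then follow exactly as you describe from the two clauses of the preceding proposition, using $q\rest_M\in X$ for one direction and $p\rest^N\in Y$ for the other.
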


In particular, this gives us an absolute notion of $\CB_{\varphi,\varepsilon}(p)$
where $p$ is a partial $\varphi$-type, without specifying over which
($\omega$-saturated) model we work: just calculate it in any $\omega$-saturated
model containing the parameters for $p$.

For example, assume that $A \subseteq B \subseteq M \models T$, $p \in \tS_\varphi(A)$ and $p \subseteq q \in
\tS_\varphi(B)$.
Then $q$ is definable over $\acl(A)$ if and only if $\CB_{\varphi,\varepsilon}(p) =
\CB_{\varphi,\varepsilon}(q)$ for all $\varepsilon > 0$.

\section{Global stability and independence}
\label{sec:GlobStab}

In the previous section we only considered local stability, i.e.,
stability of a single formula $\varphi(x,y)$.
In this section we will use those results to deduce a global stability
theory.

\subsection{Gluing local types}

Let $A$ be an algebraically closed set, and say $A \subseteq M$.
Let $\varphi(x,y)$ and $\psi(x,z)$ be two stable formulae,
$p_\varphi \in \tS_\varphi(A)$, $p_\psi \in \tS_\psi(A)$.
Then we know that each of $p_\varphi$ and $p_\psi$ have unique extensions
$q_\varphi \in \tS_\varphi(M)$ and $q_\psi \in \tS_\psi(M)$, respectively, which are
$A$-definable.

Assume now that $p_\varphi$ and $p_\psi$ are
compatible, i.e., that $p_\varphi(x)\cup p_\psi(x)$ is satisfiable.
We would like to show that $q_\varphi$ and $q_\psi$ are compatible as well.
For this purpose there is no harm in assuming that $M$ is strongly
$(|A|+\omega)^+$-homogeneous, or even that $M = \fM$ is our monster
model.

Let $t,w$ be any variables in a single sort, say the home sort,
and $e \neq e' \in M$ in that sort.
We may assume that $d(e,e') = 1$: even if not, everything we do below
would work when we replace $d(t,w)$ with
$d(t,w) \dotplus \ldots \dotplus d(t,w)$.
Define:
\begin{gather*}
  \chi_{\varphi,\psi}(x,yztw) = \varphi(x,y)\land d(t,w) \dotplus \psi(x,z)\land \lnot d(t,w).
\end{gather*}

Since we assume that $\varphi$ and $\psi$ are stable
so is $\chi_{\varphi,\psi}(x,yztw)$ by the following easy result:
\begin{lem}
  Assume $\varphi_i(x,y)$ are stable formulae for $i<n$ and $f$ is an
  $n$-ary continuous connective.
  Then $(f\circ\varphi_{<n})(x,y)$ is stable as well.
\end{lem}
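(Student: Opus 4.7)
The plan is to reduce instability of the composition to instability of one of the coordinates using uniform continuity of $f$ together with Ramsey's theorem.

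Suppose for contradiction that $\chi(x,y) = f(\varphi_0(x,y),\ldots,\varphi_{n-1}(x,y))$ is not stable, so it fails to be $\varepsilon$-stable for some $\varepsilon > 0$. By definition, there is an infinite sequence $(a_ib_i \colon i < \omega)$ in a model of $T$ such that $|\chi(a_i,b_j) - \chi(a_j,b_i)| \geq \varepsilon$ for all $i < j < \omega$. Since $f \colon [0,1]^n \to [0,1]$ is continuous on a compact domain, it is uniformly continuous: pick $\delta > 0$ such that whenever $|s_k - t_k| < \delta$ for all $k < n$, we have $|f(\bar s) - f(\bar t)| < \varepsilon$.

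Taking the contrapositive, for every pair $i < j$ there is some index $k(i,j) < n$ such that $|\varphi_{k(i,j)}(a_i,b_j) - \varphi_{k(i,j)}(a_j,b_i)| \geq \delta$. Since the function $k \colon [\omega]^2 \to n$ takes only finitely many values, Ramsey's theorem yields an infinite subset $I \subseteq \omega$ on which $k$ is constant, say equal to $k^* < n$. Restricting the sequence to indices in $I$ (and re-enumerating), we obtain an infinite sequence $(a'_ib'_i \colon i < \omega)$ such that $|\varphi_{k^*}(a'_i,b'_j) - \varphi_{k^*}(a'_j,b'_i)| \geq \delta$ for all $i < j$. This contradicts the $\delta$-stability of $\varphi_{k^*}$, completing the proof.

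There is no real obstacle here; the only point worth noting is that it suffices to check $\varepsilon$-stability for each $\varepsilon > 0$ separately, so we obtain a single witness index $k^*$ depending on $\varepsilon$ (and hence on $\delta$), which is all that is needed.
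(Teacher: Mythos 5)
Your proof is correct. The paper states this lemma without proof (introducing it as ``the following easy result''), so there is nothing to compare against, but your argument --- uniform continuity of $f$ on the compact cube $[0,1]^n$ to extract a $\delta$, then the pigeonhole/Ramsey step to fix a single witness coordinate $k^*$, contradicting $\delta$-stability of $\varphi_{k^*}$ --- is exactly the expected one and is carried out correctly, including the re-enumeration of the monochromatic subsequence.
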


Let $a$, $b$ and $c$ be in the appropriate sorts.
Then:
\begin{align*}
  \varphi(a,b) & = \chi_{\varphi,\psi}(a,bcee'), \\
  \psi(a,c) & = \chi_{\varphi,\psi}(a,bcee).
\end{align*}
Thus every instance of $\varphi$ and $\psi$ is an instance of $\chi_{\varphi,\psi}$,
so every $\varphi$-predicate or $\psi$-predicate (with parameters in $M$) is a
$\chi_{\varphi,\psi}$-predicate.
Moreover, if $B \subseteq M$ and $\rho(x)$ is $B$-definable as a
$\varphi$-predicate (or $\psi$-predicate) then it is $B$-invariant and
therefore a $B$-definable $\chi_{\varphi,\psi}$-predicate.
Notice that there is no need to assume here that $e,e' \in B$.

We therefore obtain for every set $B \subseteq M$ quotient mappings
$\theta_\varphi\colon \tS_{\chi_{\varphi,\psi}}(B) \to \tS_\varphi(B)$ and
$\theta_\psi\colon \tS_{\chi_{\varphi,\psi}}(B) \to \tS_\psi(B)$, and if $B \subseteq C$ then the following
diagram commutes:
\begin{gather*}
  \xymatrix{
    \tS_{\chi_{\varphi,\psi}}(C) \ar[rr] \ar[d] & & \tS_\varphi(C) \times \tS_\psi(C) \ar[d] \\
    \tS_{\chi_{\varphi,\psi}}(B) \ar[rr]        & & \tS_\varphi(B) \times \tS_\psi(B)
  }
\end{gather*}

Let us now return to the situation we started with, namely $A \subseteq M$
algebraically closed and a pair of compatible $p_\varphi \in \tS_\varphi(A)$ and
$p_\psi \in \tS_\psi(A)$.
Since they are compatible, there exists
$p_{\chi_{\varphi,\psi}} \in \tS_{\chi_{\varphi,\psi}}(A)$ such that
$p_\varphi = \theta_\varphi(p_{\chi_{\varphi,\psi}})$, $p_\psi = \theta_\psi(p_{\chi_{\varphi,\psi}})$ ($p_\varphi$ and $p_\psi$
actually determine $p_{\chi_{\varphi,\psi}}$, but we do not need this fact).

Let $q_{\chi_{\varphi,\psi}} \in \tS_{\chi_{\varphi,\psi}}(M)$ be the unique extension of
$p_{\chi_{\varphi,\psi}}$ which is $A$-definable, and let
$q_\varphi' = \theta_\varphi(q_{\chi_{\varphi,\psi}}) \in \tS_\varphi(M)$, $q_\psi' = \theta_\psi(q_{\chi_{\varphi,\psi}})$.
Then $q_\varphi'$ is definable over $M$ (by stability of $\varphi$) and
is invariant under $\Aut(M/A)$ (since $q_{\chi_{\varphi,\psi}}$ is).
Since $M$ is strongly $(|A|+\omega)^+$-homogeneous, it follows that the
definition $d_{q_\varphi'}\varphi(y)$ is $A$-invariant, and therefore
$A$-definable.
By the commutativity of the diagram above (with $B = A$, $C = M$)
$q'_\varphi$ extends $p_\varphi$.
Therefore, by uniqueness, $q_\varphi = q'_\varphi$.
Similarly, $q_\psi = q'_\psi$, so $q_\varphi \cup q_\psi \subseteq q_{\chi_{\varphi,\psi}}$ is satisfiable.

\subsection{Global stability}

\begin{dfn}
  A theory $T$ is stable if all formulae are stable in $T$.
\end{dfn}

\begin{dfn}
  A theory $T$ is $\lambda$-stable if for all $n < \omega$ and all
  sets $A$ such that $|A| \leq \lambda$:
  $\|\tS_n(A)\| \leq \lambda$.
\end{dfn}

\begin{dfn}
  Let $M$ be a model and $p \in \tS_n(M)$.
  We say that $p$ is \emph{definable} if $p\rest_\varphi$ is for every
  formula of the form $\varphi(x_{<n},\bar y)$, i.e., if for every such
  formula there is an $M$-definable predicate $d_p\varphi(\bar y)$, called
  the \emph{$\varphi$-definition} of $p$, such that for all $\bar b \in M$:
  $$\varphi(\bar x,\bar b)^p = d_p\varphi(\bar b).$$
\end{dfn}

\begin{thm}
  The following are equivalent for a theory $T$:
  \begin{enumerate}
  \item $T$ is stable.
  \item All types over models are definable.
  \item $T$ is $\lambda$-stable for all $\lambda$ such that $\lambda = \lambda^{|T|}$.
  \item $T$ is $\lambda$-stable for some $\lambda \geq |T|$.
  \end{enumerate}
\end{thm}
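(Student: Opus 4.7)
The plan is a cyclic proof (i) $\Rightarrow$ (ii) $\Rightarrow$ (iii) $\Rightarrow$ (iv) $\Rightarrow$ (i), mimicking the classical argument while appealing to the local stability machinery of \fref{sec:LocStab} at the most essential spots.

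For (i) $\Rightarrow$ (ii), fix $M \models T$ and $p \in \tS_n(M)$, and let $\varphi(\bar x,\bar y)$ be any formula. Grouping $\bar x$ into a single variable in an imaginary sort (via \fref{rmk:PsMetImg}) places us in the setting of \fref{sec:LocTyp}; the restriction $p\rest_\varphi$ is then a complete $\varphi$-type over $M$, and since $\varphi$ is stable by hypothesis, \fref{prp:PhiDef} furnishes an $M$-definable $\tilde\varphi$-predicate $d_p\varphi$ defining it. Carrying this out formula by formula exhibits $p$ as definable.

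For (ii) $\Rightarrow$ (iii), suppose $\lambda = \lambda^{|T|}$ (so in particular $\lambda \geq |T|$) and let $A$ be a parameter set with $|A| \leq \lambda$. By Downward L\"owenheim--Skolem (\fref{fct:DLS}) there is $M \supseteq A$ with $\|M\| \leq |A| + |\cL| \leq \lambda$; fix a dense subset $D \subseteq M$ with $|D| \leq \lambda$. Any $p \in \tS_n(A)$ lifts to some $q \in \tS_n(M)$, and by (ii) $q$ is determined by its system of $\varphi$-definitions $\langle d_q\varphi : \varphi \in \cL\rangle$. Each $d_q\varphi$ is an $M$-definable predicate, representable as a forced limit $\flim_k \psi_k(\bar y,\bar b_k)$ with $\psi_k \in \cL$ and $\bar b_k \in M$; by uniform continuity of each $\psi_k$ (\fref{prp:UnifContLanguage}) and density of $D$, the $\bar b_k$ may be replaced by approximations from $D$ close enough (as $k$ grows) that the forced limit is unchanged. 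The number of such sequences is at most $(|D|+|T|)^{\aleph_0} \leq \lambda^{|T|} = \lambda$, bounding the number of $M$-definable predicates by $\lambda$ and hence the number of systems of $\varphi$-definitions by $\lambda^{|T|} = \lambda$. Thus $|\tS_n(M)| \leq \lambda$, and since restriction $\tS_n(M) \to \tS_n(A)$ is surjective, $\|\tS_n(A)\| \leq |\tS_n(A)| \leq \lambda$. The implication (iii) $\Rightarrow$ (iv) is then immediate: $\lambda = 2^{|T|}$ satisfies both $\lambda^{|T|} = 2^{|T|\cdot|T|} = \lambda$ and $\lambda \geq |T|$.

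For (iv) $\Rightarrow$ (i), I argue contrapositively. If some $\varphi(x,y)$ is unstable, then the construction in the (iv) $\Rightarrow$ (i) direction of \fref{prp:LocStabCnt} (using a linear order of cardinality $\mu$ with more than $\mu$ initial segments) produces, for every $\mu \geq |T|$, a model $M$ with $\|M\| \leq \mu$ and $\|\tS_\varphi(M)\| > \mu$. The restriction map $\tS_n(M) \to \tS_\varphi(M)$ is a uniformly continuous surjection (uniform continuity from that of $\varphi$), so the density character of the source dominates that of the target; hence $\|\tS_n(M)\| > \mu$, and $T$ fails to be $\mu$-stable for every $\mu \geq |T|$, contradicting (iv). The main obstacle is the careful bookkeeping in (ii) $\Rightarrow$ (iii), namely verifying that $M$-definable predicates can be counted by forced limits whose parameters lie in a fixed dense subset of $M$; this rests on stability of the forced-limit construction under sufficiently uniform parameter approximation, after which everything else is a routine adaptation of the classical discrete argument or a direct appeal to the local results.
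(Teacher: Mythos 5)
Your proof routes the cycle slightly differently from the paper's ((i) $\Rightarrow$ (ii) $\Rightarrow$ (iii) $\Rightarrow$ (iv) $\Rightarrow$ (i), versus the paper's (1) $\Leftrightarrow$ (2) combined with (1) $\Rightarrow$ (3) $\Rightarrow$ (4) $\Rightarrow$ (1)), but the core ideas and the reliance on \fref{prp:LocStabCnt} are the same. Two points deserve attention. First, in (ii) $\Rightarrow$ (iii), your counting of $M$-definable predicates by enumerating forced-limit expressions $\flim_k \psi_k(\bar y,\bar b_k')$ with parameters $\bar b_k'$ drawn from a dense $D \subseteq M$ is workable but a little delicate: after perturbing $\bar b_k$ to $\bar b_k' \in D$, the resulting sequence converges to $d_q\varphi$ at rate roughly $2^{-k+1}$ instead of $2^{-k}$, so for $\flim$ to equal the genuine limit you must either reindex or tighten both the original convergence rate and the perturbation before starting. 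The paper sidesteps this bookkeeping entirely by using the metric density bound $\|\tS_\varphi(M)\| \leq \lambda$ from \fref{prp:LocStabCnt} and the cardinal arithmetic fact that a metric space of density character $\leq \lambda$ has at most $\lambda^\omega = \lambda$ points.

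Second, and more substantively, your (iv) $\Rightarrow$ (i) contains a genuine gap. You produce a model $M$ with $\|M\| \leq \mu$ but $\|\tS_n(M)\| > \mu$ and conclude that $T$ fails $\mu$-stability. However, $\mu$-stability is defined by quantifying over parameter \emph{sets} $A$ with $|A| \leq \mu$, and the cardinality $|M|$ may greatly exceed $\mu$ even though the density character $\|M\|$ does not (Downward L\"owenheim--Skolem in this setting bounds only the latter). The missing observation, which the paper supplies explicitly, is to pass to a dense $M_0 \subseteq M$ with $|M_0| \leq \mu$ and note that the restriction $\tS_n(M) \to \tS_n(M_0)$ is a homeomorphism and isometry (types over a dense subset determine types over the whole model by uniform continuity of formulae); hence $\|\tS_n(M_0)\| = \|\tS_n(M)\| > \mu$, violating $\mu$-stability. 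With this step inserted the argument is complete and coincides in substance with the paper's.
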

\begin{proof}
  \begin{cycprf}
  \item[\eqnum{1}{2}] By \fref{prp:LocStabCnt}.
  \item[\impnum{1}{3}] Assume $T$ is stable $\lambda = \lambda^{|T|}$, $n < \omega$, and
    $|A| \leq \lambda$.
    Then by Downward L\"owenheim-Skolem we can find $M \supseteq A$ such that
    $\|M\| \leq \lambda$.
    Let $\varphi(x_{<n},\bar y)$ be any formula.
    Then by \fref{prp:LocStabCnt} we have
    $\|\tS_\varphi(M)\| \leq \lambda$, whereby $|\tS_\varphi(M)| \leq \lambda^\omega = \lambda$.
    Let $\{\varphi_i(x_{<n},\bar y)\colon i < |T|\}$ enumerate all formulae of this
    form.
    Then $|\tS_n(M)| \leq \prod |\tS_{\varphi_i}(M)| \leq \lambda^{|T|} = \lambda$, and \emph{a fortiori}
    $\|\tS_n(M)\| \leq \lambda$.
  \item[\impnext] Let $\lambda = 2^{|T|}$.
  \item[\impfirst] Let $T$ be $\lambda$-stable ($\lambda \geq |T|$) and
    $\varphi(x_{<n},\bar y)$ be any formula, and we will
    show that $\varphi$ is stable in $T$.
    Let $M$ be any model such that $\|M\| \leq \lambda$, so let $M_0 \subseteq M$ be a
    dense subset such that $|M_0| = \lambda$.
    Then $\tS_n(M) = \tS_n(M_0)$ (i.e., the quotient mapping
    $\tS_n(M) \to \tS_n(M_0)$ is a homeomorphism and an isometry).
    Since $\varphi$ is uniformly continuous, the quotient
    mapping $\tS_n(M) \to \tS_\varphi(M)$ is uniformly continuous as a mapping
    between metric spaces.
    Therefore $\|\tS_\varphi(M)\| \leq \|\tS_n(M)\| = \|\tS_n(M_0)\| \leq \lambda$.

    Since this holds for all $M$ such that $\|M\| \leq \lambda$, we conclude
    by \fref{prp:LocStabCnt} that $\varphi$ is stable.
  \end{cycprf}
\end{proof}

\begin{conv}
  From now on we assume $T$ is stable.
\end{conv}

\begin{prp}
  \label{prp:GlobStat}
  Let $A \subseteq M$, where $A$ is algebraically closed, and let
  $p(x) \in \tS_x(A)$.
  Then $p$ has a unique extension to $M$, denoted
  $p\rest^M$, which is $A$-definable.
  Moreover, the $A$-definable definitions of such extensions do
  not depend on $M$, and will be denoted as usual by $d_p\varphi$.
\end{prp}
\begin{proof}
  For every formula $\varphi(x,y)$ let $d_p\varphi = d_{p\rest_\varphi} \varphi$.
  Then uniqueness and moreover part are already a consequence of
  \fref{prp:LocStat}.
  Thus all that is left to show is that the following set of
  conditions is satisfiable (and therefore a complete type):
  \begin{align*}
    p\rest^M
    & = \{\varphi(x,b) = d_p\varphi(x,b)^M\colon \varphi(x,y) \in \cL,
    b \in M \text{ in the sort of } y\} \\
    & = \bigcup_{\varphi(x,y) \in \cL} (p\rest_\varphi)\rest^M.
  \end{align*}
  (Here $x$ is fixed but $y$ varies with $\varphi$.)

  By compactness it suffices to show this for unions over finitely
  many formulae $\varphi$.
  For two formulae this was proved is the previous subsection, by
  coding both formulae in a single one.
  But we can repeat this process encoding any finite set of formulae
  in a single one, whence the required result.
\end{proof}

\begin{dfn}
  \label{dfn:NonFork}
  Let $A \subseteq B$, $p \in \tS(B)$.
  We say that $p$ \emph{does not fork} over $A$ if there exists an
  extension $p \subseteq q \in \tS_n(\acl(B))$ such that all the definitions
  $d_q\varphi$ are over $\acl(A)$.

  If $\bar a$ is a tuple, $A$ and $B$ sets, and $\tp(\bar a/AB)$ does
  not fork over $A$, we say $\bar a$ is \emph{independent} from $B$
  over $A$, in symbols $\bar a \ind_A B$.
\end{dfn}

\begin{cor}
  \label{cor:GlobStat}
  Let $A \subseteq B$, where $A$ is algebraically closed, and let
  $p \in \tS_n(A)$.
  Then there exists a unique $q \in \tS_n(B)$ extending $p$ and
  non-forking over $A$.
  This unique non-forking extension is denoted $p\rest^B$, and is
  given explicitly as
  $$p\rest^B = \{\varphi(\bar x,\bar b) = d_p\varphi(\bar b)\colon
  \varphi(\bar x,\bar y) \in \cL, \bar b \in B\}.$$
\end{cor}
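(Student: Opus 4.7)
The plan is to derive everything from the immediately preceding Proposition \ref{prp:GlobStat}, which provides unique $A$-definable extensions of $p$ to any model $M\supseteq A$, together with the moreover clause saying the definitions $d_p\varphi$ are intrinsic to $p$. Existence will come from embedding $B$ in a model and restricting; uniqueness will come from showing that any non-forking witness over $\acl(B)$ is itself uniquely determined by $p$.

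For existence, I would invoke the Downward L\"owenheim--Skolem Theorem (\fref{fct:DLS}) to embed $B\subseteq M$ for some $M\models T$, noting that then $\acl(B)\subseteq M$ automatically by \fref{dfn:Alg}. By \fref{prp:GlobStat} I get the unique $A$-definable extension $\hat p = p\rest^M\in\tS_n(M)$, and I set $q = \hat p\rest_B$. To check that $q$ does not fork over $A$, I exhibit the witness $q' = \hat p\rest_{\acl(B)}\in\tS_n(\acl(B))$: for each formula $\varphi(\bar x,\bar y)$ and each $\bar c\in\acl(B)$ one has $\varphi(\bar x,\bar c)^{q'} = \varphi(\bar x,\bar c)^{\hat p} = d_p\varphi(\bar c)$, so $d_{q'}\varphi = d_p\varphi$ is a predicate over $A = \acl(A)$, as required by \fref{dfn:NonFork}.

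For uniqueness, suppose $q\in\tS_n(B)$ extends $p$ and does not fork over $A$, with witness $q'\in\tS_n(\acl(B))$. The key observation is that $q'$ is determined by $p$. Indeed, $q'$ is $\acl(B)$-definable (since it is even $A$-definable). Choose any model $M'\supseteq\acl(B)$; applying \fref{prp:GlobStat} with $\acl(B)$ as the algebraically closed base and using the moreover clause, $q'$ admits an extension $\tilde q\in\tS_n(M')$ whose $\varphi$-definitions agree with $d_{q'}\varphi$, hence remain over $A$. Thus $\tilde q$ is an $A$-definable extension of $p\subseteq q'\subseteq\tilde q$ to the model $M'$, so by the uniqueness clause of \fref{prp:GlobStat} (applied with base $A$) it must equal $p\rest^{M'}$. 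Restricting back, $q' = (p\rest^{M'})\rest_{\acl(B)}$, which is determined by $p$; hence $q = q'\rest_B$ is determined by $p$ as well.

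The explicit formula is then automatic: for each $\varphi$ and each $\bar b\in B$, the uniqueness argument gives
\[
\varphi(\bar x,\bar b)^q \;=\; \varphi(\bar x,\bar b)^{q'} \;=\; d_{q'}\varphi(\bar b) \;=\; d_p\varphi(\bar b),
\]
so $p\rest^B$ is precisely the displayed set of conditions (which is a complete type, being the restriction of the complete type $p\rest^{M'}$ to $B$). There is no genuine obstacle here: the only delicate step is the double invocation of \fref{prp:GlobStat} --- once with base $A$ and once with base $\acl(B)$ --- to conclude that $A$-definable extensions are unique already at the level of $\acl(B)$, rather than only at the level of models.
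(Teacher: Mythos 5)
Your argument is correct and follows essentially the same route as the paper: embed $B$ in a model $M$, restrict the unique $A$-definable extension $p\rest^M$ down to $\acl(B)$ and $B$ for existence, and for uniqueness lift the non-forking witness $q'\in\tS_n(\acl(B))$ up to a model, invoke \fref{prp:GlobStat} once with base $\acl(B)$ and once with base $A$, and conclude that ${q'}\rest^{M'}=p\rest^{M'}$. The only difference is cosmetic --- you spell out the double application of \fref{prp:GlobStat} that the paper's terser proof leaves implicit.
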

\begin{proof}
  Let $M$ be any model such that $B \subseteq M$.
  Then $\acl(B) \subseteq M$, and $(p\rest^M)\rest_{\acl(B)} = p\rest^{\acl(B)}$ is
  $A$-definable, so $p\rest^B$ is a non-forking extension of $p$.

  Conversely, let $q \in \tS_n(B)$ be a non-forking extension of $p$.
  Then there exists $q' \in \tS_n(\acl(B))$ which is $A$-definable.
  Then ${q'}\rest^M$ is an $A$-definable extension of $p$, so
  ${q'}\rest^M = p\rest^M$, whereby $q = p\rest^B$.
\end{proof}

\begin{cor}
  \label{cor:NonForkSameDef}
  Let $A$ and $B$ be sets, $\bar a$ a tuple, and let
  $p = \tp(\bar a/\acl(A))$ and $q = \tp(\bar a/\acl(AB))$.
  Then $\bar a \ind_A B$ if and only if $d_p\varphi = d_q\varphi$ for every
  formula $\varphi(\bar x,\bar y)$.
\end{cor}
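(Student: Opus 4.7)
The plan is to reduce everything to the uniqueness statement in \fref{cor:GlobStat}. Since $\acl(A) \subseteq \acl(AB)$, the type $q$ is automatically an extension of $p$. Thus, once I know that $q$ itself has definitions over $\acl(A)$, applying \fref{cor:GlobStat} with base the algebraically closed set $\acl(A)$ and extension to $\acl(AB)$ forces $q = p\rest^{\acl(AB)}$, and the explicit description of $p\rest^{\acl(AB)}$ given there yields $d_q\varphi = d_p\varphi$ for every $\varphi$.

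The backward implication is immediate: if $d_p\varphi = d_q\varphi$ for all $\varphi$, then every definition of $q \in \tS_n(\acl(AB))$ lies over $\acl(A)$, so $q$ itself witnesses that $\tp(\bar a/AB) = q\rest_{AB}$ does not fork over $A$, giving $\bar a \ind_A B$.

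For the forward implication, assume $\bar a \ind_A B$, so by \fref{dfn:NonFork} there is some $q' \in \tS_n(\acl(AB))$ extending $\tp(\bar a/AB)$ whose definitions $d_{q'}\varphi$ all lie over $\acl(A)$. The only nontrivial step is to transfer this property from $q'$ to $q$. I would work inside a sufficiently saturated, strongly homogeneous monster $\fM$ containing $AB$: realize $q'$ by some $\bar a' \in \fM$, note that $\bar a' \equiv_{AB} \bar a$, and invoke strong homogeneity to produce $\sigma \in \Aut(\fM/AB)$ with $\sigma(\bar a') = \bar a$, whence $\sigma(q') = q$. Since $\sigma$ fixes $A$ pointwise it permutes $\acl(A)$ setwise, so the pushforward $\sigma \cdot d_{q'}\varphi = d_q\varphi$ remains a definable predicate over $\acl(A)$. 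Combining this with the observation of the first paragraph completes the forward direction.

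The main obstacle is purely conceptual: the abstract non-forking witness $q'$ provided by the definition need not equal the actual type of $\bar a$ over $\acl(AB)$, because distinct extensions of $\tp(\bar a/AB)$ to $\acl(AB)$ can agree on $AB$ while differing on algebraic closures (the global analogue of the phenomenon controlled by \fref{lem:LocAclTrans}). Bridging this gap via an $AB$-automorphism of the monster is the only nontrivial ingredient; everything else is formal from \fref{cor:GlobStat} and the definitions.
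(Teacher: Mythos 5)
Your argument is correct and is essentially the paper's proof: the right-to-left direction is immediate because the $d_p\varphi$ are over $\acl(A)$ by construction, and the left-to-right direction uses exactly the paper's two steps, namely (i) conjugating the abstract non-forking witness $q'$ onto the actual type $q$ by an $AB$-automorphism, which permutes $\acl(A)$ setwise and so preserves $\acl(A)$-definability of the definitions, and (ii) invoking the uniqueness of $\acl(A)$-definable extensions (you cite \fref{cor:GlobStat}, the paper passes to a model $M\supseteq AB$ and uses \fref{prp:GlobStat}, but these are interchangeable) to conclude $d_q\varphi = d_p\varphi$.
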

\begin{proof}
  Right to left is immediate from the definition.
  So assume $\bar a \ind_A B$.
  This means there is a type $q' \in \tS_n(\acl(AB))$ extending
  $\tp(\bar a/AB)$ such that $d_{q'}\varphi$ is $\acl(A)$-definable for all
  $\varphi$.
  Then $q$ and $q'$ are conjugates by an automorphism fixing $AB$.
  Such an automorphism would fix $\acl(A)$ setwise, so
  $d_q\varphi$ is $\acl(A)$-definable for all $\varphi$.
  Now let $M \supseteq AB$ be a model, and $r = q\rest^M$.
  Then $d_q\varphi = d_r\varphi$ by definition, and $r$ is $\acl(A)$-definable and
  extends $p$, whereby $d_p\varphi = d_r\varphi$.
\end{proof}

We conclude:
\begin{thm}
  Assume $T$ is stable.
  Then:
  \begin{enumerate}
  \item \emph{Invariance:} The relation $\ind$ is
    automorphism-invariant.
  \item \emph{Symmetry:}
    $\bar a \ind_A \bar b \Longleftrightarrow \bar b \ind_A \bar a$
  \item \emph{Transitivity:}
    $\bar a \ind_A BC$ if and only if
    $\bar a \ind_A B$ and $\bar a \ind_{AB} C$.
  \item \emph{Existence:}
    For all $\bar a$, $A$ and $B$ there is $\bar b \equiv_A \bar a$ such
    that $\bar b \ind_A B$.
  \item \emph{Finite character:}
    $\bar a \ind_A B$ if and only if $\bar a \ind_A \bar b$ for all
    finite tuples $\bar b \in B$.
  \item \emph{Local character:}
    For all $\bar a$ and $A$ there is $A_0 \subseteq A$ such that
    $|A_0| \leq |T|$ and $\bar a \ind_{A_0} A$.
  \item \emph{Stationarity:}
    Assume $A$ is algebraically closed, and $B \supseteq A$.
    If $\bar a \equiv_A \bar b$ and $\bar a \ind_A B$, $\bar b \ind_A B$
    then $\bar a \equiv_{AB} \bar b$.
  \end{enumerate}
\end{thm}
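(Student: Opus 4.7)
The plan is to derive all seven properties from the two key corollaries already established: \fref{cor:GlobStat} (over an algebraically closed set, every type has a unique non-forking extension, given explicitly by its $\varphi$-definitions $d_p\varphi$) and \fref{cor:NonForkSameDef} (non-forking $\bar a\ind_A B$ is equivalent to $d_p\varphi = d_q\varphi$ for every $\varphi$, where $p = \tp(\bar a/\acl(A))$ and $q = \tp(\bar a/\acl(AB))$). Because $\acl$ and ``being definable over $X$'' are automorphism-invariant notions, Invariance is immediate. For Existence I would realise the unique non-forking extension $\tp(\bar a/\acl(A))\rest^{\acl(AB)}$ by some tuple $\bar b$, so that $\bar b\equiv_A\bar a$ and $\bar b\ind_A B$. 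Stationarity will be the uniqueness clause of \fref{cor:GlobStat} applied to $p = \tp(\bar a/A) = \tp(\bar b/A)$: when $A$ is algebraically closed, $\tp(\bar a/AB)$ and $\tp(\bar b/AB)$ are both equal to the unique non-forking extension of $p$.

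Transitivity I would obtain by chaining definitions along $\acl(A)\subseteq\acl(AB)\subseteq\acl(ABC)$: the $\varphi$-definitions of $\tp(\bar a/\acl(ABC))$ lie in $\acl(A)$ if and only if they lie in $\acl(AB)$ (giving $\bar a\ind_{AB}C$) and the $\varphi$-definitions of $\tp(\bar a/\acl(AB))$ lie in $\acl(A)$ (giving $\bar a\ind_A B$). Finite character is routine from \fref{cor:NonForkSameDef}: the equality $d_p\varphi = d_q\varphi$ can be tested at individual finite $\bar b$-tuples matching $\bar y$, each such test involving only the condition $\bar a\ind_A\bar b$.

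Local character will be the substantive use of \fref{prp:PhiDef}: for each formula $\varphi(\bar x,\bar y)\in\cL$, the $\acl(A)$-definable $\varphi$-definition of $\tp(\bar a/\acl(A))$ is a forced limit $\flim_n\varphi_n(\bar y,\bar d_n)$ with each $\bar d_n$ a finite tuple from $\acl(A)$. By \fref{lem:AlgTyp} each element of $\acl(A)$ is algebraic over a countable subset of $A$, so one can find countable $A_\varphi\subseteq A$ with all $\bar d_n\in\acl(A_\varphi)$. Setting $A_0 = \bigcup_\varphi A_\varphi$ yields $|A_0|\leq|T|\cdot\aleph_0 = |T|$ and places every $d_p\varphi$ over $\acl(A_0)$; by \fref{cor:GlobStat}, $\tp(\bar a/\acl(A))$ will then be the unique non-forking extension of $\tp(\bar a/\acl(A_0))$, i.e.\ $\bar a\ind_{A_0}A$.

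The principal obstacle is Symmetry, which I intend to deduce formula by formula from \fref{prp:LocSym}. Take an $\omega$-saturated, strongly $(|A|+\omega)^+$-homogeneous model $M\supseteq\acl(A\bar a\bar b)$; for each formula $\varphi(\bar x,\bar y)$ consider $p = \tp_\varphi(\bar a/M)\in\tS_\varphi(M)$ and $q = \tp_{\tilde\varphi}(\bar b/M)\in\tS_{\tilde\varphi}(M)$. The hypothesis $\bar a\ind_A\bar b$, together with \fref{cor:GlobStat} and the homogeneity of $M$, should force $d_p\varphi$ to be $\Aut(M/\acl(A))$-invariant, hence $\acl(A)$-definable by \fref{lem:InvarPred}. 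Local symmetry $d_p\varphi(\bar y)^{q'} = d_{q'}\tilde\varphi(\bar x)^p$ holds for every $\Aut(M/\acl(A))$-conjugate $q'$ of $q$; running over all such $q'$ would then show that $d_q\tilde\varphi$ is also fixed by $\Aut(M/\acl(A))$, hence $\acl(A)$-definable, yielding $\bar b\ind_A\bar a$. Making the passage from invariance of pointwise values at conjugates to full invariance of the entire predicate $d_q\tilde\varphi$ precise is the main technical difficulty.
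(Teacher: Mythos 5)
Most of your argument parallels the paper's and is correct: Invariance, Existence, Stationarity, Transitivity and Finite Character are all derived the same way (from \fref{cor:GlobStat} and \fref{cor:NonForkSameDef}). Your Local Character argument differs from the paper's — you extract a countable parameter set directly from the forced-limit presentation of each $d_p\varphi$ rather than passing through the canonical base $\Cb(p) = \{\Cb_\varphi(p)\colon\varphi\}$ as the paper does — but the two routes are essentially equivalent and yours works.

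The genuine gap is in Symmetry. You take $p = \tp_\varphi(\bar a/M)$ for a large model $M\supseteq\acl(A\bar a\bar b)$ and assert that $\bar a\ind_A\bar b$ forces $d_p\varphi$ to be $\Aut(M/\acl(A))$-invariant, hence $\acl(A)$-definable. That assertion is false: $\acl(A)$-definability of $d_p\varphi$ would amount to $\bar a\ind_A M$, which is not a consequence of $\bar a\ind_A\bar b$. What \emph{is} $\acl(A)$-definable is the definition of the \emph{non-forking extension} $\tp_\varphi(\bar a/A)\rest^M$, not of the actual type $\tp_\varphi(\bar a/M)$, and conflating these two types is what sinks the plan. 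The ``running over conjugates of $q$'' step you flag as unclear is likewise not recoverable on its own terms. The argument the paper has in mind is simpler and avoids the detour through invariance: fix $A=\acl(A)$ and $\varphi$, let $p = \tp_\varphi(\bar a/A)$, $q = \tp_{\tilde\varphi}(\bar b/A)$. From $\bar a\ind_A\bar b$ you get $\varphi(\bar a,\bar b) = d_p\varphi(\bar b)$; since $d_p\varphi$ is an $A$-definable $\tilde\varphi$-predicate, this value equals $d_p\varphi(\bar y)^{q\rest^M}$, which by \fref{prp:LocSym} applied to $p\rest^M,q\rest^M$ equals $d_q\tilde\varphi(\bar x)^{p\rest^M} = d_q\tilde\varphi(\bar a)$; so $\tilde\varphi(\bar b,\bar a) = d_q\tilde\varphi(\bar a)$ for every $\varphi$, giving $\bar b\ind_A\bar a$.
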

\begin{proof}
  Invariance is clear.

  Symmetry follows from \fref{prp:LocSym}.

  Transitivity is immediate from \fref{cor:NonForkSameDef}.

  For existence, we may replace $B$ with any model containing $B$.
  Then let $p'$ be any extension of $p = \tp(\bar a/A)$ to $\acl(A)$,
  and then let $\bar b$ realise the unique non-forking extension of
  $p'$ to $M$.

  For finite character, we may replace $A$ with $\acl(A)$ without
  changing the statement.
  But then $\bar a \ind_A B$ if and only if
  $\tp(\bar a/AB) = \tp(\bar a/A)\rest^{AB}$, and if this fails it is
  due to some finite tuple in $B$.

  Let $p = \tp(\bar a/\acl(A))$.
  Recall we defined $\Cb_\varphi(p)$ as the canonical parameter
  of $d_p\varphi$.
  Let $\Cb(p) = \{\Cb_\varphi(p)\colon \varphi(\bar x,\ldots) \in \cL\}$ be the \emph{canonical
    base} of $p$.
  Then $|\Cb(p)| = |T|$, and $p$ is definable over its canonical base
  so $\bar a \ind_{\Cb(p)} \acl(A)$.
  For each $c \in \Cb(p)$ we know that $\tp(c/A)$ is algebraic, and going
  back to the definition of algebraicity in \fref{lem:AlgTyp} we
  see that countably many parameters in $A$ suffice to witness this.
  Let $A_0$ be the union of all these witness sets for all
  $c \in \Cb(p)$.
  Then $A_0 \subseteq A$, $|A_0| \leq |T|$, and $\Cb(p) \subseteq \acl(A_0)$, so
  $\bar a \ind_{\acl(A_0)} \acl(A)$, or equivalently,
  $\bar a \ind_{A_0} A$.

  Stationarity is just \fref{cor:GlobStat}.
\end{proof}

\appendix

\section{A remark on continuity moduli on bounded spaces}
\label{apdx:InvContMod}

The usual definition of (uniform) continuity in the metric setting
goes ``for all $\varepsilon > 0$ there is $\delta > 0$ such that\ldots'', whence our
definition of a continuity modulus as a function $\delta\colon (0,\infty) \to (0,\infty)$,
mapping each $\varepsilon$ to a corresponding $\delta$.
We would like to present here an alternative definition, which rather
goes the other way around.

In the cases which interest us all metric spaces (structures and type
spaces) are bounded, usually of diameter $\leq 1$.
We may therefore allow ourselves the following simplification:
\begin{conv}
  Hereafter, all metric spaces are bounded of diameter $\leq 1$.
\end{conv}

\begin{dfn}
  An \emph{inverse continuity modulus} is a continuous monotone
  function $\fu \colon [0,1] \to [0,1]$ such that $\fu(0) = 0$.

  We say that a mapping $f\colon (X,d) \to (X',d')$ \emph{respects} $\fu$, or
  that it is \emph{uniformly continuous with respect to $\fu$}, if for
  every $x,y \in X$:
  $$d'(f(x),f(y)) \leq \fu(d(x,y)).$$
\end{dfn}
In other words, an inverse uniform continuous modulus maps a $\delta$ to an
$\varepsilon$.
(In case that the destination space is not bounded we may still
consider inverse continuity moduli, but then we need to
allow the range of $\fu$ to be $[0,\infty]$.)

\begin{lem}
  Let $\fu$ be an inverse continuity modulus.
  For $\varepsilon > 0$ define $\delta(\varepsilon) = \sup \{t \in [0,1]\colon \fu(t) \leq \varepsilon\}$.
  Then $\delta$ is a continuity modulus, and every function which
  respects $\fu$ (as an inverse uniform continuity modulus) respects
  $\delta$ (as a uniform continuity modulus).
\end{lem}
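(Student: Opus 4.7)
The plan is to verify the two assertions separately: first that $\delta$ is actually a continuity modulus in the sense of \fref{dfn:ContMod} (i.e., takes strictly positive values), and then that respecting $\fu$ forces respecting $\delta$. Both should follow cleanly from the defining properties of an inverse continuity modulus — continuity, monotonicity, and $\fu(0)=0$.

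For the first point I would fix $\varepsilon>0$ and use continuity of $\fu$ at $0$ together with $\fu(0)=0$ to produce some $t>0$ with $\fu(t)\le\varepsilon$. This $t$ witnesses that the supremum defining $\delta(\varepsilon)$ is taken over a non-empty set containing a positive element, so $\delta(\varepsilon)\ge t>0$. (Since the sup is also trivially bounded by $1$, we land safely inside $(0,1]\subseteq(0,\infty)$.) For the second point, suppose $f$ respects $\fu$ and pick $x,y\in X$ with $d(x,y)<\delta(\varepsilon)$. By the definition of supremum, there exists $t\in[0,1]$ with $\fu(t)\le\varepsilon$ and $t>d(x,y)$. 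Monotonicity of $\fu$ then gives $\fu(d(x,y))\le\fu(t)\le\varepsilon$, and combining with $d'(f(x),f(y))\le\fu(d(x,y))$ yields $d'(f(x),f(y))\le\varepsilon$, which is exactly the condition that $f$ respects the continuity modulus $\delta$.

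There isn't really a hard step here — the lemma is essentially a bookkeeping translation between the two formulations. The only slightly subtle point, worth stating carefully, is the use of the characterization of the supremum: from $d(x,y)<\sup S$ we may extract an honest element $t\in S$ strictly greater than $d(x,y)$, which is what allows us to invoke monotonicity of $\fu$ at $t$ rather than having to worry whether the sup itself is attained. Once this is made explicit, nothing more is needed.
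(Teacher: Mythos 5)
Your proof is correct and follows the same route as the paper's: positivity of $\delta$ from continuity of $\fu$ at $0$, and the respecting claim from monotonicity of $\fu$ together with the definition of the supremum. You spell out the extraction of an element $t$ from the set defining the supremum slightly more explicitly than the paper does, but the argument is identical in substance.
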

\begin{proof}
  That $\varepsilon > 0 \Longrightarrow \delta(\varepsilon) > 0$ follows from the fact that $\fu(t) \to 0$ as
  $t \to 0$.
  Assume now that $f\colon (X,d) \to (X',d')$ respects $\fu$, $\varepsilon > 0$, and
  $d(x,y) < \delta(\varepsilon)$.
  By monotonicity of $\fu$ and definition of $\delta$:
  $$d'(f(x),f(y)) \leq \fu(d(x,y)) \leq \varepsilon.$$
\end{proof}

The converse is not much more difficult:
\begin{lem}
  Let $\delta$ be a continuity modulus.
  Then there exists an inverse continuity modulus $\fu$ such that
  every function respecting $\fu$ respects $\delta$.
\end{lem}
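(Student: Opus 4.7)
The plan is to build $\fu$ in two steps: first introduce a natural monotone candidate $\fu^{*}$ satisfying the key inequality, then smooth it into a continuous function by averaging.

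Define
\[
\fu^{*}(t) = \min\bigl(1,\,\inf\{\varepsilon > 0 : \delta(\varepsilon) > t\}\bigr),\qquad t\in [0,1],
\]
with the convention $\inf\emptyset = +\infty$. The function $\fu^{*}$ is non-decreasing, since shrinking $t$ can only enlarge the set of admissible $\varepsilon$. The essential property is tautological: if $t < \delta(\varepsilon)$ then $\varepsilon$ lies in the defining set, whence $\fu^{*}(t) \leq \varepsilon$. This immediately gives $\fu^{*}(0) = 0$ and, since $\delta(\varepsilon) > 0$ for every $\varepsilon > 0$, also $\lim_{t\to 0^{+}}\fu^{*}(t) = 0$. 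The only defect of $\fu^{*}$ is that it need not be continuous.

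To remedy this I would set
\[
\fu(t) = \frac{1}{t}\int_{0}^{t} \fu^{*}(s)\,ds \quad\text{for } t\in (0,1], \qquad \fu(0) = 0.
\]
Because $\fu^{*}$ is bounded and monotone it is integrable, so $\fu$ is continuous on $(0,1]$; continuity at $0$ follows from the pointwise bound $\fu(t) \leq \fu^{*}(t)$ (a direct consequence of $\fu^{*}$ being non-decreasing), together with $\fu^{*}(t)\to 0$. For monotonicity of $\fu$, note that for $t_1 < t_2$ the mean of $\fu^{*}$ on $[t_1,t_2]$ dominates its mean on $[0,t_1]$, again because $\fu^{*}$ is non-decreasing; this rearranges to $\fu(t_1) \leq \fu(t_2)$.

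The conclusion is then immediate: if $f$ respects $\fu$ and $d(x,y) < \delta(\varepsilon)$, then
\[
d'(f(x),f(y)) \leq \fu(d(x,y)) \leq \fu^{*}(d(x,y)) \leq \varepsilon,
\]
so $f$ respects $\delta$. The only point needing any real care is the monotonicity-and-continuity verification for the averaged $\fu$; there is no conceptual obstacle beyond elementary calculus with monotone functions.
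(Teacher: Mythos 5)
Your proposal correctly proves the lemma \emph{exactly as it is worded} -- but that wording appears to contain a typo, and the direction you prove is not the one the appendix needs. The two lemmas in this appendix are supposed to supply the two directions of the closing theorem (``uniformly continuous w.r.t.\ a standard modulus iff uniformly continuous w.r.t.\ an inverse one''). The first lemma constructs $\delta$ from $\fu$ and shows respects-$\fu$ implies respects-$\delta$. The second lemma must therefore construct $\fu$ from $\delta$ and show respects-$\delta$ implies respects-$\fu$ -- and that is exactly what the paper's own proof establishes (``Assume now that $f$ respects $\delta$\ldots therefore $f$ respects $\fu$''). The statement as printed, ``every function respecting $\fu$ respects $\delta$,'' is the wrong way around; taken literally it is trivially true, since $\fu \equiv 0$ already satisfies it.

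Your construction cannot prove the intended direction, and the reason is instructive. Your candidate $\fu^{*}$ is essentially the paper's $\fu_0$, and it has the right raw estimate: if $f$ respects $\delta$ then $d'(f(x),f(y)) \leq \fu^{*}(d(x,y))$. The problem is purely in the smoothing step. You replace $\fu^{*}$ by its running average, which gives $\fu \leq \fu^{*}$; but for the intended conclusion $d'(f(x),f(y)) \leq \fu(d(x,y))$ you need a continuous modulus that \emph{dominates} $\fu^{*}$, i.e.\ $\fu \geq \fu^{*}$. Averaging a monotone function from below under-smooths; you need an upper envelope. This is precisely what the paper's family of piecewise-linear ramps $\fu_1(\cdot,r')$ and the supremum $\fu(r) = \sup_{r'}\fu_1(r,r')$ achieve: $\fu(r) \geq \fu_1(r,r) = \fu_0(r)$, while continuity at $0$ is retained because $\fu(r) \leq \fu_0(2r)$ for small $r$. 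To salvage your approach you would have to average $\fu^{*}$ over an interval to the \emph{right} of $t$, e.g.\ $\fu(t) = \frac{1}{t}\int_t^{2t}\fu^{*}(s)\,ds$ (suitably capped near $t=1$), which by monotonicity gives $\fu(t) \geq \fu^{*}(t)$ while still forcing $\fu(t) \to 0$; but as written, the inequality $\fu \leq \fu^{*}$ points the wrong way.
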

\begin{proof}
  For $r,r' \in [0,1]$ define:
  \begin{align*}
    \fu_0(r) & = \inf \{\varepsilon > 0\colon \delta(\varepsilon) > r\} && (\inf \emptyset = 1) \\
    \fu_1(r,r') & =
    \begin{cases}
      \fu_0(r') & r \geq r'\\
      \fu_0(r')\cdot\left( \frac{2r}{r'} - 1 \right)
      & \frac{r'}{2} \leq r < r'\\
      0 & r < \frac{r'}{2}
    \end{cases} \\
    \fu(r) & = \sup_{r' \in [0,1]} \fu_1(r,r').
  \end{align*}
  Then $\fu_0 \colon \setR^+ \to \setR^+$ is an increasing function, not
  necessarily continuous.
  It is however continuous at $0$:
  $\lim_{r\to0^+} \fu_0(r) = 0 = \fu_0(0)$
  (since for every $\varepsilon > 0$, $\fu_0(\delta(\varepsilon)) \leq \varepsilon$).
  For every $r_0 > 0$, the family of function $r \mapsto \fu_1(r,r')$,
  indexed by $r'$, is equally continuous on $[r_0,1]$, so
  $\fu$ is continuous on $(0,1]$.
  We also have $r \leq 1/2 \Longrightarrow \fu(r) \leq \fu_0(2r)$ (since for $r' \geq 2r$,
  $\fu_1(r,r')$ contributes nothing to $\fu(r)$),
  whereby $\lim_{r\to0^+} \fu(r) = 0 = \fu(0)$.
  Therefore $\fu$ is continuous on $[0,1]$, and therefore an inverse
  continuity modulus.
  Observe also that:
  $$\fu(r) \geq \fu_1(r,r) = \fu_0(r)
  \geq \sup \{\varepsilon \leq 1\colon (\forall 0 < \varepsilon' < \varepsilon)(\delta(\varepsilon') \leq r)\}.$$

  Assume now that $f\colon (X,d) \to (X',d')$ respects $\delta$.
  If $x,y \in X$ and $\varepsilon > 0$ satisfy $d'(f(x),f(y)) > \varepsilon$,
  Then $d(x,y) \geq \delta(\varepsilon')$ for all $0 < \varepsilon' < \varepsilon$, whereby
  $\fu(d(x,y)) \geq \varepsilon$.
  Therefore $d'(f(x),f(y)) \leq \fu(d(x,y))$, and $f$ respects $\fu$, as
  required.
\end{proof}

Together we obtain:
\begin{thm}
  A mapping between bounded metric spaces $f\colon (X,d) \to (X',d')$ is
  uniformly continuous with respect to a (standard) continuity modulus
  if and only if it is uniformly continuous with respect to an inverse
  one.
  In other words, the two distinct definitions of continuity moduli
  give rise to the same notion of uniform continuity.
\end{thm}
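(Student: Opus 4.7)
The plan is to observe that this theorem is essentially a packaging of the two lemmas immediately preceding it, so the proof will consist of little more than citing them in the two directions.

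For the ``only if'' direction, I would assume $f$ respects some (standard) continuity modulus $\delta$. By the second lemma, there exists an inverse continuity modulus $\fu$ constructed from $\delta$ such that any function respecting $\fu$ in the sense of inverse moduli also respects $\delta$; but what I actually need is the converse implication at the level of $f$ itself, so I should check that the construction there produces an $\fu$ which $f$ respects. Reading the construction $\fu(r) = \sup_{r'} \fu_1(r,r')$ back, one sees that the inequality $\fu(r) \geq \fu_0(r) \geq \sup\{\varepsilon \leq 1 \colon (\forall 0 < \varepsilon' < \varepsilon)\, \delta(\varepsilon') \leq r\}$ implies that whenever $d'(f(x),f(y)) > \varepsilon$ one has $d(x,y) \geq \delta(\varepsilon')$ for all $\varepsilon' < \varepsilon$, and therefore $\fu(d(x,y)) \geq \varepsilon$. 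This is exactly the content of the closing paragraph of the second lemma, so $f$ respects $\fu$.

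For the ``if'' direction, I would assume $f$ respects an inverse continuity modulus $\fu$ and invoke the first lemma directly: the function $\delta(\varepsilon) = \sup\{t \in [0,1] \colon \fu(t) \leq \varepsilon\}$ is a (standard) continuity modulus, and the lemma guarantees that any $f$ respecting $\fu$ respects $\delta$.

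Since both directions reduce to quoting the preceding lemmas, I do not anticipate any obstacle; the only mild point of care is to match the quantifier structure (``there exists a modulus respected by $f$'') with the existential form produced by each lemma's construction, which both lemmas already provide explicitly.
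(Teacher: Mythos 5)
Your proof is correct and matches the paper's approach exactly — the paper introduces the theorem with ``Together we obtain:'' and supplies no further proof, so the theorem is by design just the conjunction of the two preceding lemmas, and your two bullet points reconstruct that.

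One point you spotted, and handled correctly, deserves to be made explicit: the \emph{statement} of the second lemma as printed (``there exists an inverse continuity modulus $\fu$ such that every function respecting $\fu$ respects $\delta$'') gives the wrong implication for the theorem, and indeed if taken literally it would be redundant with the first lemma. The \emph{proof} of that lemma, however, actually establishes the converse — that every function respecting $\delta$ respects the constructed $\fu$ — and this is precisely what the theorem needs for its ``only if'' direction. You noticed this discrepancy, went back to the body of the proof rather than the statement, and extracted the implication that is actually proved there. That is the right move, and it is exactly how the theorem is meant to follow; the only thing I would add is to flag the lemma's statement as a slip that should read ``such that every function respecting $\delta$ respects $\fu$.''
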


Inverse continuity moduli give us (continuously) a direct answer to
the question ``how much can the value of $f$ change from $x$ to $y$?''
For example, if we attached to symbols in a signature inverse
continuity moduli, rather than usual ones, the axiom scheme
\fref{ax:uc} would take the more elegant form:

\begin{align*}
  \tag{UC$^\fu_\cL$}
  & \begin{aligned}
    & \sup_{x_{<i},y_{<n-i-1},z,w}
    d(f(\bar x,z,\bar y),f(\bar x,w,\bar y))\dotminus
    \fu_{f,i}(d(z,w))
    = 0 \\
    & \sup_{x_{<i},y_{<n-i-1},z,w}
    |P(\bar x,z,\bar y)-P(\bar x,w,\bar y)|\dotminus
    \fu_{P,i}(d(z,w))
    = 0.
  \end{aligned}
\end{align*}

Since the continuity moduli $\fu_{s,i}$ are continuous functions, they
can be admitted as connectives in the language, so the above can
indeed be viewed as sentences.
In fact, in almost all actual cases, the inverse continuity moduli can
be directly constructed from the standard connectives
$\{\lnot,\dotminus,\frac{x}{2}\}$, so there is no need to introduce new
connectives (for example, in the case of probability algebras, all the
inverse continuity moduli can be taken to be the identity).

\section{On stability inside a model}
\label{apdx:ModStab}

This appendix answers a question posed by C.\ Ward Henson to the first
author concerning stability of a formula inside a specific structure
(in contrast with stability of a formula in all models of a theory,
discussed in \fref{sec:LocStab} above).
The notion of stability inside a model appears for example in the work
of Krivine and Maurey on stable Banach spaces
\cite{Krivine-Maurey:EspacesDeBanachStables}:
a Banach space $E$ is stable in the sense of Krivine and Maurey
precisely if the formula
$\|x + y\|$ is stable in the unit ball of $E$ (viewed as a continuous
structure in an appropriate language) in the sense defined below.

\begin{dfn}
  Let $M$ be a structure, $\varphi(x,y)$ be a formula and $\varepsilon > 0$.
  We say that $\varphi$ is \emph{$\varepsilon$-stable in $M$} if there is no sequence
  $(a_ib_i\colon i < \omega)$ in $M$ such that $|\varphi(a_i,b_j) - \varphi(a_j,b_i)| \geq \varepsilon$
  for all $i < j < \omega$.
  We say that $\varphi$ is \emph{stable in $M$} if it is $\varepsilon$-stable in $M$
  for all $\varepsilon > 0$.
\end{dfn}

Note that $\varphi(x,y)$ is ($\varepsilon$-)stable in $M$ if and only if
$\tilde \varphi(y,x)$ is.
Also, $\varphi$ is \hbox{($\varepsilon$-)stable} in a theory $T$ if and only if it is
in every model of $T$.

\begin{lem}
  Assume that $\varphi(x,y)$ is $\varepsilon$-stable in $M$.
  Then for every $p \in \tS_\varphi(M)$ there exists a finite sequence
  $(c_i\colon i < n)$ in $M$ such that for all $a,b \in M$:
  \begin{align*}
    (\forall i < n) (\varphi(c_i,a) \leq \varphi(c_i,b) + \varepsilon)
    \Longrightarrow
    \varphi(x,a)^p \leq \varphi(x,b)^p + 3\varepsilon.
  \end{align*}
\end{lem}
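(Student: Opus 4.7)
The plan is to argue by contradiction: suppose no finite sequence in $M$ satisfies the conclusion; I will construct an infinite sequence in $M$ that witnesses $2\varepsilon$-instability of the auxiliary formula $\psi(x,y_1,y_2) := \varphi(x,y_1) \dotminus \varphi(x,y_2)$ (viewed, as in \fref{rmk:PsMetImg}, as a formula with one $x$-variable and a single $y$-variable ranging in a product sort), and then show this contradicts the $\varepsilon$-stability of $\varphi$ in $M$. I recursively choose $(c_n, a_n, b_n) \in M^3$ as follows. Given $c_{<n}$ and $(a_j,b_j)_{j<n}$, the failure of the implication at $(c_0,\ldots,c_{n-1})$ supplies $(a_n,b_n) \in M^2$ with $\varphi(c_i,a_n) \leq \varphi(c_i,b_n) + \varepsilon$ for every $i<n$ and $\varphi(x,a_n)^p > \varphi(x,b_n)^p + 3\varepsilon$. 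The set
$$U_n = \{q \in \tS_\varphi(M) : \varphi(x,a_j)^q > \varphi(x,b_j)^q + 3\varepsilon \text{ for all } j \leq n\}$$
is a finite intersection of open subsets of $\tS_\varphi(M)$ and contains $p$, so by density of realised $\varphi$-types in $\tS_\varphi(M)$ I can pick $c_n \in M$ with $\tp_\varphi(c_n/M) \in U_n$; in particular, $\varphi(c_n,a_j) > \varphi(c_n,b_j) + 3\varepsilon$ for every $j \leq n$.

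With these choices, for every $i < j$ one has $\psi(c_i,a_j,b_j) \leq \varepsilon$ (from the constraint on $(a_j,b_j)$ at $c_i$) and $\psi(c_j,a_i,b_i) > 3\varepsilon$ (from the choice of $c_j$), whence $|\psi(c_j,a_i,b_i) - \psi(c_i,a_j,b_j)| > 2\varepsilon$. Thus $\bigl((c_n, (a_n,b_n)) : n<\omega\bigr)$ is an infinite $2\varepsilon$-instability witness for $\psi$ in $M$.

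To close, I show that $\varepsilon$-stability of $\varphi$ in $M$ forces $2\varepsilon$-stability of $\psi$ in $M$, which yields the contradiction. Since $u \dotminus v$ is $1$-Lipschitz in each argument,
$$|\psi(c_i,a_j,b_j) - \psi(c_j,a_i,b_i)| \leq |\varphi(c_i,a_j) - \varphi(c_j,a_i)| + |\varphi(c_i,b_j) - \varphi(c_j,b_i)|,$$
so along any $2\varepsilon$-instability witness for $\psi$ in $M$ each pair $i<j$ satisfies $|\varphi(c_i,a_j) - \varphi(c_j,a_i)| \geq \varepsilon$ or $|\varphi(c_i,b_j) - \varphi(c_j,b_i)| \geq \varepsilon$. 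Two-colouring the pairs according to which holds and invoking the infinite Ramsey theorem extracts an infinite sub-sequence of $M$ along which one of these uniformly does, producing an $\varepsilon$-instability witness for $\varphi$ in $M$, contradicting the hypothesis. The main subtlety is the strict-versus-approximate distinction in the choice of $c_n$: the gap $\varphi(c_n,a_j) - \varphi(c_n,b_j) > 3\varepsilon$ must be preserved strictly (not merely approximated), which is possible because $U_n$ is open and $p \in U_n$ via the strict hypothesis $\varphi(x,a_j)^p > \varphi(x,b_j)^p + 3\varepsilon$; it is precisely this strictness that produces a $\psi$-gap $> 2\varepsilon$ and allows the Ramsey extraction to reach the sharp constant $\varepsilon$ on the $\varphi$-side.
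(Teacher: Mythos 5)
Your proof is correct and follows the paper's overall strategy --- proof by contradiction, recursive construction of $(a_n,b_n,c_n)$ in $M$ using openness of the defining conditions and density of realised $\varphi$-types, then a two-colouring and Ramsey --- but you decompose the final step differently. The paper fixes intermediate rationals $r_n < r_n+3\varepsilon < s_n$ strictly between $\varphi(x,b_n)^p$ and $\varphi(x,a_n)^p$, arranges $\varphi(c_n,a_i) > s_i$ and $\varphi(c_n,b_i) < r_i$, and colours pairs by whether $s_i > \varphi(c_i,a_j)+\varepsilon$. You instead pass through the auxiliary formula $\psi(x,y_1,y_2)=\varphi(x,y_1)\dotminus\varphi(x,y_2)$, check that the constructed sequence is a $2\varepsilon$-instability witness for $\psi$ in $M$, and then derive the contradiction from a self-contained, reusable fact: $\varepsilon$-stability of $\varphi$ in $M$ yields $2\varepsilon$-stability of any $1$-Lipschitz combination of two $\varphi$-instances, proved by the same colouring-plus-Ramsey argument. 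The dichotomy you colour by --- $|\varphi(c_i,a_j)-\varphi(c_j,a_i)|\ge\varepsilon$ versus $|\varphi(c_i,b_j)-\varphi(c_j,b_i)|\ge\varepsilon$ --- is essentially the one the paper's $r_i,s_i$ bookkeeping implements, so this buys no extra generality here, but your version avoids the auxiliary constants, makes the density appeal explicit via the open set $U_n$, and isolates the Ramsey step into a clean lemma about Lipschitz images of stable formulae. Both arguments are sound.
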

\begin{proof}
  Assume not.
  We will choose by induction on $n$ elements $a_n,b_n,c_n \in M$ and
  $r_n,s_n \in [0,1]$ as follows.
  At each step, there are by assumption $a_n,b_n \in M$ such that
  $\varphi(c_i,a_n) \leq \varphi(c_i,b_n) + \varepsilon$ for all $i < n$, and yet
  $\varphi(x,a_n)^p > \varphi(x,b_n)^p + 3\varepsilon$.
  Choose $r_n,s_n$ such that
  $\varphi(x,b_n)^p < r_n < r_n + 3\varepsilon < s_n < \varphi(x,a_n)^p$.
  Once these choices are made we have $\varphi(x,a_i)^p > s_i$ and
  $\varphi(x,b_i)^p < r_i$ for all $i \leq n$, and we may therefore find
  $c_n \in M$ such that $\varphi(c_n,a_i) > s_i$ and
  $\varphi(c_n,b_i) < r_i$ for all $i \leq n$.

  Once the construction is complete, for every $i < j < \omega$
  colour the pair $\{i,j\}$ as follows: if $s_i > \varphi(c_i,a_j) + \varepsilon$,
  colour the pair $\{i,j\}$ yellowish maroon; otherwise, colour it
  fluorescent pink.
  Notice that if $\{i,j\}$ is fluorescent pink then
  $\varphi(c_i,b_j) - \varepsilon > r_i$.
  By Ramsey's Theorem there is an infinite monochromatic subset
  $I \subseteq \omega$, and without loss of generality $I = \omega$.
  If all pairs are fluorescent pink then we have for all $i < j < \omega$:
  $\varphi(c_j,a_i) - \varphi(c_i,a_j) > s_i - (s_i-\varepsilon) = \varepsilon$.
  If all are yellowish maroon we get
  $\varphi(c_i,b_j) - \varphi(c_j,b_i) > (r_i+\varepsilon)-r_i = \varepsilon$.
  Either way, we get a contradiction to $\varepsilon$-stability in $M$.
\end{proof}

\begin{lem}
  Assume that $\varphi(x,y)$ is $\varepsilon$-stable in $M$.
  Then for every $p \in \tS_\varphi(M)$ there exists a finite sequence
  $(c_i\colon i < n)$ in $M$ and a continuous increasing function
  $f\colon [0,1]^n \to [0,1]$, such that for all $a \in M$:
  \begin{align*}
    |\varphi(x,a)^p - f(\varphi(c_i,a)\colon i < n)| \leq 3\varepsilon.
  \end{align*}
\end{lem}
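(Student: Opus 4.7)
Apply the preceding lemma to obtain a finite sequence $(c_i : i < n)$ in $M$ such that for all $a, b \in M$, if $\varphi(c_i,a) \leq \varphi(c_i,b) + \varepsilon$ for every $i<n$, then $\varphi(x,a)^p \leq \varphi(x,b)^p + 3\varepsilon$. The idea is to define an $f$ that, when evaluated at a profile $\bar t = (\varphi(c_i,a_0))_{i<n}$, picks out (approximately) the supremum of $\varphi(x,a)^p$ over all $a\in M$ whose profile lies pointwise below $\bar t$, made continuous and increasing via a soft Lipschitz penalization.

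Concretely, define $f\colon [0,1]^n\to[0,1]$ by
\[
f(\bar t) = \sup_{a\in M}\Bigl( \varphi(x,a)^p \dotminus \tfrac{1}{\varepsilon}\max_{i<n}\bigl(\varphi(c_i,a) - t_i\bigr)_+\Bigr).
\]
Each term inside the supremum is increasing in each coordinate of $\bar t$ and $(1/\varepsilon)$-Lipschitz in $\bar t$ (as a composition of min/max and translations), hence $f$ itself is increasing and Lipschitz on $[0,1]^n$, and clearly takes values in $[0,1]$ since we used $\dotminus$.

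Finally, fix $a_0\in M$ and set $\bar t = (\varphi(c_i,a_0))_{i<n}$. Plugging in $a = a_0$ gives $f(\bar t)\geq \varphi(x,a_0)^p$. For the reverse inequality, note that any $a \in M$ whose term contributes a positive value must satisfy $\max_i(\varphi(c_i,a) - t_i)_+ \leq \varepsilon$, i.e.\ $\varphi(c_i,a) \leq \varphi(c_i,a_0) + \varepsilon$ for all $i<n$; the previous lemma then forces $\varphi(x,a)^p \leq \varphi(x,a_0)^p + 3\varepsilon$, and so the term itself is $\leq \varphi(x,a_0)^p + 3\varepsilon$. Taking the supremum yields $f(\bar t) \leq \varphi(x,a_0)^p + 3\varepsilon$, hence $|f(\bar t) - \varphi(x,a_0)^p|\leq 3\varepsilon$ as required.

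There is no real obstacle here beyond choosing a smoothing of the ``crude'' definition $\sup\{\varphi(x,a)^p : \varphi(c_i,a)\leq t_i\ \forall i\}$, which is automatically monotone but not continuous; the $(1/\varepsilon)$-Lipschitz penalty both enforces continuity and cuts off exactly those $a$ for which the previous lemma's hypothesis fails, so the $3\varepsilon$ slack remains just enough.
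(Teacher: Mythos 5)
Your proof is correct and follows the same strategy as the paper's: both start from the previous lemma's $(c_i\colon i<n)$, consider the crude supremum over all $a$ whose profile lies below $\bar t$ (monotone but not continuous), and then replace it by a soft, Lipschitz-damped version that vanishes once any coordinate $\varphi(c_i,a)$ exceeds $t_i$ by $\varepsilon$. The only cosmetic difference is in the damping mechanism: the paper multiplies $\varphi(x,a)^p$ by a tent-shaped weight in $[0,1]$ (via the auxiliary $h_{\bar u}$), whereas you subtract a $\frac{1}{\varepsilon}$-Lipschitz penalty using $\dotminus$; after unwinding the paper's $\sup_{\bar u}$ these two constructions implement the same soft cutoff and yield the same $3\varepsilon$ bound.
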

\begin{proof}
  Let $(c_i\colon i < n)$ be chosen as in the previous Lemma.
  As a first approximation, let:
  \begin{align*}
    f(\bar u) = \sup \{ \varphi(x,a)^p\colon a \in M \text{ and } \varphi(c_i,a) \leq u_i
    \text{ for all } i < n\}.
  \end{align*}
  This function is increasing, but not necessarily continuous.
  We define a family of auxiliary functions
  $h_{\bar u}\colon [0,1]^n \to [0,1]$ for $\bar u \in [0,1]^n$:
  \begin{gather*}
    h_{\bar u}(\bar v) = \frac{1}{\varepsilon}\bigwedge_{i<n} (((v_i + \varepsilon) \dotminus u_i) \land \varepsilon).
  \end{gather*}
  In other words, $h_{\bar u}(\bar v)$ is a linear-by-pieces function
  which is equal to $1$ if $v_i \geq u_i$ for all $i$, to $0$ if
  $v_i \leq u_i - \varepsilon$ for some $i$, and is linear in between.
  We now define:
  \begin{gather*}
    g(\bar v) = \sup_{\bar u \in [0,1]^n} h_{\bar u}(\bar v)f(\bar u).
  \end{gather*}
  Since all the functions $h_{\bar u}$ are are equally continuous, $g$
  is continuous.
  It is clearly increasing, and also satisfies for all
  $\bar v \in [0,1]$:
  \begin{gather*}
    f(\bar v) \leq g(\bar v) \leq f(\bar v \dotplus \varepsilon).
  \end{gather*}
  Indeed, the first inequality is clear from the definition, and the
  second follows from the fact that $f$ is increasing, and every tuple
  $\bar u$ such that $f(\bar u)$ contributes to $g(\bar v)$ must be
  smaller in every coordinate than $\bar v \dotplus \varepsilon$.

  Now let $a \in M$ and $v_i = \varphi(c_i,a)$ for $i < n$.
  Then by choice of $\bar c$:
  \begin{align*}
    g(\bar v)
    & \leq f(\bar v + \varepsilon) \\
    & = \sup \{ \varphi(x,b)^p\colon b \in M \text{ and } \varphi(c_i,b) \leq \varphi(c_i,a) + \varepsilon
    \text{ for all } i < n\} \\
    & \leq \varphi(x,a)^p + 3\varepsilon,
    \intertext{and}
    g(\bar v)
    & \geq f(\bar v) \\
    & =  \sup \{ \varphi(x,b)^p\colon b \in M \text{ and } \varphi(c_i,b) \leq \varphi(c_i,a)
    \text{ for all } i < n\} \\
    & \geq \varphi(x,a)^p.
  \end{align*}

  Therefore $|g(\varphi(c_i,a)\colon i < n) - \varphi(x,a)^p| \leq 3\varepsilon$.
\end{proof}

\begin{thm}
  Assume $\varphi$ is stable in $M$.
  Then every $p \in \tS_\varphi(M)$ is definable.
  Moreover, for every such $p$ there is a sequence
  $(c_i\colon i < \omega)$ and a continuous increasing function
  $f\colon [0,1]^\omega > [0,1]$ such that $d_p\varphi(y) = f \circ (\varphi(c_i,y)\colon i < \omega)$.
\end{thm}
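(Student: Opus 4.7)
The plan is to package the two preceding lemmas into the stated form, with the only real work being the construction of a single continuous increasing function on $[0,1]^\omega$. Applying the second lemma with $\varepsilon_n = 2^{-n-3}$, we obtain for each $n < \omega$ a finite tuple $\bar c^n = (c^n_i \colon i < k_n)$ in $M$ and a continuous increasing $f_n \colon [0,1]^{k_n} \to [0,1]$ such that, for every $a \in M$,
\begin{gather*}
  \left| f_n(\varphi(c^n_i,a)\colon i < k_n) - \varphi(x,a)^p \right|
  \leq 3\varepsilon_n \leq 2^{-n-1}.
\end{gather*}

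First I would concatenate the tuples $\bar c^n$ into a single sequence $C = (c_i \colon i < \omega)$ in $M$, and lift each $f_n$ via coordinate projection to a continuous increasing $\tilde f_n \colon [0,1]^\omega \to [0,1]$. Writing $\Phi \colon a \mapsto (\varphi(c_i,a) \colon i < \omega)$, the displayed estimate becomes $|\tilde f_n \circ \Phi(a) - \varphi(x,a)^p| \leq 2^{-n-1}$, uniformly in $a \in M$. Then define $f \colon [0,1]^\omega \to [0,1]$ by $f(\bar v) = \flim_n \tilde f_n(\bar v)$.

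Continuity of $f$ is immediate from continuity of $\flim$ (\fref{lem:FLim}) and of each $\tilde f_n$. The crucial monotonicity of $f$ reduces to monotonicity of $\flim$ itself, which is the one ingredient not spelled out earlier and is the main obstacle: the recursive step in the definition of $a_{\flim,n+1}$ picks the median of the three numbers $a_{\flim,n}-2^{-n-1}$, $a_{n+1}$, $a_{\flim,n}+2^{-n-1}$, and the median is weakly increasing in each of its entries; induction on $n$ then yields $a_n \leq b_n$ for all $n \Longrightarrow \flim a_n \leq \flim b_n$. Combined with monotonicity of each $\tilde f_n$, this makes $f$ coordinate-wise weakly increasing.

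Finally, the bound $|\tilde f_n \circ \Phi(a) - \varphi(x,a)^p| \leq 2^{-n}$ together with \fref{lem:FLim} gives $f \circ \Phi(a) = \flim_n \tilde f_n \circ \Phi(a) = \varphi(x,a)^p$ for every $a \in M$, which is the moreover. For the first assertion, observe that each map $a \mapsto \tilde f_n \circ \Phi(a)$ is a $\tilde \varphi$-predicate over $M$ (a continuous combination of finitely many instances $\varphi(c,a)$ with $c \in M$), and that these converge uniformly on $M$ to $a \mapsto \varphi(x,a)^p$. Hence by \fref{lem:DefPredInStr} the limit is an $M$-definable $\tilde \varphi$-predicate, namely the sought-after definition $d_p\varphi$, so $p$ is definable.
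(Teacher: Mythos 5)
Your proof is correct and follows essentially the same route as the paper: apply the preceding lemma with $\varepsilon$ decaying geometrically (the paper uses $2^{-m-17}$, you use $2^{-n-3}$; either suffices for the forced limit to equal the actual limit), concatenate the parameter tuples, lift the $f_n$, and form $f = \flim \tilde f_n$. The one place you go beyond the paper's proof is in actually justifying monotonicity of $\flim$ via the observation that each recursive step is a median, which the paper asserts without proof; that is a genuine and correct addition.
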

\begin{proof}
  For all $m < \omega$ choose a sequence $(c_{m,i}\colon i < n_m)$ and function
  $f_m\colon [0,1]^{n_m} \to [0,1]$ as in the previous Lemma corresponding
  to $\varepsilon = 2^{-m-17}$.
  Let:
  \begin{align*}
    N_m & = \sum_{k < m} n_k, \\
    d_{N_m+i} & = c_{m,i} && i < n_m, \\
    f(u_{<\omega}) & = \flim_m f_m(u_{N_m},\ldots,u_{N_{m+1}-1}).
  \end{align*}
  Since each $f_m$ is increasing and continuous, as is
  $\flim\colon [0,1]^\omega \to[0,1]$, we have that $f$ is increasing and
  continuous.
  Also, by the choice of the parameters we have for all $a \in M$:
  \begin{align*}
    f(\varphi(d_i,a)\colon i < \omega)
    & = \flim_m f_m (\varphi(c_i,a)\colon i < n_m) = \varphi(x,a)^p.
    \qedhere
  \end{align*}
\end{proof}

Notice that we get almost the same result as for a formula which is
stable in a theory: the definition is still a limit of positive (i.e.,
increasing) continuous combinations of instances of $\varphi$ with
parameters in $M$.
However, these combinations are not necessarily the
particularly elegant median value as in \fref{sec:LocStab}.

\bibliographystyle{amsalpha}
\bibliography{mybib}

\end{document}